\def\BibLaTeXCommands{\nocite{*}}\def\loadTIKZ{\usepackage{tikz}\usetikzlibrary{matrix,arrows,shapes.arrows,chains,decorations.pathmorphing,decorations.pathreplacing,decorations.text,calc,trees,intersections,cd}}\newcommand\DocumentClassOptions{article,12pt,oneside} \newcommand\GeometryOPTIONS{hcentering=true,textheight=220mm,textwidth=150mm}\ifdefined\headpresent\else \providecommand\DocumentClassOptions{article}
\ifdefined\presentationON \documentclass[\DocumentClassOptions,20pt]{amsart}\usepackage{extsizes} \else \documentclass[\DocumentClassOptions]{amsart}\fi 
\usepackage{ifpdf}\ifdefined\BibLaTeX \usepackage[\BibLaTeX]{biblatex}\DeclareFieldFormat{labelalpha}{\thefield{entrykey}}\makeatletter\protected\def\abx@missing#1{\mbox{\reset@font\color{red}#1??}}\makeatother \DeclareFieldFormat{title}{\textit{#1}}\DeclareFieldFormat{year}{\textbf{#1}}\renewbibmacro*{date}{\printtext[year]{\printdate}}\BibLaTeXCommands \else\fi \newcommand\DIRroot{../../../../../../../../../../../}\overfullrule=5pt \newcommand\hfuzzReset{\hfuzz=3pt}\hfuzzReset \newcommand\toleranceReset{\tolerance=1400}\toleranceReset \newcommand\emergencystretchReset{\emergencystretch=1ex}\emergencystretchReset \hbadness=10000 \usepackage{xifthen}\usepackage{forarray}\usepackage{xstring}\usepackage{stringstrings}\def\StackCreate#1#2#3{\expandafter\def\csname#1\endcsname{#3}\expandafter\def\csname#1Push\endcsname##1{\expandafter\edef\csname#1\endcsname{##1#2\csname#1\endcsname}}\expandafter\def\csname TopAux#1\endcsname ##1#2##2#3{##1}\expandafter\def\csname#1Top\endcsname{\expandafter\expandafter\expandafter\expandafter\expandafter\expandafter\csname TopAux#1\endcsname\csname#1\endcsname}\expandafter\def\csname PopAux#1\endcsname ##1#2##2#3##3#2{\expandafter\def\csname##3\endcsname{##2#3}}\expandafter\def\csname#1Pop\endcsname{\expandafter\expandafter\expandafter\expandafter\expandafter\expandafter\csname PopAux#1\endcsname\csname#1\endcsname#1#2}}\def\GetAfterColonAux#1:#2;{#2}\def\GetAfterColon#1{\IfBeginWith{#1}{:}{\GetAfterColonAux#1;}{#1}}\usepackage{aliascnt}\newlength{\tempwidth}\newcommand{\fillX}[2][]{\settowidth{\tempwidth}{#2}\def\temp{#1}\ifx\temp\empty\else\addtolength{\tempwidth}{#1}\fi\leavevmode\cleaders\hbox to \tempwidth{\hss #2\hss }\hfill\kern0pt }\usepackage[shortlabels,inline]{enumitem}\setenumerate[1]{leftmargin=5.5ex}\setitemize[1]{leftmargin=5.5ex}\SetEnumitemKey{noindent}{leftmargin=0ex, itemindent=5ex, align=right, itemsep=1ex }\newcounter{ManualLabel}\makeatletter \newcommand\itemPatch[1][]{\item[\theenumi#1]\refstepcounter{ManualLabel}\def\@currentlabel{\theenumi#1}}\newcommand\itemDescribe[1][]{\item[#1]\refstepcounter{ManualLabel}\def\@currentlabel{#1}}\makeatother \ifdefined\presentationON \usepackage[hcentering=true,textheight=240mm,textwidth=210mm]{geometry} \else \ifdefined\GeometryOPTIONS \usepackage[\GeometryOPTIONS]{geometry}\fi \fi \ifdefined\selectPages \usepackage[\selectPages]{pagesel} \fi \usepackage{everypage-1x}\ifdefined\AddEverypageHook \newcommand\AddPrivateToMargin[1]{\AddEverypageHook{\tikz[overlay,remember picture]{\node at ($(current page.west)+(1.5,0)$) [rotate=90] {\textcolor{orange}{\vbox{\hrule width \the\textwidth height 0.5pt} \textcolor{defaultcolor}{#1}\ \vbox{\hrule width 40em height 0.5pt}}}; }}}\newcommand\AddLongversionToMargin[1]{\AddEverypageHook{\tikz[overlay,remember picture]{\node at ($(current page.west)+(2,0)$) [rotate=90] {\textcolor{\LongColor}{\vbox{\hrule width \the\textwidth height 0.5pt} \textcolor{defaultcolor}{#1}\ \vbox{\hrule width 40em height 0.5pt}}}; }}}\newcommand\AddOldversionToMargin[1]{\AddEverypageHook{\tikz[overlay,remember picture]{\node at ($(current page.west)+(2.5,0)$) [rotate=90] {\textcolor{\OldColor}{\vbox{\hrule width \the\textwidth height 0.5pt} \textcolor{defaultcolor}{#1}\ \vbox{\hrule width 40em height 0.5pt}}}; }}}\newcommand\AddLineToMargin[3]{\AddEverypageHook{\tikz[overlay,remember picture]{\node at ($(current page.west)+(#2,0)$) [rotate=90] {\textcolor{#1}{\vbox{\hrule width \the\textwidth height 0.5pt} \textcolor{defaultcolor}{#3}\ \vbox{\hrule width 40em height 0.5pt}}}; }}}\else\fi \providecommand\AddPrivateToMargin[1]{\AddToHook {shipout/background}{\tikz[overlay,remember picture]{\node at ($(current page.west)+(1.5,0)$) [rotate=90] {\textcolor{orange}{\vbox{\hrule width \the\textwidth height 0.5pt} \textcolor{black}{#1}\ \vbox{\hrule width 40em height 0.5pt}}}; }}}\providecommand\AddLongversionToMargin[1]{\AddToHook {shipout/background}{\tikz[overlay,remember picture]{\node at ($(current page.west)+(2,0)$) [rotate=90] {\textcolor{\LongColor}{\vbox{\hrule width \the\textwidth height 0.5pt} \textcolor{black}{#1}\ \vbox{\hrule width 40em height 0.5pt}}}; }}}\providecommand\AddOldversionToMargin[1]{\AddToHook {shipout/background}{\tikz[overlay,remember picture]{\node at ($(current page.west)+(2.5,0)$) [rotate=90] {\textcolor{\OldColor}{\vbox{\hrule width \the\textwidth height 0.5pt} \textcolor{black}{#1}\ \vbox{\hrule width 40em height 0.5pt}}}; }}}\providecommand\AddLineToMargin[3]{\AddToHook {shipout/background}{\tikz[overlay,remember picture]{\node at ($(current page.west)+(#2,0)$) [rotate=90] {\textcolor{#1}{\vbox{\hrule width \the\textwidth height 0.5pt} \textcolor{black}{#3}\ \vbox{\hrule width 40em height 0.5pt}}}; }}}\let\oldtocsection=\tocsection \let\oldtocsubsection=\tocsubsection \let\oldtocsubsubsection=\tocsubsubsection \renewcommand{\tocsection}[2]{\hspace{0em}\vspace*{0.1em}\oldtocsection{#1}{#2}}\renewcommand{\tocsubsection}[2]{\hspace{4ex}\oldtocsubsection{#1}{#2}}\renewcommand{\tocsubsubsection}[2]{\hspace{6ex}\oldtocsubsubsection{#1}{#2}}\usepackage{ulem}\usepackage{fancybox}\ifpdf \usepackage[pdftex]{lscape}\else \usepackage{lscape}\fi \makeatletter \newcommand{\verbatimfont}[1]{\def\verbatim@font{#1}}\makeatother \IfFileExists{mathabx.sty}{}{}\usepackage{amsfonts}\usepackage{amssymb}\usepackage{stmaryrd}\usepackage{amsmath}\usepackage{amsthm}\usepackage{dsfont}\usepackage{mathrsfs}\usepackage{twcal}\usepackage{accents}\usepackage[T1]{fontenc}\usepackage[latin1]{inputenc}\catcode`\=13 \def{+}\newcommand\assigncharacter[1]{\expandafter\newcommand\csname #1\endcsname{\mathds{#1}}}\FunctionForEach{,}{\assigncharacter}{A,B,C,D,E,F,G,I,J,K,M,N,Q,R,T,U,V,W,X,Y,Z}\renewcommand\assigncharacter[1]{\expandafter\newcommand\csname C#1\endcsname{\mathcal{#1}}}\FunctionForEach{,}{\assigncharacter}{A,B,C,D,E,F,G,H,I,J,K,L,M,N,O,P,Q,R,S,T,U,V,W,X,Y,Z}\renewcommand\assigncharacter[1]{\expandafter\newcommand\csname D#1\endcsname{\mathfrak{#1}}}\FunctionForEach{,}{\assigncharacter}{a,b,c,d,e,f,g,h,i,j,k,l,m,n,o,p,q,r,s,t,u,v,w,x,y,z,A,B,C,D,E,F,G,I,K,L,M,N,O,P,Q,R,S,T,U,V,W,X,Y,Z} \renewcommand\assigncharacter[1]{\expandafter\newcommand\csname S#1\endcsname{\mathscr{#1}}}\FunctionForEach{,}{\assigncharacter}{A,B,C,D,E,F,G,H,I,J,K,L,M,N,O,P,Q,R,T,U,V,W,X,Y,Z}\def\NewFont#1#2#3#4#5{\expandafter\font\csname #1display\endcsname =#1 at #2 \expandafter\font\csname #1normal\endcsname =#1 at #3 \expandafter\font\csname #1script\endcsname =#1 at #4 \expandafter\font\csname #1scriptscript\endcsname =#1 at #5 }\def\NewFontLetter#1#2{{\mathchoice {{\expandafter\hbox{\csname #1display\endcsname\char"#2}}}{{\expandafter\hbox{\csname #1normal\endcsname\char"#2}}}{{\expandafter\hbox{\csname #1script\endcsname\char"#2}}}{{\expandafter\hbox{\csname #1scriptscript\endcsname\char"#2}}}}}\NewFont{pxsyc}{9.00pt}{8.00pt}{7.00pt}{6.00pt}\NewFont{pxsya}{9.00pt}{8.00pt}{7.00pt}{6.00pt}\renewcommand{\rightsquigarrow}{\NewFontLetter{pxsya}{20}}\NewFont{p1xr}{10.00pt}{9.00pt}{8.00pt}{7.00pt}\NewFont{MnSymbolA5}{10.00pt}{9.00pt}{8.00pt}{7.00pt}\NewFont{MnSymbolC10}{10.00pt}{9.00pt}{8.00pt}{7.00pt}\NewFont{MnSymbolD10}{12.00pt}{11.00pt}{10.00pt}{9.00pt}\NewFont{MnSymbolF10}{12.00pt}{11.00pt}{10.00pt}{9.00pt}\renewcommand{\bigcap}{\mathop{\NewFontLetter{MnSymbolF10}{1D}}}\def\IndependenceX#1#2{#1\setbox0=\hbox{$#1x$}\kern\wd0\hbox to 0pt{\hss$#1\mid$\hss}\lower.9\ht0\hbox to 0pt{\hss$#1\smile$\hss}\kern\wd0}\def\nIndependenceX#1#2{#1\setbox0=\hbox{$#1x$}\kern\wd0 \hbox to 0pt{\mathchardef\nn=12854\hss$#1\nn$\kern1.4\wd0\hss}\hbox to 0pt{\hss$#1\mid$\hss}\lower.9\ht0 \hbox to 0pt{\hss$#1\smile$\hss}\kern\wd0}\NewFont{manfnt}{12.00pt}{11.00pt}{10.00pt}{9.00pt}\NewFont{favmr7y}{12.00pt}{11.00pt}{10.00pt}{9.00pt}\ifpdf \usepackage[pdftex,usenames,x11names]{xcolor}\else \usepackage[dvips,usenames,x11names]{xcolor}\fi \StackCreate{ColoR}{;}{?}\AtBeginDocument{\colorlet{defaultcolor}{.}\ColoRPush{defaultcolor}}\definecolor{Green}{rgb}{0.00,0.50,0.00}\definecolor{DarkGreen}{rgb}{0.00,0.40,0.00}\definecolor{gray}{rgb}{0.40,0.40,0.40} \renewcommand\textcolor[2]{\ColoRPush{#1}\color{\ColoRTop}#2\ColoRPop\color{\ColoRTop}}\usepackage[pdftex]{graphicx}\usepackage[all]{xy}\ifdefined\loadTIKZ \loadTIKZ \def\TIKZlabel#1{}\else \usepackage{tikz}\usetikzlibrary{matrix,arrows,calc,cd,decorations.pathmorphing,intersections}\fi \newcommand {\notion}[2][]{\def\temp{#1}\ifmmode #2 \ifx \temp\empty \index{$#2$}\else \index{$#1$}\fi \else {\bf #2}\ifx \temp\empty \index{#2}\else \index{#1}\fi \fi }\newcommand\NOPAGENUMBER[1]{}\usepackage{xr-hyper}\newcommand{\refX}[2]{\IfBeginWith{#1}{:}{\ref{\GetAfterColonAux#1;-#2}}{\cite[\ref{#1-#2}]{#1}}}\providecommand\hyperrefBOOKMARKS{true}\providecommand\hyperrefOPTIONS{destlabel=false}\ifdefined\BibLaTeX \usepackage[\hyperrefOPTIONS,pdftex,linktocpage,breaklinks,bookmarks=\hyperrefBOOKMARKS]{hyperref}\else \usepackage[\hyperrefOPTIONS,pdftex,linktocpage,pagebackref,breaklinks,bookmarks=\hyperrefBOOKMARKS]{hyperref}\fi \hypersetup{bookmarksdepth=3, colorlinks=true,allcolors=Green, linkcolor=DarkGreen, citecolor=violet, urlcolor=blue, runcolor=red, filecolor=magenta }\providecommand\url[1]{}\providecommand\nolinkurl[1]{}\providecommand\href[3][]{}\providecommand\hyperlink[2]{}\providecommand\hypertarget[2]{}\providecommand\hyperdef[3]{}\providecommand\hyperref[2][]{} \providecommand\hypersetup[1]{}\providecommand\pdfbookmark[3][]{}\providecommand\currentpdfbookmark[2]{}\providecommand\belowpdfbookmark[2]{}\providecommand\texorpdfstring[2]{} \newcommand\refDefined[3]{\ifcsname r@#1\endcsname #2 \else #3 \fi }\def\UndefinedRef#1{\large\bfseries\color{red} ??#1??}\def\UndefinedCite#1{\large\bfseries\color{magenta} ??#1??}\makeatletter \def\@setref#1#2#3{\ifx#1\relax \protect\G@refundefinedtrue \nfss@text{\reset@font\UndefinedRef{#3}}\@latex@warning{Reference `#3' on page \thepage \space undefined }\else \expandafter\Hy@setref@link#1\@empty\@empty\@empty\@nil{#2}\fi }\ifdefined\BibLaTeX \def\abx@missing@entry#1{\UndefinedCite{#1}}\else \def\@citex[#1]#2{\let\@citea\@empty \@cite{\@for\@citeb:=#2\do{\@citea \def\@citea{,\penalty\@m\ }\edef\@citeb{\expandafter\@firstofone\@citeb}\if@filesw \immediate\write\@auxout{\string\citation{\@citeb}}\fi \@ifundefined{b@\@citeb\@extra@b@citeb}{\mbox{\reset@font\UndefinedCite{\@citeb}}\G@refundefinedtrue \@latex@warning{Citation `\@citeb' on page \thepage \space undefined }}{\hbox{\csname b@\@citeb\@extra@b@citeb\endcsname}}}}{#1}}\fi \makeatother \fi \newcommand\pr{\begin{proof}}\def\ende{\end{proof}}\newtheoremstyle{LayoutVoid}{1ex}{0ex}{\normalfont}{}{\bf}{.}{1ex}{}\newcommand\stressstatement[1]{#1}\theoremstyle{plain}\swapnumbers \newcommand\maketheorem[1]{\newtheorem{#1}[theorem]{\stressstatement{#1}} \newtheorem{#1Definition}[theorem]{\stressstatement{#1 and Definition}}  }\FunctionForEach{,}{\maketheorem}{Conclusion,Conjecture,Corollary,Fact,Facts,Lemma,Observation,Observations,Proposition,Reminder,Scholium,Summary,Theorem}\theoremstyle{definition}\theoremstyle{remark}\FunctionForEach{,}{\maketheorem}{Convention,Counterexample,Counterexamples,Discussion,Example,Examples, Exercise,Exercises,Explanation,Notation,Project,Projects,Question,Questions,Remark,Remarks,Strategy,Warning}\theoremstyle{LayoutVoid}\numberwithin{equation}{section}\newcommand{\labelon}[1]{\marginpar{#1}}\newcommand{\labelx}[1]{{\def\temp{#1}\ifx\temp\empty\else \label{#1}\labelon{#1}\fi}}\def\GetAfterColon#1:#2;;{#2}\def\GetAfterPlus#1+#2;;{#2}\newenvironment{FACT}[2]{\edef\currentlabel{#2}\IfBeginWith{#1}{:}{\def\tempFactName{void}\def\tempFreeTitle{\GetAfterColon#1;;\ }}{\IfBeginWith{#1}{+}{\def\tempFactName{voidTheorem}\def\tempFreeTitle{\GetAfterPlus#1;;\ }}{\def\tempFactName{#1}\def\tempFreeTitle{}}}\def\tempfacT{\end{\tempFactName}}\begin{\tempFactName}\labelx{#2}\textup{\textbf{\tempFreeTitle}}\capitalize[q]{#1}\caselower[q]{#1}}{\tempfacT}\newcommand{\inv}{\mathrm{inv}}\newcommand{\con}{\mathrm{con}}\catcode95=12 \catcode95=8 \newcommand\bdl{{\ifmmode \mathrm{bdlat}\else {bounded distributive lattice}\fi}}\let\disjoint\cupdot \newcommand{\st}{{\ \vert \ }}\let\temp\phi \let\phi\varphi \let \varphi\temp \let\temp\theta \let\theta\vartheta \let \vartheta\temp \let\eps\varepsilon  \let\0\emptyset \newcommand{\into}{\hookrightarrow}\makeatletter \newcommand{\xRightarrow}[2][]{\ext@arrow 0359\Rightarrowfill@{#1}{#2}}\newcommand{\xLeftarrow}[2][]{\ext@arrow 0359\Leftarrowfill@{#1}{#2}}\newcommand{\xonto}[2][]{\ext@arrow 0359\rightarrowfill@ {#1}{#2}\mathrel{\mspace{-15mu}}\rightarrow}\newcommand{\xinto}[2][]{\lhook\joinrel\ext@arrow 0359\rightarrowfill@ {#1}{#2}}\makeatother \newcommand{\lra}{\longrightarrow}\newcommand{\Ra}{\Rightarrow}\newcommand{\spez}{\,\rightsquigarrow\,}\newcommand{\mal}{{\cdot}}\newcommand\qc{quasi-compact} \newcommand{\addressTressl}{The University of Manchester, Department of Mathematics, Oxford Road, Manchester M13 9PL, UK}\newcommand{\emailTressl}{marcus.tressl@manchester.ac.uk}\newcommand{\homepageTressl}{\url{http://personalpages.manchester.ac.uk/staff/Marcus.Tressl/}}\newcommand{\monthname}[1]{\ifcase#1 \or January \or February \or March \or April \or May \or June \or July \or August \or September \or October \or November \or December \fi}\newcommand\LongColor{teal}\newcommand\OldColor{gray}\newcommand\COL{\ifmmode\colon\else :\ \fi}\newcommand\kat[1]{{\tt #1}} \newcommand\operator[1]{\mathop{\operatorname{#1}}\nolimits}\newcommand\kon{{\mathcal{K}}}\newcommand\qcop{\mathop {\mathring{\kon}}\nolimits}\newcommand\qccl{\mathop {\overline {\kon }}\nolimits} \newcommand{\id}{\operator{id}}\newcommand{\powerset}{\operator{\DP}}\newcommand{\Spec}{\operator{Spec}}\newcommand{\zSpec}{\operator{z\text{-}Spec}}\newcommand{\Spez}{\operator{Spez}}\newcommand{\PrimI}{\operator{PrimI}}\newcommand{\Gen}{\operator{Gen}}\newcommand{\interior}{\operator{int}}\newcommand{\Ann}{\operator{Ann}}\newcommand{\Coz}{\operator{Coz}}\IfFileExists{C:/wb/System64/WinBatch.exe}{}{}\ifdefined\isinput\endinput\else\fi \usepackage{todonotes}\def\UndefinedCite#1{\large\bfseries\color{blue} ??#1??}\IfFileExists{DST.aux}{\externaldocument[DST-]{DST}\hypersetup{bookmarksdepth=3, colorlinks=true,allcolors=Green, linkcolor=DarkGreen, citecolor=violet, urlcolor=blue, runcolor=red, filecolor=black }}{\externaldocument[DST-]{/TEX/math/SpectralSpaces/DST}}\renewcommand{\labelon}[1]{}\newif\ifprivate\privatefalse \newif\iflongversion\longversionfalse \renewcommand\LongColor{teal}\newif\ifoldversion\oldversionfalse \ifprivate \AddPrivateToMargin{private version} \fi \iflongversion \AddLongversionToMargin{long version} \fi \ifoldversion \AddOldversionToMargin{old version included} \fi \newcommand\DST[2][]{\refDefined{DST-#2}{\cite[{\ref{DST-#2}#1}]{DiScTr2019}}{\ref{#2}}}\newcommand\GilJer[2]{\cite[{{\href[page=#1+9]{\DIRroot../ARTICLE/RAAG/C(X)/Gillman,Jerison - Rings of Continuous Functions.pdf}{#2}}}]{GilJer1960}}
\begin{document} \title[]{Pseudocomplementation in rings of continuous functions} \author{Guram Bezhanishvili} \address{Department of Mathematical Sciences, New Mexico State University, NM, USA\newline Homepage: \url{https://sites.google.com/view/guram-bezhanishvili/}} \email{guram@nmsu.edu} \author{Marcus Tressl} \address{\addressTressl\newline Homepage: \homepageTressl} \email{\emailTressl} \date{\today} \subjclass[2020]{Primary: 54C30, 06D15 Secondary: 54G05, 54G10, 18F70} \keywords{Ring of (real-valued) continuous functions; basically disconnected space; P-space; metrizable space; pseudocomplemented distributive lattice; Stone algebra; Heyting algebra; duality theory} \begin{abstract} We study rings of real-valued continuous functions in terms of pseudocomplementation conditions on various lattices attached to their prime spectrum. We fully characterize pseudocomplementation in all cases and have an almost complete characterization of relative pseudocomplementation. \end{abstract} \maketitle \tableofcontents \section{Introduction} \noindent The ring $C(T)$ of continuous real-valued functions on a topological space $T$ is one of the most studied objects in topology \cite{GilJer1960}. It is a classic result of Gel'fand and Kolmogorov \cite[Theorem~IV']{GelKol1939} based on earlier work of Stone \cite{Stone1937} that the Stone-\v{C}ech compactification of a completely regular space $X$ can be described as the maximal spectrum of $C(T)$ and of its subring $C^*(T)$ of bounded functions. This has generated a comprehensive study of maximal spectra of the rings $C(T)$, as well as their minimal spectra, see for instance \cite{GilJer1960,HenWoo2004}. However, the complicated structure of full prime spectra $\Spec C(T)$ and $\Spec C^*(T)$ (as for instance in \cite{Schwar1997,Tressl2007}) is investigated to a lesser degree. \par We are contributing to this study in the following way. By Stone duality, prime spectra of rings are equivalently described by the lattice of their compact and open subsets. We aim to provide characterizations of variants of pseudocomplementation properties of these lattices and their order duals in terms of algebraic properties of $C(T)$ and topological properties of $\Spec C(T)$. For instance, we have complete characterizations for pseudocomplementation and an almost complete characterization of relative pseudocomplementation. In order to carry these out, we recall and add to the knowledge of pseudocomplementation in lattice theory. This is done in sections \ref{sectionPC} and \ref{sectionTreeRoot}, which we describe now. \par \smallskip The study of pseudocomplementation and relative pseudocomplementation has been a mainstream topic in lattice theory \cite{BalDwi1974,Gratze2011}, which has important consequences in algebraic logic as such structures serve as algebraic models of various non-classical logics \cite{RasSik1963,Rasiow1974}. In particular, there is a well-developed duality theory for such structures, which utilizes Stone duality \cite{Stone1937} and Priestley duality \cite{Priest1970} for bounded distributive lattices, the two being two sides of the same coin \cite{Cornis1975}. \par From Hochster's characterization of prime spectra of commutative rings \cite{Hochst1969}, it follows that spectra of both bounded distributive lattices and of commutative rings describe the same class of topological spaces, the so-called {\it spectral spaces} (see \cite{DiScTr2019} for a detailed account). \par There are various characterizations of when a bounded distributive lattice $L$ is pseudocomplemented or a Stone algebra using either the spectral space of $L$ or its sister Priestley space (see \cite{Gratze1963,CheGra1969,Priest1974,Priest1975}), as well as when it is a Heyting algebra or a coHeyting algebra (see \cite{Esakia1974,Esakia2019,BBGK2010,DiScTr2019}). \par \smallskip\noindent In this paper we utilize the above results, as well as the structure of $\Spec C(T)$, to obtain several characterizations of when the lattice $\qcop(\Spec C(T))$ of compact open sets of $\Spec C(T)$ is pseudocomplemented or a coHeyting algebra. Our main results include the following characterizations: \par \smallskip\noindent Let $T$ be a completely regular space. Then \begin{itemize} \item $\qcop(\Spec C(T))$ is pseudocomplemented iff $X$ is basically disconnected iff $\qcop(\Spec C(T))$ is a Stone algebra (Theorem \ref{CofTSemiHeyting}). As a consequence we also show that $\qcop(\Spec C(T))$ is pseudocomplemented iff $\qcop(\Spec C^*(T))$ is pseudocomplemented (Corollary \ref{SpecCbetaXSemiH}) \item If $T$ is metrizable, then $\qcop(\Spec C(T))$ is a Heyting algebra iff $X$ is discrete (Corollary \ref{semiHeytingMetric}). \item In Theorem \ref{normalXcoHeytingCT} we show that the order dual of $\qcop(\Spec C(T))$ is pseudocomplemented iff it is a Heyting algebra iff it is a Stone algebra iff $X$ is a P-space. Furthermore, all these conditions on $\Spec C(T)$ are equivalent to their formulations for the subspace of z-prime ideals $\zSpec C(T)$ (see \ref{ContF1} for definitions). \end{itemize} \par \smallskip\noindent On the way we will also see how known results fit into the topic of pseudocomplemntation. For instance, on the ring theoretic side we reframe known characterizations of basically disconnected spaces (\ref{CofTSemiHeyting}) and Baer rings (\ref{QCOPStoneLatticeApplyStone}); on the lattice theoretic side, Theorem \ref{QCOPStoneLattice} combines and extends various known characterizations of Stone algebras (see \ref{QCOPStoneLatticeApply}). \section{Preliminaries} \noindent In this preliminary section we briefly recall Stone and Priestley dualities for bounded distributive lattices. The former gives rise to spectral spaces, while the latter gives rise to Priestley spaces; we briefly discuss the Cornish isomorphism between the two categories. We also recall prime spectra of commutative rings. Our notation mostly follows that in \cite{DiScTr2019}; for the reader interested in the frame theoretic approach to the matter we refer to \cite{Johnst1986}. \begin{FACT}{:Spectral Spaces}{SpSp}\quad Let $X$ be an arbitrary topological space. For $x,y\in X$ we say that $x$ \notion[]{specializes} to $y$, and write $x\spez y$, if $y\in \overline{\{x\}}$. It is well known (and easy to check) that the specialization relation $\spez$ is always a preorder, and that it is a partial order iff $X$ is a $T_0$-space. For $S \subseteq X$ we call \[ \Spez(S)=\{x\in X\st x\in \overline{\{s\}}\text{ for some }s\in S\} \] the set of \textbf{specializations of $S$} (in $X$) and \[ \Gen(S)=\{x\in X\st s\in \overline{\{x\}}\text{ for some }s\in S\} \] the set of \textbf{generalizations of $S$} (in $X$). Define \begin{align*} X^{\max}=\{x\in X\st \overline{\{x\}}=\{x\}\}\quad\text{and}\quad X^{\min}=\{x\in X\st \Gen(x)=\{x\}\}. \end{align*} If $X$ is the prime spectrum of a ring, then $X^{\max}$ is the set of maximal ideals of the ring. Since we are primarily interested in this example, we read $\spez $ as $\leq$ (instead of the alternative reading as $\geq$). Therefore, $\Spez(S) = {\uparrow} S$ and $\Gen(S) = {\downarrow} S$. We set \begin{align*} \CO(X)&=\text{open subsets of }X,\cr \qcop(X)&=\{U\in\CO(X)\st U\text{ is compact}\},\cr \qccl(X)&=\{X\setminus U\st U\in\qcop(X)\}, \cr \CK(X)&=\text{ the Boolean algebra generated by }\qcop(X)\text{ in the powerset of }X. \end{align*} \par \noindent Recall that $X$ is a \notion[]{spectral space} if it is sober and $\qcop(X)$ is a bounded sublattice and a basis for $\CO(X)$. For a spectral space $X$, we write $X_\inv$ for the \notion[]{inverse space} of $X$, which coincides with the de Groot dual of $X$ (see, e.g., \DST{defnInverseTopology}). Therefore, $\qcop(X_\inv)=\qccl(X)$. Furthermore, we write $X_\con$ for the \textbf{patch space} of $X$ (see, e.g., \DST{defnPatchSpace}). Thus, $X_\con$ is a \textbf{Boolean space} (compact, Hausdorff, and zero-dimensional) and $\qcop(X_\con)=\CK(X)$. Properties of these topologies will be referred to with the respective decoration, like `inversely compact' (meaning `compact in $X_\inv$' ) or `patch closed' (meaning `closed in $X_\con$'). \par The {\bf Stone representation} of \bdl s states that each such lattice $L$ is isomorphic to the lattice $\qcop(X)$ for some spectral space $X$, which is unique up to a homeomorphism. There are various ways to see this (see, e.g., \cite[sections 3.1--3.3]{DiScTr2019}). We choose $X$ to be the set \[ X=\PrimI(L)=\{\Dp\subseteq L\st \Dp\text{ is a prime ideal of }L\}, \] whose topology is given by the basis consisting of the sets $D(a):=\{\Dp\in X\st a\notin\Dp\}$ for $a\in L$; we write $V(a):=\{\Dp\in X\st a\in\Dp\}$. Then $X$ is a spectral space and the map \( L\lra \qcop(X), \quad a\mapsto D(a) \) is a lattice isomorphism. The definition of the space $\PrimI(L)$ looks formally very similar to the definition of the prime spectrum of a ring; as the latter is our target, we chose this presentation. By definition we see that \[ \Dp\spez \Dq \Longleftrightarrow \Dp\subseteq \Dq. \] Thus, $X^{\max}$ is the space of maximal ideals of $L$. The representation then gives rise to lattice isomorphisms \[ L_\inv\lra \qcop(X_\inv)=\qccl(X),\quad A\lra \qcop(X_\con)=\CK(X), \] where $L_\inv$ is the order dual of $L$, and $A$ is the Boolean envelope of $L$ (see, e.g., \DST{DefBooleanEnvelope}). \par \smallskip\noindent The morphisms in the category \kat{Spec} of spectral spaces are \notion{spectral maps}, i.e. (continuous) maps $f:X\lra Y$ with the property $V\in \qcop(Y)\Ra f^{-1}(V)\in\qcop(X)$. Stone duality says that \kat{Spec} is anti-equivalent (aka dually equivalent) to the category of \bdl s with bounded lattice morphisms. \end{FACT}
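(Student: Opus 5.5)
The plan is to verify the assertions of the Fact in the order they are stated, since each one feeds the next. They fall into three groups: the elementary claims about the specialization relation, the Stone representation via $\PrimI(L)$, and the transfer to $X_\inv$, $X_\con$ and to morphisms. Everything beyond the first group is classical, so I would check the concrete parts directly and cite \cite[sections 3.1--3.3]{DiScTr2019} for the rest.

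\emph{Specialization.} Reflexivity of $\spez$ holds because $x\in\overline{\{x\}}$. Transitivity holds because $y\in\overline{\{x\}}$ implies $\overline{\{y\}}\subseteq\overline{\{x\}}$. For antisymmetry, $x\spez y$ and $y\spez x$ together say exactly $\overline{\{x\}}=\overline{\{y\}}$, that is, $x$ and $y$ lie in the same open sets. So antisymmetry is literally the $T_0$ axiom.

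\emph{Stone representation.} Set $X=\PrimI(L)$. Primality and ideal closure give $D(a\wedge b)=D(a)\cap D(b)$ and $D(a\vee b)=D(a)\cup D(b)$, with $D(0)=\emptyset$ and $D(1)=X$. Hence the sets $D(a)$ form a basis closed under finite intersections, and $a\mapsto D(a)$ is a bounded lattice homomorphism. For injectivity, suppose $a\not\le b$; the prime ideal theorem, applied to the ideal ${\downarrow}b$ and the filter ${\uparrow}a$, yields a prime ideal $\Dp$ with $b\in\Dp$ and $a\notin\Dp$, so $\Dp\in D(a)\setminus D(b)$.

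The step I expect to be the main obstacle is showing that each $D(a)$ is compact and that every compact open set is of this form. For compactness, suppose $D(a)\subseteq\bigcup_i D(a_i)$ has no finite subcover. Then $a$ does not lie in the ideal generated by the $a_i$, and the prime ideal theorem separates $a$ from that ideal by a prime ideal, which lies in $D(a)$ but in no $D(a_i)$, a contradiction. Conversely, a compact open set is a finite union of basic sets $D(a_j)$, which equals $D(\bigvee_j a_j)$; this gives surjectivity. For sobriety, an irreducible closed set $F$ determines $\Dp=\{a\st D(a)\cap F=\emptyset\}$, which is a prime ideal whose closure is $F$. Uniqueness of $X$ up to homeomorphism follows because a spectral space is recovered from $\qcop(X)$ by sending each point to the prime ideal of compact opens not containing it. Finally, $\Dp\spez\Dq$ means that every $D(a)$ containing $\Dq$ also contains $\Dp$, i.e.\ $a\notin\Dq\Ra a\notin\Dp$, which is $\Dp\subseteq\Dq$.

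\emph{Inverse and patch spaces, and duality.} By the definitions cited from \cite{DiScTr2019}, the compact opens of $X_\inv$ are the complements of compact opens of $X$, so $a\mapsto X\setminus D(a)=V(a)$ gives an isomorphism from $L_\inv$ onto $\qccl(X)$. Likewise $\qcop(X_\con)=\CK(X)$ is the Boolean algebra generated by $\qcop(X)\cong L$ inside the powerset of $X$. This algebra has the universal property of the Boolean envelope, because a lattice embedding into a Boolean algebra extends uniquely to the Boolean algebra it generates. For duality, a spectral map $f$ induces the lattice map $f^{-1}\colon\qcop(Y)\to\qcop(X)$. Conversely, a lattice map $h\colon M\to L$ induces $\PrimI(L)\to\PrimI(M)$, $\Dp\mapsto h^{-1}(\Dp)$, and $h^{-1}(\Dp)$ is again a prime ideal. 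Checking that these two assignments are mutually inverse up to the natural isomorphisms above gives the anti-equivalence.
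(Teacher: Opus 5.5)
The paper gives no proof of this Fact. It is a recollection of definitions and standard results, and everything beyond the definitions (the Stone representation, the inverse and patch topologies, the Boolean envelope, the duality) is delegated to \cite{DiScTr2019}. Your proposal is correct and more self-contained: you verify the elementary parts by hand. These are antisymmetry of $\spez$ as the $T_0$ axiom, the identities $D(a\wedge b)=D(a)\cap D(b)$ and $D(a\vee b)=D(a)\cup D(b)$, injectivity and compactness of the $D(a)$ via the prime ideal theorem, sobriety via $\{a\st D(a)\cap F=\emptyset\}$, and $\Dp\spez\Dq\iff\Dp\subseteq\Dq$. The last one, by your first step, also gives $T_0$ and hence uniqueness of generic points. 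The gain is that the only external input for the Stone representation is the prime ideal theorem. What you still cite ($\qcop(X_\inv)=\qccl(X)$ and $\qcop(X_\con)=\CK(X)$) is exactly what the paper cites.

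Two steps are asserted rather than checked. First, surjectivity of $y\mapsto\{U\in\qcop(Y)\st y\notin U\}$ for a spectral space $Y$ needs compactness plus sobriety. For a prime ideal $P$ of $\qcop(Y)$, the closed set $F=Y\setminus\bigcup P$ meets $V\in\qcop(Y)$ iff $V\notin P$. Hence $F$ is nonempty and irreducible, and its generic point is sent to $P$. Second, reading $X^{\max}$ as the set of maximal ideals uses that maximal ideals of a distributive lattice are prime.

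The Boolean envelope sentence does not do what it claims. ``A lattice embedding into a Boolean algebra extends uniquely to the Boolean algebra it generates'' restates the universal property rather than justifying it. Uniqueness of the extension is automatic, because $\CK(X)$ is generated by $\qcop(X)$. The content is existence of an extension for an arbitrary bounded lattice homomorphism $h\colon L\to B'$.

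In your concrete model this is quick:
\begin{itemize}
\item Represent $B'$ as a field of subsets of a set $Y$.
\item For each $y\in Y$, the set $\Dp_y=\{a\in L\st y\notin h(a)\}$ is a prime ideal.
\item Take $K\mapsto\{y\in Y\st \Dp_y\in K\}$. This is a Boolean homomorphism on $\CK(X)$ sending $D(a)$ to $h(a)$.
\item Its image is the Boolean algebra generated by $h(L)$, hence lies in $B'$.
\end{itemize}
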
 \begin{FACT}{:Priestley spaces}{}\quad A {\bf Priestley space} is a Boolean space $X$ equipped with a partial order $\le$ that satisfies the {\bf Priestley separation axiom}: \[ x \not\le y \Longrightarrow \exists D \mbox{ clopen down-set} : x \notin D \mbox{ and } y \in D. \] The {\bf Priestley representation} then states that each bounded distributive lattice $L$ is isomorphic to the lattice of clopen down-sets of some Priestley space, which is unique up to an order-homeomorphism. In particular, if $X$ is the spectral space of $L$, then $(X_\con,\spez)$ is the Priestley space of $L$, where $X_\con$ is the patch space of $X$ and $\spez$ is the specialization order. \par \smallskip\noindent The morphisms in the category \kat{Priestley} of Priestley spaces are continuous and order preserving maps and Priestley duality says that \kat{Priestley} is anti-equivalent to the category of \bdl s with bounded lattice morphisms. \par \smallskip\noindent As is evident, there is a close connection between spectral spaces and Priestley spaces. Indeed, each spectral space $X$ gives rise to the Priestley space $(X_\con,\spez)$, and the topology of the spectral space is recovered as the topology of open down-sets of $(X_\con,\spez)$. As was demonstrated by Cornish in \cite{Cornis1975}, this correspondence extends to an isomorphism of \kat{Spec} and \kat{Priestley}, see also \cite{BBGK2010} and \DST{PriestleyCATisomorphism}. \end{FACT}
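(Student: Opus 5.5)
The plan is to derive everything from two facts already recalled in \ref{SpSp}: $X_\con$ is a Boolean space, and $\qcop(X)$ is a basis of $X$ consisting of patch clopen sets. The ordering convention matters throughout: $x\le y$ means $x\spez y$, i.e.\ $y\in\overline{\{x\}}$. In this convention every open set $U$ of $X$ is a down-set, because if $y\in U$ and $x\spez y$ then $y\in\overline{\{x\}}$ forces $x\in U$.

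\emph{Priestley separation.} Suppose $x\not\le y$, i.e.\ $y\notin\overline{\{x\}}$. Since $\qcop(X)$ is a basis, there is some $U\in\qcop(X)$ with $y\in U$ and $x\notin U$. This $U$ is patch clopen and a down-set, so it is the required separating set. Hence $(X_\con,\spez)$ is a Priestley space.

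\emph{Clopen down-sets are exactly $\qcop(X)$.} One inclusion was just observed: every $U\in\qcop(X)$ is a patch clopen down-set. For the converse, let $D$ be a patch clopen down-set and fix $y\in D$.
\begin{enumerate}
\item For each $x\in X\setminus D$ we have $x\not\le y$, since $D$ is a down-set. Separation gives $U_x\in\qcop(X)$ with $y\in U_x$ and $x\notin U_x$.
\item The patch closed sets $X\setminus U_x$ cover the patch compact set $X\setminus D$. So finitely many suffice, and $U^y:=U_{x_1}\cap\dots\cap U_{x_n}\in\qcop(X)$ satisfies $y\in U^y\subseteq D$.
\item The patch open sets $U^y$, $y\in D$, cover the patch compact set $D$. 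So $D$ is a finite union of them and lies in $\qcop(X)$.
\end{enumerate}
The same argument applied to an arbitrary patch open down-set $O$ stops after step 2. It shows $O=\bigcup_{y\in O}U^y$, so $O$ is open in $X$. Together with the fact that opens of $X$ are patch open down-sets, this recovers the topology of $X$ as the open down-sets of $(X_\con,\spez)$.

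\emph{Representation.} Combining this with the Stone representation $L\cong\qcop(X)$ yields the Priestley representation. For uniqueness and for the Cornish isomorphism, I would check that a map $f$ is spectral iff it is patch continuous and order preserving. Spectral maps pull back $\qcop$ sets, hence pull back patch clopens, so they are patch continuous; they are continuous, hence preserve specialization. Conversely, a patch continuous monotone map pulls back a clopen down-set to a clopen down-set, i.e.\ to a member of $\qcop(X)$ by the characterization above.

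The remaining direction is that every Priestley space arises this way, and I expect it to be the main obstacle. Given a Priestley space $(Y,\le)$, I would topologize $Y$ by its open down-sets. I would then show this space is spectral, with $\qcop$ equal to the clopen down-sets, and that its patch topology is the original topology. The key input is the Priestley separation axiom, used together with compactness in the same finite-intersection manner as above. In particular, sobriety requires care, and for it I would use that irreducible closed up-sets have a least element, which follows from compactness.
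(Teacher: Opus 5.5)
The paper gives no proof of this paragraph. It recalls Priestley's representation and duality, and Cornish's isomorphism $\kat{Spec}\cong\kat{Priestley}$, as known results, citing \cite{Priest1970}, \cite{Cornis1975}, \cite{BBGK2010} and \cite{DiScTr2019}. You instead derive everything from the Stone-duality facts already recalled, and what you actually carry out is correct:
\begin{itemize}
\item Priestley separation follows from the compact-open basis.
\item Your two-step compactness argument identifies the clopen down-sets of $(X_\con,\spez)$ with $\qcop(X)$, and the patch-open down-sets with the opens of $X$.
\item Spectral maps are exactly the patch-continuous monotone maps. For the converse you also need continuity, which holds because compact opens form a basis of the target.
\end{itemize}
One wording fix: in step 2, compactness needs the sets $X\setminus U_x$ to be patch \emph{open}. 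They are, being complements of patch clopen sets.

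Your organization has a real payoff. The Priestley representation, its uniqueness (via uniqueness in the Stone representation), and the anti-equivalence with bounded distributive lattices all become consequences of Stone duality plus the Cornish isomorphism, rather than separate results.

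The step you leave as a plan is that every Priestley space $(Y,\le)$ arises as $(Y',\spez)$ with $Y'$ spectral. Uniqueness also depends on it, so it should be written out. Your plan is the standard one and works. Make the following explicit.
\begin{enumerate}
\item[(a)] Each ${\uparrow}p$ is closed. If $p\not\le z$, a clopen down-set containing $z$ but not $p$ is disjoint from ${\uparrow}p$. Hence in the topology $\tau$ of open down-sets, $\overline{\{p\}}={\uparrow}p$, so $\tau$ is $T_0$ and its specialization order is exactly $\le$.
\item[(b)] By your step-2 argument, every open down-set is a union of clopen down-sets. So the compact opens of $(Y,\tau)$ are exactly the clopen down-sets, which form a bounded sublattice and a basis.
\item[(c)] Let $F$ be an irreducible closed up-set. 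The sets $F\cap D$, with $D$ a clopen down-set meeting $F$, have the finite intersection property \emph{because $F$ is irreducible}. By compactness they share a point $p$, and Priestley separation then gives $p\le q$ for all $q\in F$. Thus $F={\uparrow}p=\overline{\{p\}}$. Compactness alone, as you phrase it, does not give the least element; irreducibility and separation are both needed.
\item[(d)] The patch topology of $(Y,\tau)$ is generated by sets that are clopen in the original topology, so it is coarser than that compact topology. It is Hausdorff by separation, hence the two topologies coincide.
\end{enumerate}
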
 \medskip\noindent Here is a dictionary connecting terminology from spectral spaces and Priestley spaces: Let $X$ be a spectral space and let $P=(X_\con,\leq)$ be the corresponding Priestley space; recall that $x\leq y\iff y\in \overline{\{x\}}$. \begin{enumerate} \item The patch closed subsets of $X$ (also called \textit{proconstructible} in \cite{DiScTr2019}) are the closed subsets of $P$. \item The closure in $X$ of a patch closed set $S$ is ${\uparrow}S$. \item The closure in the inverse topology of a patch closed set $S$ is ${\downarrow}S$. \item The elements of $\CK(X)$ are called \textbf{constructible sets} and they are precisely the clopen subsets of $P$. \item The open subsets of $X$ are the open down-sets of $P$. If $U\subseteq X$, then $U$ is compact open iff $U$ is open and constructible iff $U$ is a clopen down-set of $P$. \item A set $S$ is compact in $X$ if and only if ${\downarrow}S$ is patch closed. \end{enumerate} \begin{FACT}{:Prime spectra of commutative rings}{}\quad We adopt the notational conventions of \cite[section 12]{DiScTr2019}, which also points to general texts on the prime spectrum of a ring and which might be used for additional elementary facts about these. Let $A$ be a ring, which always means commutative and unital in this paper. Recall that the prime spectrum (or Zariski spectrum) of $A$ is the spectral space $\Spec(A)$ of prime ideals of $A$ with the topology having the sets $D(f):=\{\Dp\in \Spec(A)\st f\notin \Dp\}$ as a basis; we write $V(f):=\{\Dp\in \Spec(A)\st f\in \Dp\}$. Specialization in $\Spec(A)$ is inclusion and in the setup of this paper this means $\Dp\leq \Dq\iff \Dp\subseteq \Dq$ ($\iff \Dq\in \overline{\{\Dp\}}$). The closed sets of $\Spec(A)$ are those of the form $V(S):=\{\Dp\in \Spec(A)\st S\subseteq \Dp\}$ for $S\subseteq A$. There is an antitone Galois connection between subsets of $A$ and subsets of $\Spec(A)$ mapping $S\subseteq A$ to $V(S)$ and $Z\subseteq \Spec(A)$ to $\bigcap Z$; this defines a bijection between radical ideals of $A$ and closed subsets of $\Spec(A)$. \end{FACT}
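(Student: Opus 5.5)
The substantive claim in this fact is the last one: the antitone Galois connection $S\mapsto V(S)$, $Z\mapsto\bigcap Z$ restricts to a bijection between radical ideals of $A$ and closed subsets of $\Spec(A)$. The topological part is routine and I would dispatch it first. Since $V(S)=\bigcap_{f\in S}V(f)$ and $V(f)=\Spec(A)\setminus D(f)$, every $V(S)$ is closed. Conversely, a closed set is the complement of a union $\bigcup_{f\in S}D(f)$ of basic opens, hence equals $V(S)$. The Galois connection is immediate: $Z\subseteq V(S)\iff S\subseteq\Dp$ for all $\Dp\in Z\iff S\subseteq\bigcap Z$.

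For the bijection I would verify the two closure identities.

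\emph{Closed sets are fixed.} For any $Z\subseteq\Spec(A)$ I claim $V(\bigcap Z)=\overline{Z}$. The set $V(\bigcap Z)$ is closed and contains $Z$, so it contains $\overline Z$. Conversely, $\overline Z=V(S)$ for some $S$. Then $Z\subseteq V(S)$ gives $S\subseteq\bigcap Z$, hence $V(\bigcap Z)\subseteq V(S)=\overline Z$. In particular a closed $Z$ satisfies $V(\bigcap Z)=Z$.

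\emph{Radical ideals are fixed.} For an ideal $I$ I claim $\bigcap V(I)=\sqrt I$. The inclusion $\supseteq$ is clear: if $f^n\in I\subseteq\Dp$ with $\Dp$ prime, then $f\in\Dp$. The inclusion $\subseteq$ is Krull's argument. Let $f\notin\sqrt I$, so the multiplicative set $M=\{1,f,f^2,\dots\}$ is disjoint from $I$. By Zorn's lemma choose an ideal $\Dp\supseteq I$ maximal among ideals disjoint from $M$. The standard argument shows $\Dp$ is prime: if $a,b\notin\Dp$, then $\Dp+(a)$ and $\Dp+(b)$ meet $M$, and multiplying the two witnesses puts an element of $M$ into $\Dp+(ab)$, so $ab\notin\Dp$. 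Thus $\Dp\in V(I)$ and $f\notin\Dp$. Alternatively, one can take a prime of the localization $A_f/IA_f\neq0$ and pull it back. Also $\bigcap Z$ is always a radical ideal, being an intersection of prime ideals.

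Together these show that the two maps restrict to mutually inverse, inclusion-reversing bijections between radical ideals and closed sets. The only step with real content is the prime-avoidance existence argument via Zorn's lemma; everything else is formal Galois-connection bookkeeping.
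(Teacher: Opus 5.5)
Your proof is correct and follows the standard route. The paper gives no proof of this item: it only recalls background facts and points to \cite[section 12]{DiScTr2019}. The identities $V(\bigcap Z)=\overline{Z}$ and $\bigcap V(I)=\sqrt{I}$, the latter via Krull's Zorn-lemma argument, are exactly what such a reference supplies.

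The statement also asserts two things your proposal does not reach: that $\Spec(A)$ is a \emph{spectral} space, and that specialization is inclusion. Your ``routine topological part'' only shows that the closed sets are the $V(S)$. Both claims follow quickly from what you proved.

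Specialization: taking $Z=\{\Dp\}$ gives $\overline{\{\Dp\}}=V(\Dp)$, that is, $\Dq\in\overline{\{\Dp\}}\iff\Dp\subseteq\Dq$.

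Compactness: suppose $D(f)\subseteq\bigcup_i D(g_i)$ and let $J$ be the ideal generated by the $g_i$. Then $V(J)\subseteq V(f)$, so $f\in\bigcap V(J)=\sqrt{J}$. Writing some $f^n$ as a finite combination of the $g_i$ gives a finite subcover. Hence each $D(f)$ is compact, in particular $\Spec(A)=D(1)$. The compact opens are then exactly the finite unions of basic opens, and $D(f)\cap D(g)=D(fg)$ shows they form a bounded sublattice and a basis.

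Sobriety: let $V(I)$, with $I$ radical, be irreducible, so $I\neq A$, and let $ab\in I$. Then $V(I)=V(I+aA)\cup V(I+bA)$, so by irreducibility one of them equals $V(I)$, say $V(I)=V(I+aA)$. Your bijection then gives $I=\bigcap V(I)=\sqrt{I+aA}\ni a$. Thus $I$ is prime and $V(I)=\overline{\{I\}}$. This generic point is unique because $V$ is injective on radical ideals.
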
 \begin{FACT}{:Prime spectra of rings of continuous functions}{ContF1}\quad Let $T$ be a completely regular space. We write $C(T)$ for the ring of continuous functions $T\lra \R$. Note that $C(T)$ is also a poset where $f\leq g$ means $\forall t\in T: f(t)\leq g(t)$. We adopt the terminology from \cite{GilJer1960}. \begin{enumerate}[{\rm (i)},itemsep=1ex] \item\labelx{ContFidealZideal} An ideal $I$ of $C(T)$ is called \textbf{z-radical} or a \textbf{z-ideal} if $f\in I$ and $Z(g)\supseteq Z(f)$ implies $g\in I$. Here $Z(f)=\{x\in T\st f(x)=0\}$ is the \textbf{zero set} of $f$. The complements are called \textbf{cozero sets} and they form the bounded sublattice $\Coz(T)$ of the frame $\CO(T)$ of opens (use $Z(f)\cap Z(g)=Z(f^2+g^2)$). \item\labelx{ContFSpecIsRootSystem} The prime ideal spectrum $\Spec C(T)$ of $C(T)$ is a spectral root system, see \DST{charSpezInRealSpec}. \item\labelx{ContFFspace} $T$ is called an \notion[]{F-space} if $\Spec C(T)$ is \textbf{stranded}, i.e. $\Spec C(T)$ is also a forest (see \GilJer{208}{14.25}) and a \notion[]{P-space} if $\Spec C(T)$ is Boolean, see section \ref{sectionTreeRoot} for details. \item\labelx{ContFBasicallyDisconnected} A completely regular space $T$ is called \notion[basically disconnected]{basically disconnected} if the closure of any cozero set is open. Such a space is zero-dimensional (i.e. the clopen sets form a basis) because when $x\in U\in \CO(T)$, then there is $V\in\Coz(T)$ with $x\in V$ and $\overline{V}\subseteq U$. \item\labelx{ContFeps} The map $\eps:T\lra \Spec C(T)$ that sends $x\in T$ to $\Dm_x:=\{f\st f(x)=0\}$ is an embedding of topological spaces and its image is contained in the subspace $\beta T$ of maximal ideals of $C(T)$. The space $\beta T$ is a compact Hausdorff space and is called the {\bf Stone--\v Cech Compactification} of $T$ (cf. \DST{defnBetaX}). \item\labelx{ContFz} The prime z-ideals of $\Spec C(T)$ form a spectral subspace of $\Spec C(T)$, denoted by $\zSpec C(T)$ and this is the Stone dual of the lattice $\Coz(T)$. Hence $\Coz(T)\cong\qcop(\zSpec C(T))$ and $\qccl(\zSpec C(T))$ is (isomorphic to) the order dual of $\Coz(T)$. Each maximal ideal and every minimal prime ideal of $C(T)$ is z-radical. The space $\zSpec C(T)$ is the patch closure of $\eps(T)$. All this is more or less spread out in \cite{GilJer1960} and in \cite{Schwar1997}; we refer to \cite[p. 145]{Tressl2006} for a summary with more details. \end{enumerate} \par \noindent The spaces considered above are displayed in the following diagram of spaces and subspaces, where we consider $\eps$ as inclusion and write $Z=\zSpec C(T)$: \begin{center} \begin{tikzcd}[row sep=scriptsize,column sep=7ex,ampersand replacement=\&] T \ar[r,hook] \& \beta T = X^{\max} \ar[r,hook] \& Z=\overline{T}^\con \ar[r,hook] \& X = \Spec C(T)\\ \ \& X^{\min}\ar[ru,hook] \& \ \& \ \end{tikzcd} \end{center} \noindent Here $\overline{T}^\con$ denotes the closure of $T$ for the patch topology. Taking preimages under the embedding $T\lra Z$ induces a lattice isomorphism $\qcop(Z)\lra \Coz(T)$. \par \end{FACT}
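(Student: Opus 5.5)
Since this Fact collects standard material, I plan to check each item directly where it is elementary and to reduce the rest to the cited sources (\cite{GilJer1960}, \cite{Schwar1997}, \cite{Tressl2006}, \cite{DiScTr2019}). The key tool throughout is the lattice-ordered structure of $C(T)$.

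For (i), $\Coz(T)$ is a bounded sublattice of $\CO(T)$ because of the identities $Z(fg)=Z(f)\cup Z(g)$ and $Z(f^2+g^2)=Z(f)\cap Z(g)$, together with $Z(1)=\emptyset$ and $Z(0)=T$.

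For (ii), I would first show that every ideal $I$ of $C(T)$ is absolutely convex, i.e. $|f|\leq|g|$ and $g\in I$ imply $f\in I$.
\begin{itemize}
\item The function $h=f/(g^{2}+|f|)^{1/3}$ on $\{f\neq0\}$, extended by $0$, is continuous, since $|h|\le |f|^{1/3}$ near points where $f=0$.
\item One checks that $f=h\cdot g^{1/3}\cdot(\ldots)$ lies in $I$ when $g\in I$.
\end{itemize}
Given this, take a prime $\Dp$. For arbitrary $f,g$ we have $(f-g)^+(f-g)^-=0\in\Dp$, so one of $(f-g)^{\pm}$ lies in $\Dp$. Convexity then yields that the prime ideals containing $\Dp$ are totally ordered. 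Hence $\Spec C(T)$ is a root system, and by \DST{charSpezInRealSpec} it is a spectral root system. Item (iii) is only a definition, pointing to \GilJer{208}{14.25}.

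For (iv), let $x\in U\in\CO(T)$. Complete regularity gives $f\in C(T)$ with $f(x)=1$ and $f=0$ off $U$. Put $V=\{f>1/2\}\in\Coz(T)$; then $\overline V\subseteq\{f\ge 1/2\}\subseteq U$. By basic disconnectedness $\overline V$ is open, so it is a clopen neighbourhood of $x$ inside $U$, and the clopen sets form a basis.

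For (v), $\Dm_x$ is the kernel of the evaluation map onto $\R$, hence a maximal ideal. The map $\eps$ is injective because points are separated by functions. It is an embedding because $\eps^{-1}(D(f))=T\setminus Z(f)$ and, by complete regularity, cozero sets form a basis of $T$. That $\beta T$ is compact Hausdorff is the cited Gel'fand--Kolmogorov statement.

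For (vi):
\begin{itemize}
\item \emph{Stone duality.} I would identify prime ideals of the lattice $\Coz(T)$ with prime z-ideals via $\Dp\mapsto\{\Coz(f)\st f\in\Dp\}$, with inverse $P\mapsto\{f\st \Coz(f)\in P\}$. The z-condition is exactly what makes this a bijection, and it is a homeomorphism since $D(f)$ corresponds to $D(\Coz f)$.
\item \emph{Minimal primes are z-ideals.} $C(T)$ is reduced, so $f\in\Dp$ with $\Dp$ minimal iff $\Ann(f)\not\subseteq\Dp$. If $Z(g)\supseteq Z(f)$ then $\Ann(f)\subseteq\Ann(g)$, which gives the claim.
\item \emph{Maximal ideals are z-ideals.} This is \cite{GilJer1960} 2.7.
\item \emph{Patch closure.} Every nonempty constructible subset of $\zSpec C(T)$ has the form $D(f)\cap V(g)$ and meets $\eps(T)$, since it corresponds to a nonempty difference of cozero sets. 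Conversely, $\zSpec C(T)$ is patch closed, as it is an intersection of the constructible conditions $f\in\Dp\Rightarrow g\in\Dp$.
\end{itemize}
Finally, the isomorphism $\qcop(Z)\to\Coz(T)$ is induced by preimage, since $D(f)\cap T=\Coz(f)$.

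The main obstacle is the absolute convexity step in (ii), where the cube-root construction must be verified to be continuous. I would otherwise simply cite \cite[Ch.~14]{GilJer1960} for it.
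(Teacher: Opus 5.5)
\textbf{The gap: the convexity lemma in (ii) is false as stated.} It is not true that every ideal of $C(T)$ is absolutely convex. In $C(\R)$ let $i$ be the identity function and $I=i\cdot C(\R)$. Then $f=\lvert i\rvert$ and $g=i$ satisfy $\lvert f\rvert\le\lvert g\rvert$ and $g\in I$. But $f\notin I$: an identity $\lvert x\rvert=h(x)\,x$ forces $h=1$ on $(0,\infty)$ and $h=-1$ on $(-\infty,0)$, so $h$ cannot be continuous at $0$. Hence no choice of the factor $(\ldots)$ in your cube-root construction can work. The real obstacle is not continuity of $h$ but membership in $I$. A factorisation $f=k\cdot g^{1/3}$ with $k$ continuous gives $f\in I$ only if $g^{1/3}\in I$, which you can guarantee only for radical $I$.

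\textbf{The repair.} State the lemma only for radical (in particular prime) ideals; this is all your argument needs. Suppose $\lvert f\rvert\le\lvert g\rvert$ and $g\in I$ with $I$ radical. Put $k=f^2/g$ on $T\setminus Z(g)$ and $k=0$ on $Z(g)$. Then $\lvert k\rvert\le\lvert g\rvert$, so $k$ is continuous, $f^2=kg\in I$, and therefore $f\in I$.

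\textbf{Write out the chain step.} Convexity is used in the two primes above $\Dp$, not in $\Dp$ itself. Let $\Dq,\Dq'\supseteq\Dp$ with $f\in\Dq\setminus\Dq'$ and $g\in\Dq'\setminus\Dq$. Replace $f,g$ by $u=\lvert f\rvert$ and $v=\lvert g\rvert$; this does not change membership in primes, since $u^2=f^2$. If $(u-v)^+\in\Dp$, then $u=(u\wedge v)+(u-v)^+\in\Dq'$, because $0\le u\wedge v\le v$ and $\Dq'$ is convex; this is a contradiction. Symmetrically, if $(u-v)^-\in\Dp$, then $v=(u\wedge v)+(u-v)^-\in\Dq$, again a contradiction. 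With this correction, (ii) is the classical Gillman--Jerison convexity argument. It makes (ii) self-contained, whereas the paper refers to the description of specialization in real spectra from \cite{DiScTr2019}.

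\textbf{A wording fix in (vi).} The sentence ``every nonempty constructible subset of $Z$ has the form $D(f)\cap V(g)$'' is inaccurate, since constructible sets are finite unions of such sets. Replace it with ``every basic patch-open subset of $Z$ has the form $D(f)\cap V(g)\cap Z$.'' That suffices for patch density, and it is exactly what your argument handles: if $T\setminus Z(f)\subseteq T\setminus Z(g)$, the z-property gives $D(f)\cap V(g)\cap Z=\emptyset$.

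The remaining items of your proposal are correct.
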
 \section{Pseudocomplementation in bounded distributive lattices} \labelx{sectionPC} \noindent In this section we focus on the lattice theoretic side of pseudocomplementation and variants, as well as their translation into topology using Stone duality. A particular emphasis is given to Heyting algebras and Stone algebras. The results will be deployed in section \ref{sectionRings} to characterize pseudocomplementation of various lattices attached to rings of continuous functions. \begin{FACT}{:Pseudocomplementation}{pscompBasic} Let $L=(L,\land,\lor,\perp,\top)$ be a \bdl. We recall that the {\bf pseudocomplement} of $a\in L$ is the largest element of the set \[ \{ x \in L \mid a\land x = \perp \}. \] We write $a^*$ for the pseudocomplement when it exists. If all elements of $L$ have a pseudocomplement, then $L$ is called {\bf pseudocomplemented}.\footnote{If the pseudocomplementation map is named, the resulting structure is a {\it p-algebra} (see, e.g., \cite[section 7.1]{Blyth2005}).} The following theorem provides a characterization of the duals of pseudocomplemented lattices: \end{FACT}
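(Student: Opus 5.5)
I take the announced theorem to be the standard duality statement: for a spectral space $X$ with $L\cong\qcop(X)$, the lattice $L$ is pseudocomplemented iff for every $U\in\qcop(X)$ the open set $X\setminus\overline{U}$ is compact. In that case $U^*=X\setminus\overline{U}$. In Priestley terms, $\overline{U}={\uparrow}U$ because $U$ is patch closed, so the condition says that $X\setminus{\uparrow}U$ is clopen in $(X_\con,\leq)$ for every clopen down-set $U$. The plan is to prove the spectral form directly and then translate it using the dictionary.

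\emph{Step 1 (the largest open set disjoint from $U$).} Fix $U\in\qcop(X)$. An open set $V$ satisfies $V\cap U=\emptyset$ iff $V\cap\overline{U}=\emptyset$, since $V$ is open. Hence $W:=X\setminus\overline{U}$ is the largest open subset of $X$ disjoint from $U$. If $W$ is compact, then $W\in\qcop(X)$, and it is therefore the largest element of $\{V\in\qcop(X)\st V\cap U=\emptyset\}$. So $W=U^*$, which proves the direction ``$\Leftarrow$''.

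\emph{Step 2 (converse).} Suppose the pseudocomplement $U^*$ exists in $\qcop(X)$. Since $U^*\cap U=\emptyset$ and $U^*$ is open, Step 1 gives $U^*\subseteq W$. For the reverse inclusion, use that $\qcop(X)$ is a basis of $X$. Then $W$ is a union of compact open sets $V\subseteq W$. Each such $V$ is disjoint from $U$, so $V\subseteq U^*$ by maximality. Therefore $W\subseteq U^*$, hence $W=U^*$, and in particular $W$ is compact. This is the only place the basis property of spectral spaces is needed.

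\emph{Step 3 (Priestley form).} By the dictionary, the closure of the patch closed set $U$ is ${\uparrow}U$. A subset of $X$ is compact open iff it is a clopen down-set of $P=(X_\con,\leq)$. The set $X\setminus{\uparrow}U$ is automatically an open down-set, because ${\uparrow}U$ is closed in $P$. It is therefore compact open iff it is clopen in $X_\con$, iff ${\uparrow}U$ is open in $X_\con$, which gives the Priestley reformulation.

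There is no real obstacle. The only point needing care is Step 2: existence of $U^*$ inside the sublattice $\qcop(X)$ must be upgraded to a statement about the full open set $W$, and this works because compact opens form a basis.
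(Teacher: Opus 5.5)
\textbf{Verdict: correct, by a different route.} Your argument is correct. What you prove is exactly the equivalence (i)$\Leftrightarrow$(ii) of the theorem this definition introduces: since $X\setminus\overline{U}$ is open, it is compact iff it is constructible iff $\overline{U}={\uparrow}U$ is constructible.

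The paper gives no argument of its own here. It cites \cite[Proposition~1]{Priest1974} for (i)$\Leftrightarrow$(ii) and \cite{DiScTr2019} for the remaining equivalence. Your proof is self-contained in spectral-space terms: $X\setminus\overline{U}$ is the largest open set missing $U$, and the basis property forces any pseudocomplement in $\qcop(X)$ to coincide with it. It also yields the explicit formula $U^*=X\setminus\overline{U}$. From that, $U^{**}=X\setminus\overline{X\setminus\overline{U}}=\interior(\overline{U})$ is immediate; the paper instead quotes this from \cite{DiScTr2019}.

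\textbf{What is not covered: condition (iii).} Your proposal does not address the paper's condition (iii): $\interior(\overline{U})\in\qcop(X)$ for all $U\in\qcop(X)$, and $X^{\min}$ is compact. The paper relies on this later: in \ref{HeytingWhenXmaxInCLXmin} and \ref{CofTConvexSemiHeyting}, being a PC-space is used to get $X^{\min}$ patch closed. Your setup gives it with little extra work.
\begin{itemize}
\item The first half of (iii) is $U^{**}=\interior(\overline{U})$, which is compact open by applying (ii) to $U^*$.
\item For the second half, a minimal point lying in ${\uparrow}U$ already lies in $U$. So under (ii) the set $\overline{U}\setminus U$ is constructible, hence patch open, and misses $X^{\min}$.
\item Suppose $y$ is a non-minimal point in the patch closure of $X^{\min}$. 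Pick a minimal $x<y$ and $U\in\qcop(X)$ with $x\in U$ and $y\notin U$. Then $y\in\overline{U}\setminus U$, a contradiction. So $X^{\min}$ is patch closed, hence compact.
\item Conversely, assume (iii). Remark~\ref{cozeroCompInSpec} provides $V\in\qcop(X)$ with $U\cap V=\emptyset$ and $U\cup V$ dense. Then $X\setminus\overline{U}=\interior(\overline{V})$, which is compact open by the first half of (iii), and by your Step~1 it is $U^*$.
\end{itemize}
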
 \begin{FACT}{Theorem}{pscompBasic} Let $L$ be a bounded distributive lattice and $X$ its space of prime ideals. The following conditions are equivalent: \begin{enumerate}[{\rm (i)}] \item $L$ is pseudocomplemented. \item For every $U\in\qcop(X)$, the closure $\overline{U}={\uparrow} U$ is constructible. \item\labelx{pscompBasicCharPC} The following two conditions hold.\footnote{Observe that neither of these conditions can be dropped, see \DST{XminCompactNotpsCompl}.} \begin{enumerate}[label=(\alph*)] \item\labelx{pscompBasicCharPCreg} For $U\in \qcop(X)$ the open regularization $\interior(\overline{U})$ belongs to $\qcop(X)$, and \item\labelx{pscompBasicCharPCcomp} $X^{\min}$ is \qc; in this case $X^{\min}$ is even a patch closed subset of $X$, see \DST{minimalPointsCompact}. \end{enumerate} \end{enumerate} \end{FACT}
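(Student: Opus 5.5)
We identify $L$ with $\qcop(X)$ via $a\mapsto D(a)$. The plan is to first compute the candidate pseudocomplement of $U\in\qcop(X)$ topologically, which gives (i)$\Leftrightarrow$(ii), and then to split (ii) into the two conditions of (iii).

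The first step is to show that for $U,W\in\qcop(X)$ we have $U\cap W=\emptyset$ iff $W\subseteq X\setminus\overline{U}$. Since $\overline U={\uparrow}U$ by the dictionary (a compact open set is patch closed), $X\setminus\overline{U}$ is an open down-set. Hence the compact opens disjoint from $U$ are exactly the compact opens contained in the open set $O:=X\setminus \overline U$, and $U^*$ exists iff this family has a largest member. If $O$ is compact, then $O$ itself is the largest member. Conversely, suppose a largest member $W_0$ exists. Since $\qcop(X)$ is a basis, $O$ is the union of the compact opens inside it, and each of these lies in $W_0$; so $O=W_0$ is compact. Thus $U^*$ exists iff $X\setminus\overline U$ is compact open, i.e.\ iff $\overline U$ is constructible ($\overline U$ is already closed, and a closed set is constructible iff its complement is compact open). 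This gives (i)$\Leftrightarrow$(ii), with $U^*=X\setminus\overline U$.

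For (ii)$\Rightarrow$(iii) I would argue as follows.

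\emph{Condition (b).} Apply (ii) to $U=X$ in the trivial form, and more usefully to $U=\emptyset$. Neither gives (b) directly, so instead I would show that $X^{\min}$ is the intersection of the constructible sets $\overline{U}\cup (X\setminus \overline U)$-type data. Concretely, I expect that $x$ is minimal iff for every $U\in\qcop(X)$ we have $x\in U$ or $x\in X\setminus\overline U=U^*$. The reason is that for a minimal $x$ with $x\notin U$, the statement $x\in\overline U$ would mean some $u\in U$ satisfies $u\le x$, and then minimality gives $u=x$, a contradiction. Conversely, if $y<x$, then separating $y$ from $x$ by a compact open $U\ni y$ with $x\notin U$ puts $x\in\overline U$, so $x\notin U\cup U^*$. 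Hence $X^{\min}=\bigcap_{U}(U\cup U^*)$ is an intersection of constructible sets under (ii). It is therefore patch closed, hence patch compact, hence compact. Note that the separation step needs $x\notin U$ while $y\in U$; this is possible because $y\not\geq x$ and compact opens are down-sets containing $y$.

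\emph{Condition (a).} $\interior(\overline U)=X\setminus\overline{X\setminus\overline U}=X\setminus\overline{U^*}=U^{**}$ is compact open by (ii) applied to $U^*$.

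For (iii)$\Rightarrow$(ii), let $U\in\qcop(X)$, which is patch closed. I need $X\setminus\overline U$ to be compact. The key observation is that an open set $O$ is determined by its minimal points, because every point lies above a minimal one and $O$ is a down-set. Write $O=X\setminus \overline U$ and $R=\interior(\overline U)\in\qcop(X)$ by (a). I claim $X^{\min}\cap O=X^{\min}\setminus R$. For a minimal $x$, being in $\overline U={\uparrow}U$ means $x\in U$. Since $U\subseteq R\subseteq\overline U$, this gives $X^{\min}\cap\overline U=X^{\min}\cap U=X^{\min}\cap R$, and the claim follows.

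Now $X^{\min}$ is patch closed by (b) and \DST{minimalPointsCompact}, and $R$ is constructible, so $K:=X^{\min}\setminus R$ is patch closed and hence compact. Because $O$ is an open down-set, $O\supseteq{\downarrow}K$, and $O$ is covered by finitely many compact opens $W_1,\dots,W_n\subseteq O$ that cover $K$...

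This is the main obstacle: points of $O$ above $K$ need not lie in $\bigcup W_i$. To handle it I would use regularity. If $y\in O$ and $y\notin W:=\bigcup W_i$, pick a minimal $x\le y$. Then $x\in O$ since $O$ is a down-set, and $x\in K\subseteq W$. Every compact open containing $y$ contains $x$, but not conversely, so this comparison does not close the gap directly. Instead I would exploit that $W\cap U=\emptyset$ and $O=X\setminus\overline{U}$, and show $O=X\setminus\overline{R}$ with $X\setminus\overline{R}\supseteq X\setminus\overline{X\setminus\overline{W}}$... If this route fails, I would switch to the Priestley formulation. There I would prove that ${\uparrow}U$ is clopen using the facts that the closed set ${\uparrow}U$ equals ${\uparrow}(U\cap X^{\min})\cup\ldots$ and that ${\uparrow}(X^{\min}\cap R)={\uparrow}(X^{\min}\cap U)$. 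I expect the cleanest statement to be $\overline U=\overline{R}$ together with $\overline{R}={\uparrow}(R\cap X^{\min})$ being constructible, as the image of a compact set meeting both conditions. Nailing down this step is where I would spend the effort.
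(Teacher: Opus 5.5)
Your proofs of (i)$\Leftrightarrow$(ii) (with $U^*=X\setminus\overline U$) and of (ii)$\Rightarrow$(iii) are correct. The identity $X^{\min}=\bigcap_U(U\cup U^*)$ is a clean, self-contained way to get (b), and $\interior(\overline U)=X\setminus\overline{U^*}=U^{**}$ gives (a); the paper simply cites the literature for both equivalences. However, (iii)$\Rightarrow$(ii) is not proved, and that is a genuine gap. Trying to show that every $y\in O=X\setminus\overline U$ lies in $W=W_1\cup\dots\cup W_n$ aims at a false target. $W$ is just some compact open set with $K\subseteq W\subseteq O$, and it can be strictly smaller than $O$: in the two-point Sierpi\'nski space with $U=\emptyset$, $O$ is the whole space but $W$ is the generic point. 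Applying (a) to $U$ is a dead end, because $\overline R=\overline U$, so $R$ carries nothing new; indeed $K=X^{\min}\setminus R=X^{\min}\setminus U$, so $R$ is not needed at all. The fallback idea, that ${\uparrow}(R\cap X^{\min})$ is constructible ``as the image of a compact set'', is circular. The up-closure of a patch closed set is patch closed, but its constructibility is exactly what (ii) asserts. Your comparison of $O$ with $X\setminus\overline{X\setminus\overline W}=\interior(\overline W)$ was pointing the right way, but you never establish the needed equality.

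The missing idea is to apply (a) to $W$ rather than to $U$. Take $W\in\qcop(X)$ with $K\subseteq W\subseteq O$, as you did. For $y\in O$, pick a minimal $x\le y$; then $x\in O\cap X^{\min}=K\subseteq W$, hence $y\in{\uparrow}W=\overline W$. This is exactly your comparison, read as $y\in\overline W$ instead of $y\in W$. So $O\subseteq\overline W$, and since $O$ is open, $O\subseteq\interior(\overline W)$. Conversely, $W\cap U=\emptyset$ and $U$ is open, so $\overline W\cap U=\emptyset$. Thus $\interior(\overline W)$ is an open set disjoint from $U$, hence disjoint from $\overline U$, i.e.\ $\interior(\overline W)\subseteq O$. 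Therefore $O=\interior(\overline W)$, which lies in $\qcop(X)$ by (a) applied to $W$, so $\overline U=X\setminus O$ is constructible. With this step added, your argument becomes a complete, self-contained proof of the theorem.
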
 \begin{proof} The equivalence of (i) and (ii) is proved in \cite[Proposition 1]{Priest1974}. For the equivalence of (ii) and \ref{pscompBasicCharPC} see \DST{sHeytingIsQuarterHeytingPlusXminComp}. \end{proof} \par \noindent Following \cite{BeBeIl2016}, we call the spectral spaces satisfying the condition in the above theorem {\bf PC-spaces}; in \DST{DefnHeytingSpectral} they are called \textit{semi-Heyting}. We next recall that a pseudocomplemented lattice $L$ is a {\bf Stone algebra} according to \cite{GraSch1957} provided $a^* \vee a^{**} = \top$ for each $a \in L$. Let $X$ be a PC-space. For $U \in \qcop(X)$ we have $U^{**} = \interior(\overline{U})$ (see \DST{PseudoComplementInLocSp}), yielding the dual characterization given in \ref{QCOPStoneLattice} of Stone algebras (cf. \cite{GraSch1957,DumLem1959,Priest1974}). \begin{FACT}{Remark}{cozeroCompInSpec} Condition \ref{pscompBasicCharPCcomp} in \ref{pscompBasic} alone also has a meaning in terms of complementation, which goes as follows. In any spectral space $X$ the set $X^{\min}$ is compact if and only if for all $U\in \qcop(X)$ there is some $V\in \qcop(X)$ such that $X^{\min}=U^{\min}\disjoint V^{\min}$. This follows from \DST{minimalPointsCompact}, which says that $X^{\min}$ is compact iff $X$ and $X_\inv$ induce the same topology on $X^{\min}$. \par The property $X^{\min}=U^{\min}\disjoint V^{\min}$ resembles the property of $V$ being a ``generic complement'' of $U$ in $X$ and it is equivalent to saying that $U\cap V=\0$ and $U\cup V$ is dense in $X$. \end{FACT}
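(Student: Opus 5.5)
The plan is to reduce everything to the criterion quoted from \DST{minimalPointsCompact}: $X^{\min}$ is compact if and only if $X$ and $X_\inv$ induce the same topology on $X^{\min}$. Two elementary observations will be used throughout.
\begin{enumerate}[{\rm (a)}]
\item For any open $U$ we have $U^{\min}=U\cap X^{\min}$, since open sets are down-sets (closed under generalization).
\item Every point of $X$ generalizes to a point of $X^{\min}$. This follows from Zorn's lemma, using that chains in a spectral space have infima; for $\Spec$ of a ring it is the existence of minimal primes below a given prime.
\end{enumerate}

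For the direction ``$X^{\min}$ compact $\Ra$ generic complements exist'', fix $U\in\qcop(X)$. Since $X\setminus U\in\qccl(X)=\qcop(X_\inv)$, the set $X^{\min}\setminus U$ is open in $X^{\min}$ for the inverse topology. By the criterion it is therefore open in $X^{\min}$ for the topology of $X$. It is also closed in the compact space $X^{\min}$, hence compact. For each $x\in X^{\min}\setminus U$ choose $W_x\in\qcop(X)$ with $x\in W_x$ and $W_x\cap X^{\min}\subseteq X^{\min}\setminus U$. Compactness gives finitely many $W_x$ covering $X^{\min}\setminus U$, and we let $V$ be their union. Then $V\in\qcop(X)$ and, by (a), $V^{\min}=X^{\min}\setminus U$, so $X^{\min}=U^{\min}\disjoint V^{\min}$.

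For the converse, assume every $U\in\qcop(X)$ has such a $V$. Then $X^{\min}\setminus U=X^{\min}\cap V$, so the traces of the inverse basic opens $X\setminus U$ are open in the $X$-topology on $X^{\min}$. Symmetrically, $X^{\min}\cap U=X^{\min}\cap(X\setminus V)$, so the traces of the basic opens $U$ are open in the inverse topology on $X^{\min}$. Hence the two induced topologies coincide, and the criterion gives that $X^{\min}$ is compact. The only care needed in both directions is bookkeeping with bases, so I do not expect a serious obstacle there.

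For the final reformulation, first consider disjointness. The set $U\cap V$ is open, so by (b) it is nonempty iff it contains a minimal point, i.e. iff $U^{\min}\cap V^{\min}\neq\0$. For density, put $W=U\cup V\in\qcop(X)$. Being constructible, $W$ is patch closed, so $\overline W={\uparrow}W$ by the dictionary. If $X^{\min}\subseteq W$, then (b) gives ${\uparrow}W=X$. Conversely, if $x\in X^{\min}$ lies in ${\uparrow}W$, then some $w\in W$ satisfies $w\le x$, and minimality forces $w=x\in W$. Thus $W$ is dense iff $X^{\min}\subseteq U\cup V$. Together with disjointness, this is exactly $X^{\min}=U^{\min}\disjoint V^{\min}$. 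The main (mild) obstacle is justifying (b) and the identity $\overline W={\uparrow}W$, both of which come from the standard spectral-space facts recalled in the dictionary.
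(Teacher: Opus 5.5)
Your proposal is correct and takes essentially the paper's route: the paper derives the remark directly from the quoted criterion that $X^{\min}$ is compact iff $X$ and $X_\inv$ induce the same topology on $X^{\min}$, and you carry out this same reduction. You also supply the details the paper leaves implicit, namely the finite-subcover argument producing a compact open $V$, the comparison of bases in the converse, and the verification of the ``disjoint and dense'' reformulation via minimal points and $\overline{W}={\uparrow}W$.
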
 \begin{FACT}{:Normal spectral spaces and retraction maps}{NormalSpec} Normal spectral spaces are discussed in detail in \DST{sectionNormal}. A spectral space is normal (in the sense of topology) if and only if the closure of every point contains a unique closed point. Another way of saying this is that the inclusion map $X^{\max}\into X$ possesses a retraction $r:X\lra X^{\max}$ that preserves specialization. \par In such a space $X$, the map $r:X\lra X^{\max}$ that sends $x\in X$ to the unique closed point in $\overline{\{x\}}$ is a continuous, closed, and proper retraction of the embedding $X^{\max}\into X$ \cite[Proposition 3, p.~230]{CarCos1983}.\footnote{A ring whose spectrum is normal is called a \textit{Gel'fand ring} (aka a \textit{PM-ring}).} \par Now note that for a subset $S$ of $X$ we have $\Gen(\Spez(S))={\downarrow}{\uparrow}S=r^{-1}(r(S))$. Hence if $S$ is closed, then ${\downarrow}S={\downarrow}{\uparrow}S$ is closed as well. \end{FACT}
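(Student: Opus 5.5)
The plan is to prove the last paragraph of \ref{NormalSpec} directly from the order-theoretic description of $r$. I take the rest as given: the equivalence of normality with ``every $\overline{\{x\}}$ contains a unique closed point'' is in \DST{sectionNormal}, and continuity and closedness of $r$ are \cite[Proposition 3, p.~230]{CarCos1983}.

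Two facts from \ref{SpSp} will be used. First, $\Gen(\Spez(S))={\downarrow}{\uparrow}S$ holds by definition, since $\Spez={\uparrow}$ and $\Gen={\downarrow}$. Second, because $X$ is spectral, every point $y$ lies below some closed point, and by normality that closed point is unique; so $r(y)$ is the unique closed point with $y\le r(y)$. In particular $r$ is constant on any set of points sharing a common upper bound: if $a\le y$, then $r(y)\in\overline{\{y\}}\subseteq\overline{\{a\}}$ is a closed point above $a$, so uniqueness gives $r(a)=r(y)$.

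The first step is the inclusion ${\downarrow}{\uparrow}S\subseteq r^{-1}(r(S))$. Take $x\le y$ with $s\le y$ for some $s\in S$. By the constancy observation, $r(x)=r(y)=r(s)$, so $x\in r^{-1}(r(S))$.

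The second step is the reverse inclusion. Suppose $r(x)=r(s)$ with $s\in S$, and put $y:=r(s)$. Then $s\le y$, so $y\in{\uparrow}S$. Also $y=r(x)$ satisfies $x\le y$, so $x\in{\downarrow}{\uparrow}S$.

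The third step handles closed $S$. Closed sets are closed under specialization, so ${\uparrow}S=S$ and hence ${\downarrow}S={\downarrow}{\uparrow}S=r^{-1}(r(S))$. Since $r$ is a closed map, $r(S)$ is closed in $X^{\max}$. Since $r$ is continuous, $r^{-1}(r(S))$ is closed in $X$. This proves that ${\downarrow}S$ is closed.

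The only real input is the closedness of $r$, which we take from \cite{CarCos1983}. If that had to be proved from scratch, it would be the main obstacle. The approach would be to reduce to compactness: use the patch topology and the fact that $S$ closed implies $S$ is patch closed, hence patch compact. Then show $r(S)$ is closed in the compact Hausdorff space $X^{\max}$ by separating a closed point outside $r(S)$ from ${\uparrow}S$ with disjoint opens, which normality of $X$ provides.
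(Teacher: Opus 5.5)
Your proof is correct and matches the argument the paper leaves implicit. The identity ${\downarrow}{\uparrow}S=r^{-1}(r(S))$ follows from $r(x)$ being the unique closed point above $x$, and for closed $S$ the conclusion follows from ${\uparrow}S=S$ together with the properties of $r$ from Carral--Coste. One simplification: for closed $S$ you have $r(s)\in\overline{\{s\}}\subseteq S$, hence $r(S)=S\cap X^{\max}$, so only continuity of $r$ is needed; for the same reason $r$ is trivially a closed map, and your closing worry about proving closedness of $r$ from scratch is moot.
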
 \smallskip\noindent For an arbitrary spectral space $X$ we now characterize the existence of a continuous retraction $X\lra X^{\min}$ of the inclusion map $X^{\min}\into X$; notice that this condition is \textbf{not} equivalent to saying that $X_\inv$ is normal. \par \belowpdfbookmark{Gr\"atzer-Schmidt}{BookmarkQCOPStoneLattice} \begin{FACT}{Theorem}{QCOPStoneLattice} Let $L$ be a \bdl\ and $X$ its space of prime ideals. The following conditions are equivalent: \looseness=-1 \begin{enumerate}[{\rm (i)},itemsep=1ex] \item\labelx{QCOPStoneLatticeStone} $L$ is a Stone algebra; recall that $L$ is isomorphic to $\qcop(X)$. \item\labelx{QCOPStoneLatticeBasicDisconnected} For all $U\in \qcop(X)$ the closure $\overline{U}={\uparrow}U$ is open. \item\labelx{QCOPStoneLatticeUpCommutes} $X^{\min}$ is patch closed and for all $x,y,z \in X$, if $y,z \le x$ then there is $u \in X$ with $u \le y,z$. In other words, ${\uparrow}(D \cap E) = {\uparrow} D \cap {\uparrow} E$ for any two down-sets $D,E$. \item\labelx{QCOPStoneLatticeNormProCon} $X^{\min}$ is patch closed and $X_\inv$ is normal, i.e. for each $x \in X$ there is a unique $y \in X^{\min}$ such that $y \le x$. \item\labelx{QCOPStoneLatticeProjection} $X_\inv$ is normal and the map $X\lra X$, sending $x\in X$ to the unique minimal point specializing to $x$, is spectral. \item\labelx{QCOPStoneLatticeRetraction} There is a continuous retraction $X\lra X^{\min}$ of the inclusion map $X^{\min}\into X$. \par \end{enumerate} \par \end{FACT}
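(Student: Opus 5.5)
Our strategy is to run the chain (i)$\Leftrightarrow$(ii), (ii)$\Rightarrow$(iii)$\Rightarrow$(iv)$\Rightarrow$(v)$\Rightarrow$(vi)$\Rightarrow$(ii), working throughout in the Priestley space $(X_\con,\leq)$ and using the dictionary above.

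\emph{(i)$\Leftrightarrow$(ii).} If $L$ is a Stone algebra, it is pseudocomplemented. By \ref{pscompBasic}, ${\uparrow}U$ is then constructible for every $U\in\qcop(X)$. Since $U^*=X\setminus{\uparrow}U$ and $U^{**}=\interior(\overline U)$, the identity $U^*\cup U^{**}=X$ says that $\interior({\uparrow}U)\supseteq{\uparrow}U$, i.e. ${\uparrow}U$ is open. Conversely, if ${\uparrow}U$ is open, it is an open up-set whose complement $X\setminus{\uparrow}U$ is open. Being closed, ${\uparrow}U$ is inversely open; being open, it is inversely closed. Hence ${\uparrow}U$ is a clopen up-and-down set of $X_\con$, so it is constructible, and $L$ is pseudocomplemented by \ref{pscompBasic}. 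It then follows that $U^{**}={\uparrow}U$ and $U^*\cup U^{**}=X$.

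\emph{(ii)$\Rightarrow$(iii).} By \ref{pscompBasic}, $X^{\min}$ is compact and patch closed. Suppose $y,z\le x$ but ${\downarrow}y\cap{\downarrow}z=\0$. The sets ${\downarrow}y$ and ${\downarrow}z$ are patch closed down-sets, so their intersection is empty. By compactness and Priestley separation there are clopen down-sets $U\ni y$ and $V\ni z$ with $U\cap V=\0$. Since ${\uparrow}U$ is clopen and an up-set, it is also a down-set, so ${\uparrow}U\cap V={\uparrow}U\cap{\downarrow}V$. This set contains $x$ (because $x\ge y\in U$) and meets $V$ in a point of ${\downarrow}$-contact with $U$. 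More precisely, ${\uparrow}U$ is a down-set containing $x\ge z$, so $z\in{\uparrow}U$. Then $z\ge u$ for some $u\in U$, and $u\in V$ since $V$ is a down-set, contradicting $U\cap V=\0$. The reformulation with down-sets $D,E$ is immediate: only ${\uparrow}D\cap{\uparrow}E\subseteq{\uparrow}(D\cap E)$ needs proof, and it is exactly the three-point condition.

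\emph{(iii)$\Rightarrow$(iv).} Every point lies above some minimal point, by Zorn's lemma in a spectral space. If $y,z\in X^{\min}$ are both $\le x$, then (iii) gives $u\le y,z$, and minimality forces $y=u=z$. So each $x$ lies above a unique minimal point $m(x)$.

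\emph{(iv)$\Rightarrow$(v).} We must show $m$ is spectral, i.e. that $m^{-1}(U)={\uparrow}(U\cap X^{\min})$ is compact open for $U\in\qcop(X)$. This step is the main obstacle. Since $X^{\min}$ is patch closed, $U\cap X^{\min}$ is patch closed, so its ${\uparrow}$ is closed. The set $X\setminus m^{-1}(U)={\uparrow}(X^{\min}\setminus U)$ is also ${\uparrow}$ of a patch closed set, hence closed. Therefore $m^{-1}(U)$ is clopen in $X$, which makes it constructible and an open down-set, hence in $\qcop(X)$.

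\emph{(v)$\Rightarrow$(vi).} The map $m$ itself, with its image restricted to $X^{\min}$, is a continuous retraction.

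\emph{(vi)$\Rightarrow$(ii).} Let $r$ be a continuous retraction. We first note that $r$ preserves specialization, and since every $x$ lies above some $y\in X^{\min}$, we get $r(x)\ge r(y)=y$ with both points minimal, so $r(x)=y$. Thus $r=m$, and $X_\inv$ is normal. Moreover $X^{\min}$ is quasi-compact as a continuous image of $X$, so it is patch closed by \ref{pscompBasic}. For $U\in\qcop(X)$ we then have ${\uparrow}U={\uparrow}(U\cap X^{\min})=r^{-1}(U\cap X^{\min})$, and this set is open by continuity of $r$. That gives (ii).
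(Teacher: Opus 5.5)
Your proof is correct. It is organised differently from the paper's and is considerably more self-contained.

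\emph{What the paper does by hand, and what it cites.} The paper proves only three links directly: (iii)$\Leftrightarrow$(iv), via $X={\uparrow}X^{\min}$; (v)$\Rightarrow$(vi); and (vi)$\Rightarrow$(ii). The last is exactly your argument, showing ${\uparrow}U=s^{-1}(X^{\min}\cap U)$. For (iv)$\Leftrightarrow$(v) it cites an external lemma on normal spectral spaces applied to $X_\inv$. For the remaining links among (i)--(iv), it notes that (ii) makes $L$ pseudocomplemented and then appeals to Priestley's Propositions 2 and 3.

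\emph{What you do instead.} You close one cycle using only the PC-space characterization:
\begin{itemize}
\item (i)$\Leftrightarrow$(ii) follows from $U^*=X\setminus\overline U$ and $U^{**}=\interior(\overline U)$.
\item (ii)$\Rightarrow$(iii): two points with no common generalization are separated by disjoint compact opens, and the open set ${\uparrow}U$ is automatically a down-set.
\item (iv)$\Rightarrow$(v): $m^{-1}(U)={\uparrow}(U\cap X^{\min})$ and its complement ${\uparrow}(X^{\min}\setminus U)$ are both upward closures of patch closed sets. Hence both are closed in $X$, so $m^{-1}(U)$ is clopen and therefore compact open.
\end{itemize}
The paper's route is shorter and presents the theorem as a synthesis of known characterizations. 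Yours gives a complete, elementary proof. As a by-product, your (iv)$\Rightarrow$(v) argument also gives a direct proof of (iv)$\Rightarrow$(ii), since ${\uparrow}U={\uparrow}(U\cap X^{\min})$.

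Two phrasings should be tidied, though neither affects validity.
\begin{itemize}
\item In (ii)$\Rightarrow$(i), ``being closed, ${\uparrow}U$ is inversely open'' is false for closed sets in general; a closed point of a Boolean space is a counterexample. The correct and simpler reason is that sets open (resp.\ closed) in $X$ are patch open (resp.\ patch closed), so a clopen subset of $X$ is constructible.
\item In (ii)$\Rightarrow$(iii), the claim that ${\uparrow}U\cap V$ contains $x$ is unjustified, since $x$ need not lie in $V$. Just delete it. Your ``more precisely'' argument is complete on its own: $x\in{\uparrow}U$, ${\uparrow}U$ is a down-set, so $z\in{\uparrow}U$, which yields a point of $U\cap V$.
\end{itemize}
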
 \begin{proof} \ref{QCOPStoneLatticeNormProCon}$\Leftrightarrow$\ref{QCOPStoneLatticeUpCommutes}: Both implications follow from the fact that $X={\uparrow}(X^{\min})$. \par \smallskip\noindent \ref{QCOPStoneLatticeNormProCon}$\Leftrightarrow$\ref{QCOPStoneLatticeProjection} holds by \DST{XmaxProConNormal} applied to $X_\inv$. \par \smallskip\noindent \ref{QCOPStoneLatticeProjection}$\Ra $\ref{QCOPStoneLatticeRetraction} is clear. \par \smallskip\noindent \ref{QCOPStoneLatticeRetraction}$\Ra $\ref{QCOPStoneLatticeBasicDisconnected}. By assumption, there is a continuous map $s:X\lra X^{\min}$ whose restriction to $X^{\min}$ is the identity map. Take $U\in\qcop(X)$. Since $U$ is a down-set, we know that ${\uparrow}U={\uparrow}(X^{\min}\cap U)$. In order to show that ${\uparrow}U$ is open, we deploy continuity of $s$, so it is sufficient to prove ${\uparrow}U=s^{-1}(X^{\min}\cap U)$. \par $\subseteq$. If $U\ni u\leq x$, then take $y\in X^{\min}\cap {\downarrow}u$. By continuity of $s$ this implies $s(y)\leq s(x)$ and then $y=s(x)$ because $s$ is the identity on $X^{\min}$ and $X^{\min}$ has no proper specializations. \par $\supseteq $. If $s(x)\in X^{\min}\cap U$, then take $y\in X^{\min}$ with $y\leq x$; we see again that $y=s(y)=s(x)\in U$, thus $x\in {\uparrow}U$. \par \medskip\noindent Hence we know that \ref{QCOPStoneLatticeNormProCon}$\Rightarrow$\ref{QCOPStoneLatticeUpCommutes}$\Rightarrow$\ref{QCOPStoneLatticeProjection}$\Rightarrow$\ref{QCOPStoneLatticeRetraction}$\Rightarrow $\ref{QCOPStoneLatticeBasicDisconnected}. But \ref{QCOPStoneLatticeBasicDisconnected} implies that $L$ is pseudocomplemented, using \ref{pscompBasic}. Therefore, for the remaining implications we may assume that $L$ is pseudocomplemented (and $X^{\min}$ is patch closed). \par Now we may refer to \cite{Priest1974} which under this assumption has already proved the equivalences of \ref{QCOPStoneLatticeStone},\ref{QCOPStoneLatticeBasicDisconnected}, \ref{QCOPStoneLatticeUpCommutes}, and \ref{QCOPStoneLatticeNormProCon}; see Propositions 2 and 3 in that paper. \end{proof} \begin{FACT}{:Remark}{QCOPStoneLatticeApply} \begin{enumerate}[{\rm (i)}] \item Gr\"atzer and Schmidt in \cite{GraSch1957} resolve G.~Birkhoff's problem no.~70 \cite[p.~149]{Birkho1948} by showing that a pseudocomplemented distributive lattice is a Stone algebra if and only if all distinct minimal prime ideals are coprime. The equivalence of \ref{QCOPStoneLatticeStone} and \ref{QCOPStoneLatticeNormProCon} in \ref{QCOPStoneLattice} implies this result. \item The equivalence of \ref{QCOPStoneLatticeStone} and \ref{QCOPStoneLatticeUpCommutes} in \ref{QCOPStoneLattice} also dates back to \cite{DumLem1959}, providing an alternate solution of Birkhoff's problem (see the paragraph after Theorem 3 in their paper). \item In \cite[Proposition~2.37]{ChaZak1997} posets satisfying the first order condition in \ref{QCOPStoneLattice}\ref{QCOPStoneLatticeUpCommutes} are called \textit{strongly directed}. This condition is also known as the \textit{confluence} or \textit{Church-Rosser} condition \cite[Example~3.44]{BldRVe2001} and plays an important role in Kripke semantics for modal logic. \end{enumerate} \end{FACT}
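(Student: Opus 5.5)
Of the three items, only the first makes a mathematical claim that needs checking: that the Gr\"atzer--Schmidt theorem follows from \ref{QCOPStoneLattice}. Items (ii) and (iii) are bibliographic and terminological. For (ii) one only has to note that the first-order condition in \ref{QCOPStoneLattice}\ref{QCOPStoneLatticeUpCommutes} is literally the condition appearing in \cite{DumLem1959}. For (iii) one notes that it is the strong directedness / confluence condition of the cited sources, written with $\leq$ read as specialization.

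For (i), the plan is to translate ``distinct minimal prime ideals are coprime'' into the language of $X=\PrimI(L)$. Specialization in $X$ is inclusion, so the minimal prime ideals of $L$ are exactly the points of $X^{\min}$. Two ideals $\Dp,\Dq$ are coprime when the ideal they generate is $L$. By the prime ideal theorem for \bdl s, this holds iff no prime ideal contains both $\Dp$ and $\Dq$, i.e. iff there is no $x\in X$ with $\Dp,\Dq\leq x$. Hence ``distinct minimal primes are coprime'' says: every $x\in X$ lies above at most one point of $X^{\min}$. Every prime ideal contains a minimal prime ideal (Zorn's lemma applied to chains of primes below $x$, whose intersections are prime), equivalently $X={\uparrow}X^{\min}$. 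So the condition is equivalent to: every $x$ lies above exactly one minimal point. This is the normality of $X_\inv$ as phrased in \ref{QCOPStoneLattice}\ref{QCOPStoneLatticeNormProCon}.

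Now assume $L$ is pseudocomplemented. Then \ref{pscompBasic}\ref{pscompBasicCharPCcomp} gives that $X^{\min}$ is \qc, and hence patch closed. Therefore, for pseudocomplemented $L$, condition \ref{QCOPStoneLattice}\ref{QCOPStoneLatticeNormProCon} is equivalent to the coprimality of distinct minimal primes. The equivalence \ref{QCOPStoneLatticeStone}$\Leftrightarrow$\ref{QCOPStoneLatticeNormProCon} of \ref{QCOPStoneLattice} then yields exactly the Gr\"atzer--Schmidt characterization.

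The only step needing care is the identification of coprimality with the nonexistence of a common prime upper bound. This uses the distributive prime ideal theorem: an ideal $I\neq L$ extends to a prime ideal, avoiding the filter $\{\top\}$. I expect this to be the main, though minor, obstacle. Everything else is a direct reading of \ref{QCOPStoneLattice} together with \ref{pscompBasic}.
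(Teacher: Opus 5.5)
Your proposal is correct and is the argument the paper has in mind. The paper only asserts that the equivalence of conditions (i) and (iv) of \ref{QCOPStoneLattice} yields the Gr\"atzer--Schmidt theorem, and you supply the implicit translation steps. These are: coprimality of distinct minimal primes amounts to normality of $X_\inv$, via the prime ideal theorem and $X={\uparrow}X^{\min}$; and, for pseudocomplemented $L$, patch closedness of $X^{\min}$ is automatic by \ref{pscompBasic}.
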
 \begin{FACT}{:Application}{QCOPStoneLatticeApplyStone} If $X$ is the prime spectrum of a reduced commutative ring $A$ and $U\in \qcop(X)$, then $U$ is of the form $D(f_1)\cup\ldots\cup D(f_n)$ for some $n\in \N$ and $f_i\in A$. Therefore, condition \ref{QCOPStoneLatticeBasicDisconnected} of \ref{QCOPStoneLattice} is equivalent to saying that for each $f\in A$ the closure of $D(f)$ is open. Now one checks without difficulty that the closure of $D(f)$ is $V(\Ann(f))$ and clopen subsets of $X$ are of the form $V(e\mal A)$ for some idempotent element $e\in A$. Since $\Ann(f)$ and $e\mal A$ are radical ideals in any reduced ring\footnote{If $h^2\in e\mal A$, then $h^2(1-e)^2=0$, so $h(1-e)=0$ and $h=eh\in e\mal A$.}, condition \ref{QCOPStoneLatticeBasicDisconnected} of \ref{QCOPStoneLattice} is equivalent to saying that for every $f\in A$ there is an idempotent element $e\in A$ with $\Ann(f)=e\mal A$. Reduced rings with this property are called \notion[Baer ring]{Baer rings} in \cite{Kist1974}, which we adopt here (other authors call such rings \textit{weak Baer}). \par Hence for the prime spectrum $X$ of a reduced commutative ring $A$, \ref{QCOPStoneLattice} implies that $A$ is a Baer ring if and only if $X^{\min}$ is a retract of $X$ (reproving \cite[Theorem 2]{Kist1974}), if and only if $\qcop(X)$ is a Stone algebra. \end{FACT}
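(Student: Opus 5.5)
The plan is to reduce everything to Theorem \ref{QCOPStoneLattice}, using the equivalences \ref{QCOPStoneLatticeStone}$\Leftrightarrow$\ref{QCOPStoneLatticeBasicDisconnected}$\Leftrightarrow$\ref{QCOPStoneLatticeRetraction}. It then only remains to show that, for a reduced ring $A$ with $X=\Spec(A)$, condition \ref{QCOPStoneLatticeBasicDisconnected} is equivalent to $A$ being a Baer ring. The translation goes through the radical-ideal/closed-set bijection recalled in the Fact on prime spectra of commutative rings.

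\emph{Step 1 (reduction to basic opens).} Every $U\in\qcop(X)$ is a finite union $D(f_1)\cup\dots\cup D(f_n)$. Closure commutes with finite unions, and finite unions of open sets are open. Hence ${\uparrow}U=\overline U$ is open for all $U\in\qcop(X)$ iff $\overline{D(f)}$ is open for every single $f\in A$.

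\emph{Step 2 (computing $\overline{D(f)}$).} The closure of $D(f)$ is $V(I)$, where $I=\bigcap D(f)$ is the intersection of all primes not containing $f$. For $g\in A$ we have $g\in I$ iff $gf$ lies in every prime ideal: a prime containing $f$ contains $gf$ trivially, and a prime not containing $f$ contains $gf$ iff it contains $g$. Thus $g\in I$ iff $gf$ is nilpotent, iff $gf=0$ because $A$ is reduced. So $I=\Ann(f)$ and $\overline{D(f)}=V(\Ann(f))$.

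\emph{Step 3 (clopen sets and idempotents).} We need that the clopen subsets of $X$ are exactly the sets $V(eA)$ with $e$ idempotent. One direction is clear, since $V(e)=D(1-e)$. For the converse, take a clopen $C$. Both $C$ and $X\setminus C$ are compact and closed, so $C=V(J)$ and $X\setminus C=V(J')$ with $V(J+J')=\emptyset$ and $V(JJ')=X$. Hence $J+J'=A$ and $JJ'$ is nil, so $JJ'=0$ as $A$ is reduced. Writing $1=a+b$ with $a\in J$, $b\in J'$ gives $ab=0$. It follows that $a=a^2$ and $b=1-a$, so $a$ is an idempotent with $V(aA)\supseteq C$ and $D(a)\supseteq X\setminus C$, whence $C=V(aA)$. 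This idempotent-lifting step is the only place needing real care, and I expect it to be the main (though standard) obstacle.

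\emph{Step 4 (conclusion).} The ideals $\Ann(f)$ and $eA$ are radical in a reduced ring; the footnote argument handles $eA$, and for $\Ann(f)$ note that $h^2f=0$ implies $(hf)^2=0$, hence $hf=0$. Since radical ideals correspond bijectively to closed sets, $V(\Ann(f))=V(eA)$ iff $\Ann(f)=eA$. Therefore $\overline{D(f)}$ is open for all $f$ iff every annihilator $\Ann(f)$ is generated by an idempotent, i.e.\ iff $A$ is a Baer ring. Combining this with Theorem \ref{QCOPStoneLattice} yields the stated equivalences: $A$ is a Baer ring iff $X^{\min}$ is a retract of $X$ iff $\qcop(X)$ is a Stone algebra.
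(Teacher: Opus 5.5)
Your proposal is correct and follows the same route as the paper. You reduce condition \ref{QCOPStoneLatticeBasicDisconnected} of \ref{QCOPStoneLattice} to the basic opens $D(f)$, identify $\overline{D(f)}=V(\Ann(f))$ and the clopen sets as the sets $V(e\mal A)$, use that both kinds of ideals are radical, and then invoke \ref{QCOPStoneLatticeStone}$\Leftrightarrow$\ref{QCOPStoneLatticeBasicDisconnected}$\Leftrightarrow$\ref{QCOPStoneLatticeRetraction}; the details you supply for the steps the paper leaves as ``checks without difficulty'' are sound (in Step 3 only closedness of $C$ and $X\setminus C$ is used, not compactness).
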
 \medskip\noindent Theorem \ref{QCOPStoneLattice} can also be formulated in the language of the inverse topology (utilizing also Theorem \ref{pscompBasic}). We only record those statements that are used later on. \begin{FACT}{Corollary}{QCCLStoneLattice} For a spectral space $X$, the following are equivalent: \begin{enumerate}[{\rm (i)}] \item\labelx{QCCLStoneLatticeStone} $\qccl(X)$ is a Stone algebra. \item\labelx{QCCLStoneLatticeBasicDisconnected} For all $C\in \qccl(X)$ the closure ${\downarrow}C$ of $C$ in the inverse topology is clopen. \item\labelx{QCCLStoneLatticeNormalCoHeyting} $X$ is normal and $\qccl(X)$ is pseudocomplemented. \item\labelx{QCCLStoneLatticeNormProCon} $X$ is normal and $X^{\max}$ is patch closed. \par \end{enumerate} \end{FACT}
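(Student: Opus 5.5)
The plan is to apply Theorem \ref{QCOPStoneLattice} to the inverse space $X_\inv$, together with Theorem \ref{pscompBasic}, and translate each item back into the language of $X$. Since $X_\inv$ is spectral, we have $\qcop(X_\inv)=\qccl(X)$ and $(X_\inv)_\inv=X$. The specialization order of $X_\inv$ is the reverse of that of $X$. Hence $(X_\inv)^{\min}=X^{\max}$. Moreover, the closure in $X_\inv$ of a patch closed set $C$ is ${\downarrow}C$, where ${\downarrow}$ refers to the order of $X$. The patch topology is unchanged, so ``patch closed'' has the same meaning for $X$ and for $X_\inv$.

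The individual translations are as follows.
\begin{enumerate}[{\rm (a)}]
\item Item (i) of the corollary is item \ref{QCOPStoneLatticeStone} of \ref{QCOPStoneLattice} for $X_\inv$.
\item Item (ii) is item \ref{QCOPStoneLatticeBasicDisconnected} for $X_\inv$. For $C\in\qccl(X)=\qcop(X_\inv)$ the closure in $X_\inv$ is ${\downarrow}C$, and ``open in $X_\inv$'' is the required condition. We must see that this set is in fact clopen in $X_\inv$. It is closed in $X_\inv$ by definition, being a closure. So the condition ``open'' in \ref{QCOPStoneLatticeBasicDisconnected} is the same as ``clopen'', and (i)$\Leftrightarrow$(ii) follows.
\item Item (iv) is item \ref{QCOPStoneLatticeNormProCon} for $X_\inv$. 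It reads: $(X_\inv)^{\min}=X^{\max}$ is patch closed and $(X_\inv)_\inv=X$ is normal. This is exactly (iv), so (i)$\Leftrightarrow$(iv).
\end{enumerate}

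It remains to bring in (iii). For (iv)$\Rightarrow$(iii), (iv) implies (i) by the above, and a Stone algebra is in particular pseudocomplemented. Normality of $X$ is part of (iv). For (iii)$\Rightarrow$(iv), apply Theorem \ref{pscompBasic} to $X_\inv$. Pseudocomplementation of $\qccl(X)=\qcop(X_\inv)$ gives, via \ref{pscompBasicCharPCcomp}, that $(X_\inv)^{\min}=X^{\max}$ is quasi-compact in $X_\inv$ and even patch closed. Together with normality of $X$ this is (iv).

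The only step needing care is this last use of \ref{pscompBasic}\ref{pscompBasicCharPCcomp}: one must confirm that the patch closedness of $(X_\inv)^{\min}$ it provides is patch closedness with respect to $(X_\inv)_\con=X_\con$, which holds since the patch topologies agree. The rest is bookkeeping of the order reversal.
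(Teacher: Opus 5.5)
Your proposal is correct and follows the route the paper indicates: the corollary is Theorem \ref{QCOPStoneLattice} read for $X_\inv$ (via $\qcop(X_\inv)=\qccl(X)$, $(X_\inv)^{\min}=X^{\max}$, $(X_\inv)_\inv=X$ and the common patch topology), with Theorem \ref{pscompBasic} applied to $X_\inv$ giving patch closedness of $X^{\max}$ for (iii)$\Rightarrow$(iv). In (ii) it does not matter whether ``clopen'' is read in $X$ or in $X_\inv$, because the two spaces have the same clopen subsets.
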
 \begin{FACT}{:Relative pseudocomplementation}{} Let $L$ be a bounded distributive lattice and $a,b \in L$. We recall that the {\bf relative pseudocomplement} of $a$ with respect to $b$ is the largest element of the set \( \{ x \in L \mid a \wedge x \le b \}, \) \cite{RasSik1963,BalDwi1974}, denoted by $a\to b$; another name is {\bf implication}. If $a \to b$ exists for all $a,b \in L$, then $L$ is called {\bf relatively pseudocomplemented } or a {\bf Heyting algebra}. The dual spaces of Heyting algebras are called \notion{Esakia spaces} (or \textit{Heyting spaces} in \cite[section 8.3]{DiScTr2019}). They were first described by Esakia \cite{Esakia1974} and further studied by numerous authors. The next theorem is well known (see \cite{Esakia1974,Esakia2019,BBGK2010,BeGaJi2013,DiScTr2019}). \end{FACT}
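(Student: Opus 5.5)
The statement above is a definitional remark, so the plan is to check the few implicit claims it makes and to set up what the announced theorem on Esakia spaces will need. First, $a\to b$ is well defined whenever it exists: a largest element of $\{x\in L\mid a\wedge x\le b\}$ is unique by antisymmetry. Second, I will record the standard reformulation
\[
x\le a\to b \iff a\wedge x\le b .
\]
This means $a\wedge(-)$ has right adjoint $a\to(-)$. Third, taking $b=\perp$ recovers the notion of \ref{pscompBasic}, since $a\to\perp=a^*$. Hence every Heyting algebra is pseudocomplemented, and its dual space is a PC-space by Theorem \ref{pscompBasic}.

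For the dual description, I would pass to $L\cong\qcop(X)$ and compute $U\to V$ for $U,V\in\qcop(X)$. In the frame $\CO(X)$ the implication always exists and equals
\[
U\Rightarrow V=\interior\bigl((X\setminus U)\cup V\bigr).
\]
In Priestley language, $X\setminus U$ is an up-set and $V$ is a down-set, so this set is $X\setminus{\uparrow}(U\setminus V)$, the largest open down-set disjoint from $U\setminus V$. Any $W\in\qcop(X)$ with $U\cap W\subseteq V$ lies inside $U\Rightarrow V$. Therefore $U\to V$ exists in $\qcop(X)$ if and only if $X\setminus{\uparrow}(U\setminus V)$ is compact open, i.e.\ if and only if ${\uparrow}(U\setminus V)$ is constructible.

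Every constructible set is a finite union of sets $U\setminus V$ with $U,V\in\qcop(X)$, and ${\uparrow}$ commutes with finite unions. So $L$ is a Heyting algebra if and only if ${\uparrow}K$ is clopen in $X_\con$ for every $K\in\CK(X)$. In Priestley terms (with the paper's reading of $\le$), this says the up-closure of every clopen set is clopen, which is exactly the Esakia condition.

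The only delicate point is the ``only if'' direction for an arbitrary constructible $K$. There one must show that the pieces ${\uparrow}(U_i\setminus V_i)$ are individually constructible; this follows from the first step applied to each pair $U_i,V_i$. I expect no real obstacle beyond this bookkeeping. The substantive theorem is cited from \cite{Esakia1974,BBGK2010,DiScTr2019}.
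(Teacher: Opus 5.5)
Your proposal is correct, and for the statement itself it agrees with the paper. The paper treats this as a definition with no proof. The only implicit claim is that a largest element of $\{x\in L\mid a\wedge x\le b\}$ is unique, so $a\to b$ is well defined. Your remarks that $x\le a\to b\iff a\wedge x\le b$ and that $a\to\perp=a^*$ are also fine.

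Where you go beyond the paper is the dual description. The paper does not derive it; it cites \cite{Esakia1974} for \ref{HeytingEasyII}\ref{HeytingEasyIIQCOPheyting}$\Leftrightarrow$\ref{HeytingEasyIISpaceHeyting}. You prove it directly. You identify $U\to V$ with the frame implication $\interior\bigl((X\setminus U)\cup V\bigr)=X\setminus{\uparrow}(U\setminus V)$ in $\CO(X)$, and then reduce arbitrary constructible sets to differences $U\setminus V$. This argument is self-contained and lands exactly on the paper's formulation (ii), since ${\uparrow}K$ is the closure of the patch closed set $K$.

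Two points should be tightened.

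\emph{First,} the identity $\interior\bigl((X\setminus U)\cup V\bigr)=X\setminus{\uparrow}(U\setminus V)$ does not follow from $X\setminus U$ being an up-set and $V$ a down-set. The reason is that $(X\setminus U)\cup V$ is the complement of $U\setminus V$, so its interior is the complement of the closure of $U\setminus V$. Since $U\setminus V$ is patch closed, that closure is ${\uparrow}(U\setminus V)$ by item (2) of the dictionary.

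\emph{Second,} the ``only if'' in ``$U\to V$ exists in $\qcop(X)$ iff $X\setminus{\uparrow}(U\setminus V)\in\qcop(X)$'' needs more than the inclusion you state. You also need that $\qcop(X)$ is a basis of $X$. Then $X\setminus{\uparrow}(U\setminus V)$ is the union of all $W\in\qcop(X)$ with $U\cap W\subseteq V$, so a largest such $W$ must equal it.
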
 \begin{FACT}{Theorem}{HeytingEasyII} Let $L$ be a bounded distributive lattice and $X$ its dual spectral space. The following are equivalent: \begin{enumerate}[{\rm (i)},itemsep=0ex] \item \labelx{HeytingEasyIIQCOPheyting} $L$ is a Heyting algebra. \item \labelx{HeytingEasyIISpaceHeyting} The closure of any constructible subset of $X$ is constructible; recall that the closure of any patch closed set is the upset that it generates. \item\labelx{HeytingEasyIISemiOnClosed} Every closed and constructible subspace of $X$ is a PC-space. \item\labelx{HeytingEasyIIInvClosed} For any $S\subseteq X$, the closure of $S$ for the inverse topology is the patch closure of ${\downarrow }S$. (Notice that in any spectral space, the closure of a subset is the down-set generated by the patch closure of that set.) \par \end{enumerate} \end{FACT}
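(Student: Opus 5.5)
We prove the cycle of equivalences via Priestley duality. Let $P=(X_\con,\le)$ be the Priestley space of $L$. Identify $L$ with $\qcop(X)$, the clopen down-sets of $P$, and the Boolean envelope with $\CK(X)$, the clopen sets of $P$.

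\ref{HeytingEasyIIQCOPheyting}$\Leftrightarrow$\ref{HeytingEasyIISpaceHeyting}. Suppose first that ${\uparrow}K$ is constructible for every $K\in\CK(X)$. For $U,V\in\qcop(X)$ put
\[
U\to V:=X\setminus{\uparrow}(U\setminus V).
\]
This set is the complement of a constructible up-set, so it is a constructible down-set. By the dictionary it is open and constructible, hence lies in $\qcop(X)$. A down-set $W$ satisfies $W\cap U\subseteq V$ iff $W$ misses $U\setminus V$, iff $W$ misses ${\uparrow}(U\setminus V)$. So $U\to V$ is the largest such $W$.

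Conversely, assume $L$ is Heyting. Every constructible $K$ is a finite union $\bigcup_i (U_i\setminus V_i)$ with $U_i,V_i\in\qcop(X)$, and ${\uparrow}$ commutes with finite unions. So it suffices to show that ${\uparrow}(U\setminus V)=X\setminus(U\to V)$. The inclusion $\subseteq$ is immediate from $(U\to V)\cap U\subseteq V$, since $U\to V$ is a down-set.

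For $\supseteq$, take $x\notin{\uparrow}(U\setminus V)$. Then $D:={\downarrow}x$ is a patch closed down-set missing $U\setminus V$. By Priestley separation and compactness, $D$ is the intersection of the clopen down-sets containing it. Hence the compact set $U\setminus V$ is covered by finitely many complements of such sets. This yields a single clopen down-set $W\supseteq D$ with $W\cap(U\setminus V)=\0$. Then $W\subseteq U\to V$, so $x\in U\to V$. This compactness-separation step is the main technical point of the proof.

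\ref{HeytingEasyIISpaceHeyting}$\Leftrightarrow$\ref{HeytingEasyIISemiOnClosed}. Let $Y$ be closed and constructible. Then $Y$ is a spectral subspace, and its compact opens are the sets $U\cap Y$ with $U\in\qcop(X)$. Closures in $Y$ are closures in $X$, because $Y$ is closed. So by \ref{pscompBasic}(ii), $Y$ is a PC-space iff ${\uparrow}(U\cap Y)$ is constructible for all $U\in\qcop(X)$.

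Every constructible set is a finite union of sets $U\cap Y$, where $U$ is compact open and $Y=X\setminus V$ is closed constructible. Therefore \ref{HeytingEasyIISpaceHeyting} gives \ref{HeytingEasyIISemiOnClosed}, and \ref{HeytingEasyIISemiOnClosed} applied to each such piece gives \ref{HeytingEasyIISpaceHeyting}.

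\ref{HeytingEasyIISpaceHeyting}$\Leftrightarrow$\ref{HeytingEasyIIInvClosed}. In $X_\inv$ the closed sets are the patch closed up-sets. Hence the inverse closure of $S$ is the intersection of the constructible open up-sets $X\setminus C$, $C\in\qcop(X)$, that contain $S$.

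Assume \ref{HeytingEasyIISpaceHeyting}, and let $x$ lie outside the patch closure of ${\downarrow}S$. Pick a constructible $K\ni x$ disjoint from ${\downarrow}S$. Then ${\uparrow}K$ is constructible and is an up-set disjoint from $S$. Since it is open in $X_\inv$, $x$ is not in the inverse closure of $S$. The reverse inclusion holds because the inverse closure is a patch closed up-set containing $S$; this is the inclusion needed as stated, with "down" and "up" read appropriately for the inverse topology. In this paper's convention the inverse closure is ${\downarrow}$ of the patch closure, so ${\uparrow}$ and ${\downarrow}$ must be tracked carefully in this step.

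For the converse, apply \ref{HeytingEasyIIInvClosed} with $S=X\setminus{\uparrow}K$ for constructible $K$. This shows that ${\uparrow}K$ is inversely open. It is also patch closed, since it is the up-closure of a patch closed set. Being an inversely open, patch closed up-set, it is constructible.
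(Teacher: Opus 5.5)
Your proofs of (i)$\Leftrightarrow$(ii) and (ii)$\Leftrightarrow$(iii) are correct. They are self-contained versions of what the paper handles by citation: Esakia for (i)$\Leftrightarrow$(ii), and "straightforward" for (ii)$\Leftrightarrow$(iii). The genuine gap is in (iv)$\Rightarrow$(ii).

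\textbf{The gap in (iv)$\Rightarrow$(ii).} With $S=X\setminus{\uparrow}K$ you already have ${\downarrow}S=S$, so (iv) only gives $\mathrm{cl}_{\inv}(S)=\overline{S}^{\con}$. To conclude that ${\uparrow}K$ is inversely open you need $\overline{S}^{\con}=S$. That says ${\uparrow}K$ is patch open, which is exactly what you are trying to prove. Your argument never uses that $K$ is patch open, and without that the inference is false. Take $L$ an infinite Boolean algebra, so (iv) holds and $X_\inv=X$. Let $x$ be non-isolated and $K=\{x\}$, which is patch closed but not constructible. Then ${\uparrow}K=\{x\}$ is not inversely open.

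The missing step is as follows. Since $\mathrm{cl}_{\inv}(S)$ is closed in $X_\inv$, (iv) says that $\overline{S}^{\con}$ is a down-set. Suppose $y\in\overline{S}^{\con}\cap{\uparrow}K$, and pick $k\in K$ with $k\le y$. Then $k\in\overline{S}^{\con}$. But $K$ is a patch open neighbourhood of $k$ disjoint from $S$, because $K\subseteq{\uparrow}K$. This is a contradiction, so $S$ is patch closed. Hence ${\uparrow}K$, which is patch closed anyway, is patch clopen, i.e. constructible. You do not need the detour through inverse compactness.

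\textbf{Orientation error in (ii)$\Rightarrow$(iv).} You have the orientation of $X_\inv$ backwards. With $x\le y\iff y\in\overline{\{x\}}$, the closed sets of $X$ are the patch closed up-sets. The closed sets of $X_\inv$ are the patch closed down-sets, so
$\mathrm{cl}_{\inv}(S)={\downarrow}\overline{S}^{\con}=\bigcap\{U\in\qcop(X)\mid S\subseteq U\}$.
Your formula using the sets $X\setminus C$ computes the closure in $X$, not in $X_\inv$. Likewise, your ``reverse inclusion'' argument via a patch closed up-set containing $S$ does not work as written: an up-set containing $S$ need not contain ${\downarrow}S$.

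With the correct description, $\overline{{\downarrow}S}^{\con}\subseteq\mathrm{cl}_{\inv}(S)$ is immediate, since $\mathrm{cl}_{\inv}(S)$ is a patch closed down-set containing $S$. Your argument for the other inclusion is fine: ${\uparrow}K$ is a constructible up-set, hence open in $X_\inv$; it contains $x$ and misses ${\downarrow}S\supseteq S$.
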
 \begin{proof} The equivalence of \ref{HeytingEasyIIQCOPheyting} and \ref{HeytingEasyIISpaceHeyting} is in \cite[Theorem 1]{Esakia1974}. The equivalence of \ref{HeytingEasyIISpaceHeyting} and \ref{HeytingEasyIIInvClosed} is in \cite[Theorem 3.1.2]{Esakia2019}, which is a translation of the 1985 Russian original. The equivalence of \ref{HeytingEasyIISpaceHeyting} and \ref{HeytingEasyIISemiOnClosed} is straightforward. \end{proof} \begin{FACT}{:Summary}{pseudoVariantsDiagram} The relationship between Boolean algebras, Heyting algebras, Stone algebras, and pseudocomplemented lattices can be summarized as shown in the diagram below. Recall that ${\color{black}X}=\PrimI({\color{black}L})$, ${\color{black}L}\cong \qcop({\color{black}X})=\{\text{compact open sets}\}$ and $\CK({\color{black}X})=\{\text{constructible sets}\}$. Let $\mathrm{Cl}:\powerset(X)\lra \powerset(X)$ be the closure operator of the powerset of $X$ given by the topology of $X$. Also recall that $L$ is a Boolean algebra if and only if every compact open set is closed. \begin{center} \scalebox{0.75}{ \begin{tikzcd}[row sep=2ex,column sep=2ex,ampersand replacement=\&] \& \begin{tabular}{c} {\color{black}\text{Heyting}} \\ {\color{black}\text{algebra}}\\ {\color{black}$\mathrm{Cl}(\CK(X))\subseteq \CK(X)$}\end{tabular} \ar[rd,Rightarrow] \& \&[3ex] \ \\ \begin{tabular}{c} {\color{black}\text{Boolean}} \\ {\color{black}\text{algebra}}\\ {\color{black}$\mathrm{Cl}|_{\CK(X)}=\id_{\CK(X)}$}\end{tabular} \ar[ru,Rightarrow ]\ar[rd,Rightarrow ] \& \& \begin{tabular}{c} {\color{black}\text{pseudo}} \\ {\color{black}\text{complemented}}\\ {\color{black}$\mathrm{Cl}(\qcop(X))\subseteq \CK(X)$}\end{tabular} \ar[r,Leftrightarrow] \& \begin{tabular}{c} {\color{black}$X^{\min}$\text{ compact and}}\\ {\color{black}$U\in\qcop(X)\Ra \interior({\overline{U}})\in\qcop(X)$}\end{tabular}\\ \& \begin{tabular}{c} {\color{black}\text{Stone}} \\ {\color{black}\text{lattice}}\\ {\color{black}$\mathrm{Cl}(\qcop(X))\subseteq \qcop(X)$}\end{tabular}\ar[ru,Rightarrow ] \& \& \\ \end{tikzcd} } \end{center} \end{FACT}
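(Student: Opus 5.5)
The statement to be justified here is the Summary \ref{pseudoVariantsDiagram}. It consists of four topological labels, one for each class of lattices, the equivalence on the right, and the four implication arrows. My plan is to first identify each label with a characterization proved earlier. The arrows then become trivial inclusions between the conditions on the closure operator $\mathrm{Cl}$.

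The labels are handled as follows.
\begin{enumerate}[{\rm (i)}]
\item The Heyting label $\mathrm{Cl}(\CK(X))\subseteq\CK(X)$ is exactly \ref{HeytingEasyII}\ref{HeytingEasyIISpaceHeyting}.
\item The pseudocomplemented label $\mathrm{Cl}(\qcop(X))\subseteq\CK(X)$ is \ref{pscompBasic}(ii).
\item The equivalence displayed to its right is \ref{pscompBasic}(ii)$\Leftrightarrow$\ref{pscompBasicCharPC}.
\item The Stone label $\mathrm{Cl}(\qcop(X))\subseteq\qcop(X)$ is \ref{QCOPStoneLattice}\ref{QCOPStoneLatticeBasicDisconnected}.
\end{enumerate}

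The only label without a direct citation is the Boolean one, namely that $L$ is Boolean iff $\mathrm{Cl}|_{\CK(X)}=\id_{\CK(X)}$. I will use the recalled fact that $L$ is Boolean iff every $U\in\qcop(X)$ is closed.
\begin{itemize}
\item If $\mathrm{Cl}$ is the identity on $\CK(X)$, then every $U\in\qcop(X)\subseteq\CK(X)$ satisfies $\overline U=U$, so $U$ is closed.
\item Conversely, suppose every compact open set is closed. Its complement lies in $\qccl(X)$ and is open. Every constructible set is a finite union of sets $U\cap(X\setminus V)$ with $U,V\in\qcop(X)$. Each such set is then an intersection of two closed sets, so it is closed, and a finite union of them is closed. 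Hence $\overline C=C$ for all $C\in\CK(X)$.
\end{itemize}
This is the one step that needs an actual argument. The only slightly delicate point is the normal form of constructible sets, which is standard since $\qcop(X)$ is a lattice generating $\CK(X)$.

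With the labels in place, each arrow follows from a trivial inclusion.
\begin{itemize}
\item Boolean $\Rightarrow$ Heyting, since $\mathrm{Cl}(C)=C\in\CK(X)$ for $C\in\CK(X)$.
\item Boolean $\Rightarrow$ Stone, since for $U\in\qcop(X)\subseteq\CK(X)$ we get $\mathrm{Cl}(U)=U\in\qcop(X)$.
\item Heyting $\Rightarrow$ pseudocomplemented, because $\qcop(X)\subseteq\CK(X)$ and so the condition on $\CK(X)$ restricts to the one on $\qcop(X)$.
\item Stone $\Rightarrow$ pseudocomplemented, likewise because $\qcop(X)\subseteq\CK(X)$.
\end{itemize}
These last two implications are also clear algebraically: $a^*=a\to\perp$, and Stone algebras are pseudocomplemented by definition. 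I expect no real obstacle beyond the Boolean label.
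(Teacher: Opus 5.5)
Your proposal is correct and takes the same approach the paper intends for this summary. The paper gives no separate proof: each label is one of the earlier characterizations (\ref{HeytingEasyII}, \ref{pscompBasic}, \ref{QCOPStoneLattice}), the Boolean label follows from the normal form of constructible sets as you argue, and the arrows are then just inclusions between conditions. One small step is left implicit: \ref{QCOPStoneLattice}\ref{QCOPStoneLatticeBasicDisconnected} only says that $\overline{U}$ is open, whereas the Stone label requires $\overline{U}\in\qcop(X)$; these agree because $\overline{U}$ is closed in the compact space $X$ and hence compact.
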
 \section{Forests and root systems} \labelx{sectionTreeRoot} \noindent To characterize pseudocomplementation in prime spectra of rings of continuous functions, we will need to look at spectral root systems, i.e.~spectral spaces for which every patch closed subset is normal. In this section we collect some information about such spaces and consequences of section \ref{sectionPC} for the pseudocomplemented context.\looseness=-1 \begin{FACT}{:Root systems and forests}{} \begin{enumerate}[{\rm (i)}] \item A spectral space is a \textbf{spectral root system} if the closure of every point is a specialization chain. In older literature this is called \textit{completely normal}, but this conflicts with the standard use of ``completely normal'' in topology, see \DST[(iv)]{herNormalNotComplNormal} and the footnote of \DST{inverseTopOfSinfty}. If we read specialization as a partial order $\leq $, this means that the specialization poset defined by $X$ is a root system. By \DST{rootSysCharact1} a spectral space is a spectral root system iff every patch closed subspace is a normal topological space.\looseness=-1 \item Dually, $X$ is a \notion[]{spectral forest} if $X_\inv$ is a spectral root system. \end{enumerate} \end{FACT}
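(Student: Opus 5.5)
The plan is to prove the characterization quoted from \DST{rootSysCharact1}: a spectral space $X$ is a spectral root system if and only if every patch closed subspace of $X$ is normal. Two standard facts do most of the work.

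First, a patch closed subspace $Y\subseteq X$ is itself a spectral space. Its specialization order is the restriction of $\leq$, since the closure of $y$ in $Y$ is $\overline{\{y\}}\cap Y={\uparrow}y\cap Y$.

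Second, by \ref{NormalSpec}, a spectral space is normal if and only if the closure of every point contains a unique closed point.

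So the task reduces to the order-theoretic statement: for every patch closed $Y$ and every $y\in Y$, the set ${\uparrow}y\cap Y$ has a unique maximal element if and only if every ${\uparrow}x$ is a chain.

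For ($\Rightarrow$), assume $X$ is a spectral root system and let $Y$ be patch closed, with $y\in Y$.

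1. The set ${\uparrow}y\cap Y$ is the closure of a point in the spectral space $Y$.
2. It therefore contains a closed point of $Y$, i.e.\ a maximal element. This holds in any spectral space: closures of points are compact, and Zorn's lemma applies to the chains of specializations, since a descending chain of nonempty closed subsets of a compact space has nonempty intersection.
3. By hypothesis ${\uparrow}y$ is a chain, hence so is ${\uparrow}y\cap Y$. Two maximal elements of a chain coincide, so the maximal element is unique.
4. By \ref{NormalSpec}, $Y$ is normal.

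For ($\Leftarrow$), suppose $X$ is not a root system. Then there are $x,y,z$ with $x\leq y$, $x\leq z$ and $y,z$ incomparable; in particular $x,y,z$ are pairwise distinct.

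1. Let $Y=\{x,y,z\}$. Points are patch closed because $X_\con$ is Hausdorff, so the finite set $Y$ is patch closed.
2. In $Y$ we have ${\uparrow}y\cap Y=\{y\}$: indeed $z\not\geq y$, and $x\geq y$ would force $x=y$. Likewise ${\uparrow}z\cap Y=\{z\}$.
3. Hence $y$ and $z$ are two distinct closed points of $Y$, both lying in the closure of $x$ in $Y$.
4. Thus $Y$ is not normal by \ref{NormalSpec}, which is the contrapositive of ($\Leftarrow$).

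The only step with real content is step 2 of ($\Rightarrow$), the existence of closed points inside closures of points of $Y$. I would take this from the compactness of patch closed sets, as sketched above, or cite it from \cite{DiScTr2019}. Everything else is bookkeeping with the dictionary between $X$ and $(X_\con,\leq)$.
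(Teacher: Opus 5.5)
Your proof is correct. The paper gives no argument of its own for this characterization; it simply quotes it from \cite{DiScTr2019}. Your route is the standard self-contained argument behind that citation: you reduce via \ref{NormalSpec} to ``each ${\uparrow}y\cap Y$ has a unique maximal element'', use that ${\uparrow}y$ is a chain for one direction, and use the three-point patch closed subspace $\{x,y,z\}$ for the other.
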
 \begin{FACT}{:Stranded spaces}{} \smallskip\noindent A T$_0$-space is \textbf{stranded} if its specialization poset is a sum of chains in the category of posets; in other words, if the specialization relation is a forest and a root system. In particular, stranded spectral spaces are spectral root systems and spectral forests. \end{FACT}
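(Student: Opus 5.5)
The claim to be justified is the final sentence: a stranded spectral space $X$ is both a spectral root system and a spectral forest. The plan is to argue directly from the definitions, using only the specialization poset, together with the fact that $X_\inv$ carries the reversed specialization order.

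First, the root system property. By definition, $X$ is a spectral root system when $\overline{\{x\}}$ is a specialization chain for every $x\in X$. With the convention of \ref{SpSp}, $\overline{\{x\}}={\uparrow}x$. Since $X$ is stranded, its specialization poset is a disjoint sum $\coprod_{i\in I} C_i$ of chains. In a sum of posets, comparable elements lie in the same summand. Hence if $x\in C_i$, then every $y\geq x$ also lies in $C_i$, so ${\uparrow}x\subseteq C_i$. Therefore ${\uparrow}x$ is totally ordered.

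Second, the forest property. We must show that $X_\inv$ is a spectral root system. It is a spectral space, being the inverse space of a spectral space. I would identify its specialization order as the reverse of that of $X$. Points of $X$ are patch closed, since $X_\con$ is Hausdorff. So by item (3) of the dictionary, the closure of $\{x\}$ in the inverse topology is ${\downarrow}x$. Thus the point closures of $X_\inv$ are the sets ${\downarrow}x$. By the same argument as before, ${\downarrow}x\subseteq C_i$ whenever $x\in C_i$, so ${\downarrow}x$ is a chain. Hence $X_\inv$ is a spectral root system, that is, $X$ is a spectral forest.

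There is no real obstacle. The only point needing a citation is that closures of points in $X_\inv$ are the down-sets ${\downarrow}x$, and this follows from item (3) of the dictionary once points are known to be patch closed.
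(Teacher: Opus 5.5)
Your argument is correct and matches what the paper does: the paper gives no separate proof and treats the final sentence as immediate from the definitions. Your version unpacks those definitions, noting that ${\uparrow}x$ and ${\downarrow}x$ each lie in the chain summand containing $x$, and that point closures in $X_{\inv}$ are the sets ${\downarrow}x$ (via item (3) of the dictionary, since singletons are patch closed). The only part you leave aside is the ``in other words'' equivalence of ``sum of chains'' with ``forest and root system''. It is equally routine: if every ${\uparrow}x$ and ${\downarrow}x$ is a chain, then comparability is transitive, and its equivalence classes are exactly the chain summands.
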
 \begin{FACT}{:Remark}{} \noindent Trees, forests, and root systems play an important role in modal logic. \begin{enumerate}[{\rm (i)}] \item First completeness results in modal logic with respect to trees go back to \cite{DumLem1959} and \cite{Kripke1963}. Extensions of the intuitionistic propositional calculus IPC complete with respect to forests were investigated by Drugush (see, e.g., \cite{Drugus1982,Drugus1984}). As is argued in \cite{Vardi1997}, it is the tree model property that is responsible for decidability of model-checking in modal logic. For further results in this direction, we refer to \cite{ChaZak1997} and \cite{BldRVe2001}. \item The Heyting algebras whose dual spectra are root systems are known as \textit{G\"odel algebras} (see, e.g., \cite[section 4.2]{Hajek1998}) The corresponding logic was introduced in \cite{Dummet1959} under the name LC, and is now known as the {\it G\"odel-Dummett logic}. This is a prominent extension of IPC which has been studied extensively (see, e.g., \cite{ChaZak1997,Hajek1998}). Its bi-intuitionistic version bi-LC is exactly the logic of stranded posets \cite{BeMaMo2024}. Free algebras in the corresponding varieties of G\"odel algebras have been thoroughly investigated. We reference \cite{BeMaMo2024} and \cite{Carai2026} for a detailed account and relevant references. \par \end{enumerate} \end{FACT}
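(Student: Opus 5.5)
The only mathematical content in this remark that can be proved, rather than cited, is item (ii). I read it as follows: a Heyting algebra $L$ is a Gödel algebra, i.e.\ it satisfies the prelinearity identity $(a\to b)\lor(b\to a)=\top$, if and only if its dual spectral space $X$ is a spectral root system. The historical attributions in (i) and the statement about bi-LC are citations, and I would not try to reprove them. The plan is to translate prelinearity into a condition on the specialization order, using the description of implication in Esakia spaces together with Priestley separation.

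\emph{Step 1 (dual description of $\to$).} Identify $L$ with $\qcop(X)$, whose elements are the clopen down-sets of the Priestley space $(X_\con,\leq)$ by item 5 of the dictionary. For $U,V\in\qcop(X)$ I claim
\[
U\to V \;=\; X\setminus {\uparrow}(U\setminus V).
\]
First, $U\setminus V$ is constructible. By Theorem \ref{HeytingEasyII}(ii) its closure ${\uparrow}(U\setminus V)$ is constructible and closed, so the complement is a clopen down-set, i.e.\ an element of $\qcop(X)$. Next, a down-set $W$ satisfies $U\cap W\subseteq V$ iff $W\cap(U\setminus V)=\0$, iff $W\cap{\uparrow}(U\setminus V)=\0$, the last step because $W$ is a down-set. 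Hence the displayed set is the largest such $W$.

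\emph{Step 2 (reformulating prelinearity).} By Step 1, $(U\to V)\cup(V\to U)=X$ fails iff some $x$ lies in ${\uparrow}(U\setminus V)\cap{\uparrow}(V\setminus U)$. Equivalently, there are $u\in U\setminus V$ and $v\in V\setminus U$ with $u,v\leq x$.

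\emph{Step 3 (root system $\Rightarrow$ Gödel).} Suppose $u,v\le x$ are as in Step 2. In a root system they are comparable, say $u\le v$. Then $u\in V$ because $V$ is a down-set, which contradicts $u\notin V$. The case $v\le u$ is symmetric.

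\emph{Step 4 (Gödel $\Rightarrow$ root system).} Suppose $u,v\le x$ with $u,v$ incomparable. Since $v\not\le u$, the Priestley separation axiom gives a clopen down-set $U$ with $u\in U$ and $v\notin U$. Since $u\not\le v$, it gives a clopen down-set $V$ with $v\in V$ and $u\notin V$. Both lie in $\qcop(X)$, and $x$ witnesses the failure of prelinearity in Step 2.

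I expect the main subtlety to be Step 1. One must make sure that the closure of a constructible set is again constructible and that it equals the generated up-set; this is exactly where the Heyting hypothesis enters through Theorem \ref{HeytingEasyII}. The remaining steps are routine order-theoretic arguments.
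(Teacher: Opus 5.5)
The paper gives no proof of this remark; it only points to the literature (for (ii), to H\'ajek). So the question is whether your argument proves the claim you extracted, and it does not quite: Step 3 uses ``root system'' in the wrong orientation. Steps 1, 2 and 4 are correct. In this paper a spectral root system is one in which the closure $\overline{\{x\}}={\uparrow}x$ of every point is a chain. That gives comparability of two points lying \emph{above} a common point. In Step 3 you need comparability of $u,v\le x$, which lie \emph{below} $x$. That is the statement that $\Gen(x)={\downarrow}x$ is a chain, which is the paper's definition of a spectral forest (equivalently, $X_\inv$ is a spectral root system). Likewise, Step 4 shows that prelinearity forces every ${\downarrow}x$ to be a chain, not every ${\uparrow}x$.

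With the paper's definitions, the equivalence you state is false in both directions. Take $L=\{0,a,b,c,1\}$ with $a\land b=0$ and $a\lor b=c<1$. Its prime ideals are ${\downarrow}a$, ${\downarrow}b$, ${\downarrow}c$, with ${\downarrow}a,{\downarrow}b\subseteq{\downarrow}c$, so every point closure is a chain. Yet $(a\to b)\lor(b\to a)=b\lor a=c\neq\top$. Now take $L'=\{0,e,f,g,1\}$ with $0<e<f,g<1$, $f\land g=e$ and $f\lor g=1$. This algebra is G\"odel. Its prime ideals are $\{0\}$, ${\downarrow}f$ and ${\downarrow}g$, and the closure of $\{0\}$ is the whole three-point spectrum, which is not a chain. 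What your argument actually proves is this: a Heyting algebra $L$ is G\"odel iff $\PrimI(L)$ is a spectral forest, iff its inverse space is a spectral root system. This agrees with the remark only if ``dual spectrum'' is read in Esakia's prime-filter convention, where elements of $L$ are clopen up-sets and the order is the reverse of the specialization order used here. You need to make that translation explicit, rather than invoke the root-system property as you do in Step 3.
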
 \begin{FACT}{Proposition}{RootForestHeyting} \begin{enumerate}[{\rm (i)}] \item If $X$ is a spectral root system, then the following conditions are equivalent. \begin{enumerate}[(a)] \item $X_\inv$ is an Esakia space. \item $U^{\max}$ is patch closed for all $U\in \qcop(X)$. \item For all $U,V\in \qcop(X)$ the set $U^{\max}\cap V$ is a compact subset of $X$. \end{enumerate} \item If $X$ is a spectral forest, then $X$ is Esakia if and only if $C^{\min}$ is compact for all $C\in \qccl(X)$. \par \end{enumerate} \end{FACT}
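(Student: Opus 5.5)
For both parts I would use the dictionary between $X$ and its Priestley space $(X_\con,\leq)$. The closure of a patch closed set $S$ is ${\uparrow}S$, and its inverse closure is ${\downarrow}S$; both are again patch closed. Compact open sets are down-sets that are patch clopen, and $S$ is compact iff ${\downarrow}S$ is patch closed. The common device is a "partition trick": if a patch clopen set is written as the disjoint union of two patch closed sets, then both pieces are patch clopen, i.e. constructible.

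\emph{Part (i).} For (a)$\Rightarrow$(b) I would apply \ref{HeytingEasyII}\ref{HeytingEasyIISemiOnClosed} to the Esakia space $X_\inv$.
\begin{enumerate}[{\rm (1)}]
\item Each $U\in\qcop(X)$ is closed and constructible in $X_\inv$, so it is a PC-space in the inverse topology.
\item By \ref{pscompBasic}\ref{pscompBasicCharPCcomp}, its set of minimal points with respect to the inverse order is compact and patch closed.
\item That set is exactly $U^{\max}$.
\end{enumerate}
For (b)$\Rightarrow$(c), note that $V$ is patch clopen, so $U^{\max}\cap V$ is patch closed. Hence ${\downarrow}(U^{\max}\cap V)$ is patch closed, i.e. $U^{\max}\cap V$ is compact. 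Neither of these two steps uses the root system hypothesis.

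For (c)$\Rightarrow$(a) I need ${\downarrow}K$ to be constructible for every constructible $K$; by \ref{HeytingEasyII} applied to $X_\inv$, this is what Esakia means. Since ${\downarrow}$ commutes with finite unions, it suffices to treat $K=U\setminus V$ with $U,V\in\qcop(X)$. Here the root system hypothesis enters.
\begin{enumerate}[{\rm (1)}]
\item For $x\in U$, the set ${\uparrow}x\cap U$ is a patch compact chain, so it has a largest element $m(x)\in U^{\max}$. It is the unique element of $U^{\max}$ above $x$.
\item Since $V$ is a down-set, for $x\in U$ we get $x\in{\downarrow}(U\setminus V)$ iff $m(x)\notin V$.
\item Hence $U$ is the disjoint union of ${\downarrow}(U\setminus V)$ and ${\downarrow}(U^{\max}\cap V)$.
\item The first piece is patch closed as the ${\downarrow}$ of a patch closed set. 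The second is patch closed by (c) together with the compactness criterion.
\item By the partition trick, ${\downarrow}(U\setminus V)$ is constructible.
\end{enumerate}
The main point to check carefully is uniqueness of $m(x)$, which is exactly where the chain condition is used.

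\emph{Part (ii).} This is the order dual of the argument above. If $X$ is Esakia, every $C\in\qccl(X)$ is closed and constructible, hence a PC-space by \ref{HeytingEasyII}\ref{HeytingEasyIISemiOnClosed}. Then $C^{\min}$ is compact by \ref{pscompBasic}.

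Conversely, suppose each $C^{\min}$ is compact. By \DST{minimalPointsCompact} applied to the spectral subspace $C$, the set $C^{\min}$ is also patch closed. I need ${\uparrow}(U\cap C)$ to be constructible for $U\in\qcop(X)$ and $C\in\qccl(X)$.
\begin{enumerate}[{\rm (1)}]
\item In a forest, for $x\in C$ the set ${\downarrow}x\cap C$ is a patch compact chain with a least element $n(x)\in C^{\min}$.
\item Since $U$ is a down-set, for $x\in C$ we get $x\in{\uparrow}(U\cap C)$ iff $n(x)\in U$.
\item Hence $C$ is the disjoint union of ${\uparrow}(C^{\min}\cap U)$ and ${\uparrow}(C\setminus U)$.
\item Both pieces are ${\uparrow}$'s of patch closed sets, hence patch closed.
\item So ${\uparrow}(U\cap C)$ is constructible, and \ref{HeytingEasyII}\ref{HeytingEasyIISpaceHeyting} gives the conclusion.
\end{enumerate}
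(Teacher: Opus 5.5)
Part (i) is correct, but you argue differently from the paper. The paper only chains cited equivalences. First, $X_\inv$ is Esakia iff each $U\in\qcop(X)$ is a PC-space in the inverse topology. Since the root-system hypothesis makes each $U$ normal, Corollary~\ref{QCCLStoneLattice} turns this into ``$U^{\max}$ is patch closed''. Finally, a cited result on normal spectral spaces with patch closed maximal spectrum gives (b)$\Leftrightarrow$(c). You instead prove (c)$\Rightarrow$(a) by hand, using the partition $U={\downarrow}(U\setminus V)\cup{\downarrow}(U^{\max}\cap V)$. Your map $m$ is exactly the normal retraction $U\to U^{\max}$, so you are in effect reproving the piece of those citations that is needed. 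Your version is longer but self-contained, and it shows exactly where the chain condition is used.

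Part (ii) contains a false step. In step (3), the sets ${\uparrow}(C^{\min}\cap U)$ and ${\uparrow}(C\setminus U)$ are in general not disjoint.
\begin{itemize}
\item $C\setminus U$ is closed, hence an up-set, so ${\uparrow}(C\setminus U)=C\setminus U$.
\item By your step (2), ${\uparrow}(C^{\min}\cap U)={\uparrow}(C\cap U)$.
\item So disjointness would force ${\uparrow}(C\cap U)=C\cap U$, which is only the trivial case.
\end{itemize}
For a concrete counterexample, take the Sierpi\'nski space $X=\{p,q\}$ with $p\le q$; it is a chain, hence a forest. With $C=X$ and $U=\{p\}$, your two pieces are $X$ and $\{q\}$, which overlap.

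The slip is in the dualization. Passing to $X_\inv$, the set $V\in\qcop(X)$ of part (i) corresponds to $X\setminus U\in\qccl(X)$, not to $U$. Your step (2), together with uniqueness of $n(x)$ (proved as for $m(x)$), actually yields the correct partition
\[
C={\uparrow}(C\cap U)\ \cup\ {\uparrow}(C^{\min}\setminus U)\qquad\text{(disjoint union)}.
\]
Both pieces are patch closed:
\begin{itemize}
\item the first as the up-set generated by a constructible set;
\item the second because $C^{\min}\setminus U$ is patch closed. This is where compactness, hence patch closedness, of $C^{\min}$ is used.
\end{itemize}
With this replacement, the partition trick shows that ${\uparrow}(C\cap U)$ is constructible, and the rest of your argument goes through.
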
 \begin{proof} (i) Since $X$ is a spectral root system, each $U\in \qcop(X)$ is a normal spectral space. Hence \begin{align*} X_{\inv}\text{ is Esakia}&\iff \text{each }A\in \qccl(X_\inv)\text{ is a PC-space, by \DST{HeytingEasy}},\cr &\iff \text{for all }U\in \qcop(X)\text{ the space }U_\inv\text{ is a PC-space},\cr &\iff \text{for all }U\in \qcop(X)\text{ the set }U^{\max}\text{ is patch closed},\cr &\hspace{20ex} \text{by \ref{QCCLStoneLattice}\ref{QCCLStoneLatticeNormalCoHeyting}$\Leftrightarrow$\ref{QCCLStoneLatticeNormProCon} using that }U\text{ is normal},\cr &\iff \text{for all }U,V\in \qcop(X)\text{ the set }U^{\max}\cap V\text{ is compact,}\cr &\hspace{20ex} \text{by \DST{XmaxProConNormal}}. \end{align*} \noindent (ii) is dual to (i), taking into account that the set of minimal points of a spectral space is patch closed if and only if it is compact. \end{proof} \begin{FACT}{Observation}{HeytingWhenXmaxInCLXmin} Let $X$ be a spectral space. \begin{enumerate}[{\rm (i)}] \item Suppose that every maximal point of $X$ is in the patch closure of the set of minimal points of $X$. Then the following conditions on $X$ are equivalent: \begin{center} (a) $X$ is Boolean, (b) $\qcop(X)$ is a Stone algebra, (c) $X$ is Esakia, \par (d) $X$ is a PC-space, (e) $X^{\min}$ is patch closed. \end{center} \item Suppose that every minimal point of $X$ is in the patch closure of the set of maximal points of $X$.\footnote{Using \DST{minimalPointsCorollary}, this condition is equivalent to saying that $X^{\max}$ is dense in $X$.} Then the following conditions on $X$ are equivalent: \begin{center} (a) $X$ is Boolean, (b) $\qccl(X)$ is a Stone algebra, (c) $X_\inv$ is Esakia, \par (d) $X_\inv$ is a PC-space, (e) $X^{\max}$ is patch closed. \end{center} \end{enumerate} \end{FACT}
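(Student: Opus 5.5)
Part (ii) is obtained from part (i) by passing to $X_\inv$. The maximal points of $X_\inv$ are the minimal points of $X$ and vice versa, and the patch topologies of $X$ and $X_\inv$ agree. Also $\qcop(X_\inv)=\qccl(X)$, so "$\qcop(X_\inv)$ is a Stone algebra" means "$\qccl(X)$ is a Stone algebra". Being Boolean is symmetric, since $X$ is Boolean iff the specialization order is trivial. So it suffices to prove (i).

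For (i), the chain (a)$\Rightarrow$(b)$\Rightarrow$(d) and (a)$\Rightarrow$(c)$\Rightarrow$(d) is given by Summary~\ref{pseudoVariantsDiagram}, since Boolean algebras are Stone and Heyting, and both of those are pseudocomplemented. Then (d)$\Rightarrow$(e) is Theorem~\ref{pscompBasic}\ref{pscompBasicCharPCcomp}: in a PC-space $X^{\min}$ is quasi-compact, hence patch closed. It remains to show (e)$\Rightarrow$(a).

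Assume $X^{\min}$ is patch closed. By hypothesis every maximal point lies in the patch closure of $X^{\min}$, which is $X^{\min}$ itself, so $X^{\max}\subseteq X^{\min}$. Now take $x\le y$ in $X$. The closed set $\overline{\{y\}}$ contains a closed point $z$, which exists in any spectral space since nonempty closed sets are quasi-compact. Then $z$ is maximal, hence minimal. But $x\le y\le z$ and $z$ is minimal, so $x=y=z$. Thus the specialization order is trivial. A spectral space with trivial specialization order is Boolean, because every point is closed, so $X=X_\con$ is Hausdorff and therefore equal to its patch space. This proves (a).

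The only step needing care is this final one, where a $T_1$ spectral space has to be identified with a Boolean one. I would argue it via the dictionary: closed sets are the up-sets of patch closed sets, and with trivial order the patch closed sets are already up-sets, so the topology coincides with the patch topology. Alternatively one can cite the standard fact from \cite{DiScTr2019}. Everything else is a direct appeal to earlier results.
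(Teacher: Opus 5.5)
Your proof is correct and is essentially the paper's argument: you obtain (ii) by passing to $X_\inv$, get (a)$\Rightarrow$(b),(c)$\Rightarrow$(d) from the standard implications and (d)$\Rightarrow$(e) from Theorem~\ref{pscompBasic}, and for (e)$\Rightarrow$(a) you use $X^{\max}\subseteq X^{\min}$ to force a trivial specialization order. You simply make explicit two details the paper leaves implicit: the existence of a closed point in $\overline{\{y\}}$, and why a spectral space with trivial specialization coincides with its patch space and is therefore Boolean.
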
 \begin{proof} Item (ii) is (i) spelled out for $X_\inv$. For item (i), first note that the implications (a)$\Ra $(b),(c) and (b)$\Ra $(d), (c)$\Ra$(d) are obvious, and the implication (d)$\Ra $(e) holds by \ref{pscompBasic}\ref{pscompBasicCharPC}. The implication (e)$\Ra $(a) follows from the assumption that all maximal points of $X$ are in the patch closure of $X^{\min}$, hence there are no specializations in $X$ under assumption (e), and so $X$ is Boolean. \end{proof} \par \noindent The following consequence of our results so far -- displayed for spectral spaces -- will give us in \ref{normalXcoHeytingCT} full information about pseudocomplementation properties for inverse spectra of rings of continuous functions: \belowpdfbookmark{CoHeyting Root Systems are Boolean}{BookmarknormalXcoHeyting} \begin{FACT}{Theorem}{normalXcoHeyting} Let $X$ be a spectral root system such that every minimal point of $X$ is in the patch closure of the set of maximal points of $X$. The following conditions are equivalent. \begin{enumerate}[{\rm (i)}] \item\labelx{normalXcoHeytingXPspace} $X$ is Boolean. \item\labelx{normalXcoHeytingZStone} $\qccl(X)$ is a Stone algebra. \item\labelx{normalXcoHeytingZHeyting} The inverse space of $X$ is Esakia, hence $\qccl(X)$ is a Heyting algebra. \item\labelx{normalXcoHeytingZsHeyting} The inverse space of $X$ is a PC-space, hence $\qccl(X)$ is pseudocomplemented. \item\labelx{normalXcoHeytingMaxProcon} $X^{\max}$ is patch closed. \item\labelx{normalXcoHeytingUmaxCompact} For all $U\in \qcop(X)$ the set $U^{\max}$ is patch closed. \item\labelx{normalXcoHeytingUmaxVCompact} For all $U,V\in \qcop(X)$ the set $U^{\max}\cap V$ is compact. \item\labelx{normalXcoHeytingGenCopen} For every $C\in \qccl(X)$ the set ${\downarrow}C$ is open (equivalently: is patch open) in $X$. \item\labelx{normalXcoHeytingGenConOpen} For every $C\in \qccl(X)$ the set ${\downarrow}C$ is clopen in $X$. \end{enumerate} \par \end{FACT}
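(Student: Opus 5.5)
Most of the equivalences are already packaged in \ref{HeytingWhenXmaxInCLXmin}(ii) and \ref{RootForestHeyting}(i), so I will chain these together and add the two conditions about ${\downarrow}C$ separately. The hypothesis that every minimal point lies in the patch closure of $X^{\max}$ is exactly the hypothesis of \ref{HeytingWhenXmaxInCLXmin}(ii). That result immediately gives the equivalence of (i), (ii), (iii), (iv) and (v). Here (iii) is condition (c) there and (iv) is condition (d); the ``hence'' clauses in (iii) and (iv) are the Stone duality of $X_\inv$ with $\qccl(X)$, namely $\qcop(X_\inv)=\qccl(X)$.

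Next I bring in (vi) and (vii). Since $X$ is a spectral root system, \ref{RootForestHeyting}(i) shows that (iii) ($X_\inv$ Esakia), (vi) ($U^{\max}$ patch closed for all $U\in\qcop(X)$) and (vii) ($U^{\max}\cap V$ compact) are equivalent. This places (vi) and (vii) in the cycle.

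It remains to handle (viii) and (ix).
\begin{itemize}
\item (i)$\Rightarrow$(ix): if $X$ is Boolean, there are no proper specializations. Hence ${\downarrow}C=C$, which is clopen since compact open sets are closed.
\item (ix)$\Rightarrow$(viii) is trivial.
\item (viii)$\Rightarrow$(i) or (v): this is the only step needing real thought. Since $C$ is closed, ${\downarrow}C$ is the closure of $C$ in the inverse topology, and it is patch closed.
\begin{itemize}
\item If ${\downarrow}C$ is open in $X$, it is an open, patch closed set, hence compact open. So ${\downarrow}C\in\qcop(X)$, and therefore it is also inversely closed.
\item Thus the inverse closure of every $C\in\qcop(X_\inv)$ is inversely clopen, i.e.\ constructible in $X_\inv$. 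By \ref{pscompBasic}(ii) applied to $X_\inv$, this says $X_\inv$ is a PC-space, which is (iv).
\item Alternatively, inverse closures being inversely open gives condition (ii) of \ref{QCCLStoneLattice}, yielding (ii) directly.
\end{itemize}
\end{itemize}

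The parenthetical ``equivalently patch open'' in (viii) also needs a check. If ${\downarrow}C$ is patch open, then, being a down-set of the Priestley space, it is a patch-clopen down-set, i.e.\ compact open. So the two readings of (viii) coincide.

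The main obstacle I foresee is just bookkeeping: making sure the ${\downarrow}C$ conditions translate correctly into conditions on $X_\inv$ under the conventions of the paper's dictionary. The substantive content is all in the cited results.
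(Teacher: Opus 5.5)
Your argument is correct, and for (i)--(vii) it is exactly the paper's: \ref{HeytingWhenXmaxInCLXmin}(ii) gives (i)--(v), and \ref{RootForestHeyting}(i) gives (iii), (vi), (vii). Your step (viii)$\Rightarrow$(iv) via \ref{pscompBasic} for $X_\inv$ is also what the paper uses.

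The difference is how (ix) enters. The paper proves (viii)$\Rightarrow$(ix) directly: a spectral root system is normal, so by \ref{NormalSpec} the set ${\downarrow}C={\downarrow}{\uparrow}C=r^{-1}(r(C))$ is closed for closed $C$, hence clopen once it is open. You instead close the loop as (viii)$\Rightarrow$(iv)$\Rightarrow$(i)$\Rightarrow$(ix), using the trivial step (i)$\Rightarrow$(ix). That is shorter and avoids the retraction $r$. However, it relies on the density hypothesis through \ref{HeytingWhenXmaxInCLXmin}, whereas the paper's argument shows (viii)$\Leftrightarrow$(ix) in every normal spectral space.

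Two statements in your (viii)-step should be corrected, although the proof survives them.

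First, from ${\downarrow}C\in\qcop(X)$ you may conclude that ${\downarrow}C$ is constructible, which is all that \ref{pscompBasic}(ii) for $X_\inv$ needs. But you may not call it \emph{inversely clopen}. Open sets of $X_\inv$ are up-sets, and at that point nothing shows ${\downarrow}C$ is closed under specialization.

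Second, for the same reason your ``Alternatively'' bullet is unjustified. Condition (ii) of \ref{QCCLStoneLattice} asks that ${\downarrow}C$ be clopen (in $X_\inv$, equivalently in $X$), which is literally (ix). Passing from ``open in $X$'' to that is precisely the normality argument above. Drop that bullet, or supply the argument with $r$.
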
 \begin{proof} Conditions \ref{normalXcoHeytingXPspace}--\ref{normalXcoHeytingMaxProcon} are equivalent by \ref{HeytingWhenXmaxInCLXmin}(ii). Conditions \ref{normalXcoHeytingZHeyting}, \ref{normalXcoHeytingUmaxCompact} and \ref{normalXcoHeytingUmaxVCompact} are equivalent by \ref{RootForestHeyting}. Conditions \ref{normalXcoHeytingZStone}, \ref{normalXcoHeytingZsHeyting} and \ref{normalXcoHeytingGenCopen} are equivalent by \ref{pscompBasic} applied to $X_\inv$. The implication \ref{normalXcoHeytingGenConOpen}$\Ra $\ref{normalXcoHeytingGenCopen} is a weakening. \par \smallskip\noindent \ref{normalXcoHeytingGenCopen}$\Ra$\ref{normalXcoHeytingGenConOpen}. Since $X$ is a spectral root system, it is normal, hence we may apply \ref{NormalSpec}, which tells us that ${\downarrow}C=r^{-1}(C)$ is closed. It is open by \ref{normalXcoHeytingGenCopen}, thus it is clopen. \end{proof} \section{Pseudocomplementation in rings of continuous functions}\label{sectionRings} \noindent In this final section we prove our main results on rings of continuous functions. We start with a reminder on the structure of $\Spec C(T)$. \par \color{black} \begin{FACT}{:Fact}{ContF2} Let $T$ be a completely regular space. Recall from \ref{ContF1} that we write $C(T)$ for the ring of continuous functions $T\lra \R$. \begin{enumerate}[{\rm (i)}] \item\labelx{ContFCinBeta} (See, e.g., \cite[section~5]{Schwar1997}) The embedding $T\into \beta T$ induces an embedding $\iota: C(\beta T)\into C(T)$ of rings (via restriction), its image is the ring $C^*(T)$ of bounded continuous functions $T\lra \R$ and $\Spec(\iota):\Spec C(T)\lra \Spec C(\beta T)$ is a homeomorphism onto an inversely closed (i.e., patch closed and closed under generalizations) subset $S$ of $\Spec C(\beta T)$. This embedding extends the embedding $T\into \beta T$ (when we identify $T$ with its image in $\Spec(C(T))$ as in \ref{ContF1}\ref{ContFeps}) (and similarly for $\beta T$). We get a commutative diagram of embeddings of spaces: \begin{center} \begin{tikzcd}[row sep=8ex,column sep=10ex] \Spec C(T) \ar[r,hook,"\Spec(\iota)"] & \Spec C(\beta T)\\ T \ar[u, hook] \ar[r,hook] & \beta T \ar[u,hook] \end{tikzcd} \end{center} Furthermore, the map that sends a maximal ideal $\Dm$ of $C(T)$ to the Jacobson radical of $\iota^{-1}(\Dm)$ is a homeomorphism between the space of maximal ideals of $C(T)$ and $C(\beta T)$ (Gelfand-Kolmogoroff theorem, available in a much broader context, see \cite[Thm.~10.1]{Tressl2007}); the embedding $\beta T\into \Spec C(\beta T)$ is a homeomorphism onto the space of maximal ideals. \item \labelx{ContFstranded} Finally, the space $(\Spec C(\beta T)) \setminus S$ is stranded (but is in general not spectral) and is equal to the set of proper generalisations of the image of $\Spec(\iota)$. More precisely, the maximal chains in $(\Spec C(\beta T)) \setminus S$ are all of the form \[ \{\Dp\in \Spec C(\beta T)\st \iota^{-1}(\Dm)\subsetneq \Dp\}, \] where $\Dm$ runs through the maximal ideals of $C(T)$. This follows from \cite[Thm. 10.5]{Tressl2007} and also from results in \cite{Schwar1997}. \par Here is a picture of $\Spec C(\beta T)$ showing how $\Spec C(T)$ (identified with the image of $\Spec(\iota)$, shown in the blue area) sits inside $\Spec C(\beta T)$. Closed points (=maximal ideals=maximal points in the spectral space terminology) sit on top, ``specialization goes upwards''; $\Spec C(T)$ and $\Spec C(\beta T)$ have homeomorphic minimal spectra. \vspace{-3ex} \begin{center} \begin{tikzpicture}[scale=0.9] \draw[-,red,name path=closedPointsRed] (-6,3) to[out=0, in=180] (0,0) -- (0.5,0) to[out=0, in=160] (4.35,2) ; \draw[-,blue,name path=closedPointsBlue] (-6,2) to[out=0, in=180] (0,0) -- (2,0) to[out=0, in=180] (6,3) ; \fill [opacity=0.3,blue] (-6,2) to[out=0, in=180] (0,0) -- (2,0) to[out=0, in=180] (6,3) -- (6,0) -- (-6,0) -- (-6,2) -- cycle; \foreach \r in {-6,-5.8,...,-1,1.4,1.6,...,4.2}{ \draw [name path=Vline,draw=none] (\r,0) -- (\r,4); \path [name intersections={of=closedPointsBlue and Vline,by=maxPointBlue}]; \path [name intersections={of=closedPointsRed and Vline,by=maxPointRed}]; \draw [-,red,shorten <=0.05cm] (maxPointBlue) -- (maxPointRed); } \node[red] at (0,2) {$(\Spec C(\beta T))\setminus \Spec C(T)$}; \end{tikzpicture} \end{center} \par \end{enumerate} \par \end{FACT}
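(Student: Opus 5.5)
The plan is to handle the ring-theoretic part of (i) through a localization description of $C(T)$ over $C^*(T)\cong C(\beta T)$, and then derive (ii) from the root-system structure in \ref{ContF1}\ref{ContFSpecIsRootSystem}.

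\textbf{Step 1: the embedding $\iota$ and its image.} Restriction along the dense embedding $T\into\beta T$ is injective. By the universal property of $\beta T$, every bounded continuous function on $T$ extends to $\beta T$, so the image of $\iota$ is exactly $C^*(T)$.

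\textbf{Step 2: $C(T)$ as a localization of $C^*(T)$.} Let $S_0=\{h\in C^*(T)\st Z(h)=\0\}$. For $f\in C(T)$ we can write
\[ f=\frac{f/(1+f^2)}{1/(1+f^2)}, \]
where both numerator and denominator are bounded, and the denominator lies in $S_0$. Moreover every element of $S_0$ is a unit of $C(T)$. Hence $C(T)=S_0^{-1}C^*(T)$.

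\textbf{Step 3: properties of $\Spec(\iota)$.} Standard localization theory then gives that $\Spec(\iota)$ is a homeomorphism onto
\[ S=\{\Dp\st \Dp\cap S_0=\0\}=\bigcap_{h\in S_0}D(h). \]
This set is closed under generalization, and as an intersection of constructible sets it is patch closed. So it is inversely closed.

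\textbf{Step 4: compatibility with $T$ and the maximal ideals.} For $x\in T$, $\iota^{-1}(\Dm_x)$ is the ideal of functions on $\beta T$ vanishing at $x$. Hence the square in the statement commutes.

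For a maximal ideal $\Dm$ of $C(T)$, the prime $\iota^{-1}(\Dm)$ lies below a unique maximal ideal of $C(\beta T)$, because spectra of $C(\cdot)$ are normal (\ref{NormalSpec}). I would identify that maximal ideal with the Jacobson radical of $\iota^{-1}(\Dm)$, and then quote the Gel'fand--Kolmogorov theorem (or \cite[Thm.~10.1]{Tressl2007}) for bijectivity and bicontinuity.

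\textbf{Step 5: the elements of $\Spec C(\beta T)\setminus S$ are proper specializations of points $\iota^{-1}(\Dm)$.} Take $\Dp\notin S$, so $\Dp$ meets $S_0$. Choose a minimal prime $\Dq\subseteq\Dp$; it lies in $S$, because minimal primes of $C^*(T)$ and of $C(T)$ correspond. In the root system $\Spec C(\beta T)$, the specializations of $\Dq$ form a chain containing $\Dp$. The intersection of this chain with the patch closed set $S$ has a largest element, and that element is maximal in $S$, hence of the form $\iota^{-1}(\Dm)$. Since $S$ is closed under generalization and $\Dp\notin S$, comparability in the chain forces $\iota^{-1}(\Dm)\subsetneq\Dp$. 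Conversely, every proper specialization of $\iota^{-1}(\Dm)$ meets $S_0$ by maximality of $\iota^{-1}(\Dm)$ in $S$.

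\textbf{Step 6: strandedness (the expected main obstacle).} It remains to show that the set of primes strictly above a fixed $\iota^{-1}(\Dm)$ is a chain, that these sets are pairwise disjoint for distinct $\Dm$, and that no $\Dp\notin S$ has two incomparable generalizations outside $S$. The root-system property controls only upward chains. Downward, I would use that primes of $C(\beta T)$ are absolutely convex.

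Fix $h\in\Dp\cap S_0$ with $h>0$ on $T$. Any generalization $\Dp'\notin S$ of $\Dp$ contains some $h'\in S_0$. Using convexity, I would compare $h$ and $h'$ with powers of each other to show that all such $\Dp'$ contain the same "infinitesimal" elements near the point of $\beta T\setminus T$ under $\Dp$. This would force these generalizations to be comparable and pin down a unique $\Dm$. If this direct argument gets messy, I would fall back on citing \cite[Thm.~10.5]{Tressl2007} and \cite{Schwar1997} for precisely this step.
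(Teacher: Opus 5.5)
The paper gives no argument of its own for this Fact: it cites Schwartz for (i), \cite[Thm.~10.1]{Tressl2007} for the Gel'fand--Kolmogorov part, and \cite[Thm.~10.5]{Tressl2007} for (ii). Your Steps 1--5 are a genuinely different, more self-contained route, and they are sound. The presentation $C(T)=S_0^{-1}C^*(T)$ gives at once the homeomorphism onto $S=\bigcap_{h\in S_0}D(h)$ and the inverse closedness of $S$. The argument with the largest element of the patch closed chain ${\uparrow}\Dq\cap S$ correctly shows that the complement of $S$ is the union of the sets of primes strictly above the $\iota^{-1}(\Dm)$. These are proper \emph{specializations} of $S$, as you say; the word ``generalisations'' in the statement is a slip.

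One justification in Step 5 should be made explicit. Saying that ``minimal primes of $C^*(T)$ and $C(T)$ correspond'' is essentially what is being claimed. The actual reason is that elements of $S_0$ are non-zero-divisors of $C^*(T)$, while in a reduced ring every element of a minimal prime is a zero-divisor. Hence minimal primes of $C(\beta T)$ avoid $S_0$.

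Step 6, however, is not an argument as written, and the convexity/``infinitesimals'' idea is unnecessary. Of your three items, the first needs nothing: the primes strictly above a fixed $\iota^{-1}(\Dm)$ form a chain by the root-system property alone, since the closure of a point is a chain.

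Disjointness is elementary. Suppose $\Dm_1\ne\Dm_2$ and pick $g_i\in\Dm_i$ with $g_1+g_2=1$. Put $h=g_1^2/(g_1^2+g_2^2)$, which is well defined because $g_1,g_2$ have no common zero. Then:
\begin{itemize}
\item $0\le h\le 1$;
\item $h=g_1\cdot g_1/(g_1^2+g_2^2)\in\Dm_1$;
\item $1-h=g_2\cdot g_2/(g_1^2+g_2^2)\in\Dm_2$.
\end{itemize}
Both functions are bounded, so $\iota^{-1}(\Dm_1)+\iota^{-1}(\Dm_2)=C(\beta T)$, and no prime contains both. Equivalently, use the injectivity of the Gel'fand--Kolmogorov map you already quoted in Step 4, together with normality of $\Spec C(\beta T)$.

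Consequently each $\Dp\notin S$ lies above a unique $\iota^{-1}(\Dm)$. If $\Dp_1\subseteq\Dp_2$ are both outside $S$, then $\Dp_2$ lies above the same $\iota^{-1}(\Dm)$ as $\Dp_1$. So comparable points always sit in the same set, each set is a chain, and the complement of $S$ is the disjoint sum of these chains. This is exactly strandedness and the description of the maximal chains. Sets with $\iota^{-1}(\Dm)$ already maximal in $C(\beta T)$ are empty and should be discarded. With this, your proof needs no fallback on the citation.
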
 \medskip\noindent For a completely regular space $T$, let $Y$ be one of the spaces \[ \Spec (C(T)), \quad \Spec (C(T))_\inv, \quad \zSpec (C(T)), \quad \mbox{or} \quad \zSpec (C(T))_\inv. \] We will characterize the four conditions on $\qcop(Y)$ in the diagram of \ref{pseudoVariantsDiagram} as well as the condition ``$Y^{\min}$ is compact'' by means of topological properties of $T$. \par We start with $\Spec (C(T))_\inv$ and $\zSpec (C(T))_\inv$. These are characterized by $T$ being a P-space and is deduced from \ref{normalXcoHeyting}: \begin{FACT}{Theorem}{normalXcoHeytingCT} Let $T$ be a completely regular space, $X=\Spec C(T)$ and $Z=\zSpec C(T)$. The following conditions are equivalent. \begin{enumerate}[{\rm (i)}] \item\labelx{normalXcoHeytingCTXPspace} $T$ is a P-space, i.e. $X$ is Boolean and $\qcop(X)$ is a Boolean algebra. \item\labelx{normalXcoHeytingCTZPspace} $Z$ is Boolean, hence $\qccl(Z)$\footnote{Recall from \ref{ContF1}\ref{ContFz} that $\qccl(\zSpec C(T))$ is the order dual of $\Coz(T)$.} is a Boolean algebra. \item\labelx{normalXcoHeytingCTZStone} $\qccl(Z)$ is a Stone algebra. \item\labelx{normalXcoHeytingCTZHeyting} The inverse space of $Z$ is Esakia, hence $\qccl(Z)$ is a Heyting algebra. \item\labelx{normalXcoHeytingCTZsHeyting} The inverse space of $Z$ is a PC-space, hence $\qccl(Z)$ is pseudocomplemented. \item\labelx{normalXcoHeytingCTMaxProcon} $Z^{\max}$ is patch closed. \item\labelx{normalXcoHeytingCTUmaxCompact} For all $U\in \qcop(Z)$ the set $U^{\max}$ is patch closed. \item\labelx{normalXcoHeytingCTUmaxVCompact} For all $U,V\in \qcop(Z)$ the set $U^{\max}\cap V$ is compact. \item\labelx{normalXcoHeytingCTGenCopen} For every $C\in \qccl(Z)$ the set ${\downarrow}C$ is open (equivalently: is patch open) in $Z$. \item\labelx{normalXcoHeytingCTGenConOpen} For every $C\in \qccl(Z)$ the set ${\downarrow}C$ is clopen in $Z$. \end{enumerate} \par \noindent Furthermore these conditions are equivalent to every condition \ref{normalXcoHeytingCTZPspace}--\ref{normalXcoHeytingCTGenConOpen} when they are formulated for $X$ instead of $Z$. \end{FACT}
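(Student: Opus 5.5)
The plan is to apply Theorem \ref{normalXcoHeyting} twice, once to $Z$ and once to $X$. This requires checking its two hypotheses for each space. The remaining work is to tie "Boolean" to the definition of a P-space.

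\emph{Step 1: both spaces are spectral root systems.} $X=\Spec C(T)$ is a spectral root system by \ref{ContF1}\ref{ContFSpecIsRootSystem}. $Z$ is a spectral subspace of $X$ by \ref{ContF1}\ref{ContFz}, hence patch closed, so the closure of a point of $Z$ in $Z$ is a subset of its closure in $X$. That closure is a chain, so $Z$ is also a spectral root system.

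\emph{Step 2: every minimal point lies in the patch closure of the maximal points.} By \ref{ContF1}\ref{ContFz}, $Z$ is the patch closure of $\eps(T)$. Since $\eps(T)\subseteq \beta T=X^{\max}$ and $X^{\max}\cap Z=Z^{\max}$ (maximal ideals are z-ideals), all of $Z$, in particular $Z^{\min}$, lies in the patch closure of $Z^{\max}$.

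For $X$ I would use that every minimal prime of $C(T)$ is a z-ideal (\ref{ContF1}\ref{ContFz}). Thus $X^{\min}=Z^{\min}$, which lies in the patch closure of $Z^{\max}\subseteq X^{\max}$. Patch closures in the patch closed subspace $Z$ agree with those in $X$, so this suffices.

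Now Theorem \ref{normalXcoHeyting} yields the equivalence of \ref{normalXcoHeytingCTZPspace}--\ref{normalXcoHeytingCTGenConOpen} for $Z$, including "$Z$ is Boolean". The same theorem gives the equivalence of the corresponding list for $X$, including "$X$ is Boolean".

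\emph{Step 3: link the two lists and (i).} By \ref{ContF1}\ref{ContFFspace}, $T$ is a P-space iff $X$ is Boolean, and a Boolean spectral space is one whose compact opens form a Boolean algebra. It remains to show that $X$ is Boolean iff $Z$ is Boolean; I expect this to be the main point.

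If $X$ is Boolean there are no proper specializations in $X$, so none in $Z$, and the spectral space $Z$ is Boolean.

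Conversely, suppose $Z$ is Boolean. Then $Z^{\min}=Z=Z^{\max}$, so every minimal prime of $C(T)$ is maximal, since $X^{\min}=Z^{\min}$. Every prime contains a minimal prime, which is then maximal, so every prime equals it. Hence $X=X^{\max}$, and $X$ has no proper specializations and is Boolean.

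Alternatively, $Z$ Boolean means $\Coz(T)$ is a Boolean algebra, i.e.\ cozero sets are clopen. This is the classical Gillman--Jerison characterization of P-spaces, and it would serve as a fallback check.

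Combining Steps 1--3 gives all the stated equivalences.
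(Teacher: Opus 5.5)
Your proposal is correct and takes essentially the same route as the paper. You apply Theorem~\ref{normalXcoHeyting} to both $Z$ and $X$, using $X^{\min}=Z^{\min}$, $X^{\max}=Z^{\max}$ and that $Z$ is the patch closure of $\eps(T)\subseteq X^{\max}$, and then deduce ``$X$ Boolean iff $Z$ Boolean'' from the fact that all minimal and maximal points of $X$ lie in $Z$. Your explicit check that $Z$ inherits the root-system property from $X$ supplies a hypothesis that the paper uses without stating it.
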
 \begin{proof} We have $X^{\min}\subseteq \overline{X^{\max}}^\con=Z$ by \ref{ContF1}\ref{ContFz}, in particular $Z^{\max}=X^{\max}$ and its patch closure contains $Z^{\min}=X^{\min}$. Therefore, by \ref{normalXcoHeyting}, condition \ref{normalXcoHeytingCTZPspace} is equivalent to each of the conditions \ref{normalXcoHeytingCTZStone}--\ref{normalXcoHeytingCTGenConOpen}. Similarly, condition \ref{normalXcoHeytingCTXPspace} is equivalent to each of the conditions \ref{normalXcoHeytingCTZStone}--\ref{normalXcoHeytingCTGenConOpen} formulated for $X$ instead of $Z$. \par Hence it remains to show that \ref{normalXcoHeytingCTXPspace} is equivalent to \ref{normalXcoHeytingCTZPspace}. But this follows from $X^{\min},X^{\max}\subseteq Z$ and the fact that a spectral space is Boolean precisely when all points are minimal and maximal. \end{proof} \par \goodbreak \noindent We now focus on $\Spec (C(T))$ and $\zSpec (C(T))$. \begin{FACT}{void}{RemarksCozeroComplemented}\textbf{Cozero complementation}\quad A completely regular space is called \notion[]{cozero complemented} if for every cozero set $U$ of $T$ there is a cozero set $V$ of $T$ with $U\cap V=\0$ such that $U\cup V$ is dense in $T$. Since $\Coz(T)\cong \qcop(\zSpec C(T))$ and $T$ is patch dense in $\zSpec C(T)$ (see \ref{ContF1}\ref{ContFz}), this is equivalent to saying that for every open and \qc\ subset of $\zSpec C(T)$ there is some open and \qc\ subset $V$ of $\zSpec C(T)$ with $U\cap V=\0$ such that $U\cup V$ is dense in $\zSpec C(T)$. Hence by \ref{cozeroCompInSpec}, $T$ is cozero complemented if and only if $(\zSpec C(T))^{\min}$ is \qc. \par \noindent The terminology and the original proof of the equivalence here is from \cite{HenJer1965} and various further equivalent conditions may be found in \cite[Theorem 1.3]{HenWoo2004}. For cozero complementation in more general contexts we refer to \cite{BhDrMc2011,KLMS2009,MarZen2003,SchTre2010}. \par \smallskip We see from \ref{pscompBasic} that cozero complementation of $T$ is a weakening of $\Spec C(T)$ being a PC-space, which is obtained from dropping condition \ref{pscompBasicCharPC}\ref{pscompBasicCharPCreg} there. \par \end{FACT}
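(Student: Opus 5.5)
The plan is to transfer the defining condition of cozero complementation from $T$ to $Z:=\zSpec C(T)$ through the lattice isomorphism $\qcop(Z)\lra \Coz(T)$, $W\mapsto W_T:=W\cap T$, recorded at the end of \ref{ContF1}. Then I would invoke Remark \ref{cozeroCompInSpec} for the spectral space $Z$. That remark says that $Z^{\min}$ is \qc\ if and only if for every $U\in\qcop(Z)$ there is some $V\in\qcop(Z)$ with $U\cap V=\0$ and $U\cup V$ dense in $Z$. So the whole proof reduces to checking that, for $U,V\in\qcop(Z)$, the conditions ``$U\cap V=\0$'' and ``$U\cup V$ dense in $Z$'' are equivalent to ``$U_T\cap V_T=\0$'' and ``$U_T\cup V_T$ dense in $T$'', respectively. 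Since every cozero set is of the form $U_T$, this gives the claimed equivalence.

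The disjointness part is immediate. The map $W\mapsto W_T$ is a bounded lattice isomorphism, so it preserves $\cap$ and $\0$. Hence $U\cap V=\0$ if and only if $(U\cap V)_T=\0$, i.e.\ $U_T\cap V_T=\0$. Equally, $W:=U\cup V\in\qcop(Z)$ satisfies $W_T=U_T\cup V_T$.

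For density I would argue with bases on both sides. Because $\qcop(Z)$ is a basis of $Z$, the set $W$ is dense in $Z$ if and only if there is no nonempty $W'\in\qcop(Z)$ with $W'\cap W=\0$. Applying the isomorphism, including that it reflects nonemptiness, this holds if and only if there is no nonempty cozero set $O$ with $O\cap W_T=\0$. Since $T$ is completely regular, $\Coz(T)$ is a basis of $T$, so this last condition says exactly that $W_T$ is dense in $T$. Patch density of $T$ in $Z$ from \ref{ContF1}\ref{ContFz} is what makes the isomorphism available in the first place, but it is not otherwise needed here.

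I expect the only point requiring care to be this density translation. One must use both that the isomorphism detects emptiness and that cozero sets form a basis of $T$. Density in $Z$ cannot be tested pointwise on $T$ directly, since $Z$ contains many points outside $T$. Once this is settled, Remark \ref{cozeroCompInSpec} finishes the proof.
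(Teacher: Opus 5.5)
Your proof is correct and follows the paper's route: you transfer the defining condition along $\qcop(Z)\cong\Coz(T)$ with $Z=\zSpec C(T)$ and then apply Remark~\ref{cozeroCompInSpec}. Your basis argument for density spells out what the paper compresses into its appeal to patch density of $T$ in $Z$, which is exactly what makes $W\mapsto W\cap T$ injective and hence lets it detect nonemptiness. You do not treat the statement's closing observation, but it only needs $(\Spec C(T))^{\min}=Z^{\min}$ (minimal prime ideals of $C(T)$ are z-ideals, \ref{ContF1}\ref{ContFz}); with that, cozero complementation is precisely condition \ref{pscompBasicCharPC}\ref{pscompBasicCharPCcomp} of \ref{pscompBasic} for $\Spec C(T)$.
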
 \begin{FACT}{:Pseudocomplementation of cozero sets}{pcZ} Let $Z=\zSpec C(T)$. Recall that $\qcop(\zSpec C(T)) \cong \Coz(T)$, and hence all pseudocomplementation conditions considered here are on $\Coz(T)$. Now $\Coz(T)$ is pseudocomplemented provided for each cozero set $U$ of $T$ there is a largest cozero set $V$ of $T$ with $U\cap V=\0$. Using complete regularity of $T$, this is the same as saying that $T\setminus \overline{U}$ itself is a cozero set. Hence, \[ \zSpec(C(T))\text{ is a PC-space}\iff \forall\, U\in \Coz(T): \overline{U}\text{ is a zero set}. \] Such spaces are called \textit{weak Oz-spaces} in \cite{Aull1984}, see also \cite{BDGW2009}. \par For instance, \textit{Oz-spaces} in the sense of \cite{Blair1976} (like perfectly normal spaces, i.e., those spaces for which $\Coz(T)=\CO(T)$) and metric spaces are weak Oz spaces. \end{FACT}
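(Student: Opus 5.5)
The plan is to reduce the statement to a purely lattice-theoretic condition on $\Coz(T)$, and then verify that condition directly using complete regularity of $T$.

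\emph{Reduction.} By definition, $Z=\zSpec C(T)$ is a PC-space iff $\qcop(Z)$ is pseudocomplemented (Theorem \ref{pscompBasic}). By \ref{ContF1}\ref{ContFz}, taking preimages under $T\into Z$ gives a lattice isomorphism $\qcop(Z)\lra\Coz(T)$, and pseudocomplementation is preserved under lattice isomorphisms. So it suffices to prove the following claim. For a cozero set $U$, the pseudocomplement $U^*$ exists in $\Coz(T)$ iff $T\setminus\overline{U}$ is a cozero set. Moreover, in that case $U^*=T\setminus\overline U$. Since $\overline U$ is a zero set exactly when its complement is a cozero set, this yields the stated equivalence.

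\emph{Key observation.} Let $V$ be any cozero set with $U\cap V=\0$. Then $V$ is open and disjoint from $U$, hence $V\subseteq T\setminus\overline U$.

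\emph{The direction ``$\Leftarrow$''.} Suppose $W:=T\setminus\overline U$ is cozero. Then $W\cap U=\0$. By the key observation, every cozero $V$ disjoint from $U$ satisfies $V\subseteq W$. Hence $W$ is the largest such set, i.e.\ $W=U^*$.

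\emph{The direction ``$\Rightarrow$''.} Suppose $V=U^*$ exists in $\Coz(T)$. The key observation gives $V\subseteq T\setminus\overline U$. For the reverse inclusion, take $x\notin\overline U$. Complete regularity provides a continuous $f\colon T\to\R$ with $f(x)=1$ and $f|_{\overline U}=0$. Then $T\setminus Z(f)$ is a cozero set containing $x$ and disjoint from $U$. By maximality of $V$ it is contained in $V$, so $x\in V$. Thus $V=T\setminus\overline U$, and in particular $T\setminus\overline U$ is cozero.

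No step should be a real obstacle. The only point needing care is the use of complete regularity in the ``$\Rightarrow$'' direction, which shows the pseudocomplement, when it exists, must be exactly $T\setminus\overline U$.
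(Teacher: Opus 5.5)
Your proposal is correct and follows the paper's argument. You transfer the question along the isomorphism $\qcop(Z)\cong\Coz(T)$ and use complete regularity to see that the pseudocomplement of a cozero set $U$, when it exists, can only be $T\setminus\overline{U}$; you simply write out what the paper compresses into the phrase ``using complete regularity of $T$''. The closing examples, which you leave aside and the paper only asserts, also follow directly: metric spaces are perfectly normal, so every closed set is a zero set, and in an Oz-space every regular closed set, in particular $\overline{U}$, is a zero set.
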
 \smallskip\noindent In order to characterize when $\Spec C(T)$ is a PC-space, we require the following: \par \belowpdfbookmark{When Spec C(T) is a PC-space}{BookmarkCofXConvexSemiHeyting} \begin{FACT}{Proposition}{CofTConvexSemiHeyting} Let $T$ be a completely regular space and let $X\subseteq \Spec C(T)$ be patch closed and convex for specialization, i.e.~$\Dp\subseteq \Dq\subseteq \Dr$ and $\Dp,\Dr\in X$ imply $\Dq\in X$. The following are equivalent: \begin{enumerate}[{\rm (i)}] \item\labelx{CofTConvexSemiHeytingPC} $X$ is a PC-space. \item \labelx{CofTConvexSemiHeytingStoneI} $\qcop(X)$ is a Stone algebra. \item\labelx{CofTConvexSemiHeytingStoneII} $X^{\min}$ is patch closed and $X$ is stranded. \end{enumerate} \end{FACT}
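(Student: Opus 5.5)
Since $X$ is patch closed in $\Spec C(T)$, it is a spectral space with the subspace topology, and its specialization order is the restriction of inclusion. By \ref{ContF1}\ref{ContFSpecIsRootSystem}, $\Spec C(T)$ is a spectral root system, and patch closed subspaces of spectral root systems are again spectral root systems. So $X$ is a spectral root system, and closures of points in $X$ are chains.

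The implication (ii)$\Ra$(i) is immediate, because Stone algebras are pseudocomplemented.

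For (iii)$\Ra$(ii) I will use \ref{QCOPStoneLattice}, via (iii)$\Ra$(i) there, or via (iv). Assume $X$ is stranded and $X^{\min}$ is patch closed. Strandedness means $X$ is a forest, so every point has a unique minimal point below it. Hence $X_\inv$ is normal. Together with patch closedness of $X^{\min}$, condition \ref{QCOPStoneLattice}\ref{QCOPStoneLatticeNormProCon} holds, and therefore $\qcop(X)$ is a Stone algebra.

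The implication (i)$\Ra$(iii) is the main step. By \ref{pscompBasic}, $X^{\min}$ is compact and hence patch closed. Since $X$ is already a root system, it remains to show that $X$ is a forest. That is, no point $\Dq\in X$ lies above two distinct minimal points $\Dp_1\neq\Dp_2$ of $X$. I would argue by contradiction, using the specific structure of $C(T)$.

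\begin{enumerate}[(1)]
\item Reduce to the case $\Dq$ maximal in $X$, by passing upward along the chain $\overline{\{\Dq\}}\cap X$.
\item Choose $f\in C(T)$ with $f\in\Dp_1\setminus\Dp_2$. Using the lattice-ordered structure, replace $f$ by functions $g,h\geq 0$ with $gh=0$ such that $g\notin\Dp_1$ and $h\notin\Dp_2$. Primes of $C(T)$ contain one of $g,h$ when $gh=0$, so we can arrange $g\in\Dp_2$ and $h\in\Dp_1$; for instance take $g=(f-c)^{+}$ and $h=(f-c)^{-}$-type truncations.
\item Put $U=D(g)\cap X\in\qcop(X)$. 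Pseudocomplementation gives, by \ref{pscompBasic}(ii), that $\overline{U}={\uparrow}U$ is constructible in $X$. Then $U^{*}=X\setminus{\uparrow}U$ is compact open, and $U\cup U^{*}$ is dense in $X$.
\item Note $\Dp_1\in U$, so $\Dq\in{\uparrow}U$. On the other hand $\Dp_2\in D(h)\cap X$, which is disjoint from $U$. Also $\Dp_2\notin{\uparrow}U$, because ${\downarrow}\Dp_2=\{\Dp_2\}$ in $X$.
\item Derive a contradiction from the fact that $\Dq$ lies in the closure of both $U$ and the open set $X\setminus{\uparrow}U\ni\Dp_2$. The aim is to show that the compact open $U^{*}$ must omit $\Dq$ while containing $\Dp_2$, and that constructibility of ${\uparrow}U$ is incompatible with this. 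For this I would use the convexity assumption on $X$ together with \ref{ContF2}: the prime ideals of $C(T)$ between $\Dp_2$ and $\Dq$, and those above the intersection, form chains of the special form described there. The key fact is that in $C(T)$ the sum of two prime ideals that are incomparable but lie below a common maximal ideal fails to be prime in a way detectable by compact opens. Concretely, I expect to show that ${\uparrow}U\cap{\uparrow}(X\setminus{\uparrow}U)$ contains a nonconstructible set near $\Dq$, by producing functions $g_n$ with $D(g_n)$ shrinking towards $\Dq$.
\end{enumerate}

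This last step is where I expect the real difficulty. I would first try the classical argument from Gillman--Jerison that $T$ being an F-space follows from $\overline{D(g)}$ being open, adapted to the convex patch closed subspace $X$.
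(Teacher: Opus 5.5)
\textbf{The gap is step (5), which carries all the content of the proposition.}

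Your setup and the implications (ii)$\Rightarrow$(i) and (iii)$\Rightarrow$(ii) are correct and agree with the paper: strandedness makes $X_\inv$ normal, then apply Theorem~\ref{QCOPStoneLattice}(iv). The start of (i)$\Rightarrow$(iii) is also as in the paper: $X^{\min}$ is patch closed by Theorem~\ref{pscompBasic}, and you separate the two minimal points by some $U\in\qcop(X)$. Your constant-$c$ truncation is unreliable, since $f$ can be infinitesimal modulo $\Dp_2$. Use instead $g=(b-a)^+$ and $h=(a-b)^+$ with $0\le a\in\Dp_1\setminus\Dp_2$ and $0\le b\in\Dp_2\setminus\Dp_1$, or simply use that $X^{\min}$ is Hausdorff.

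The configuration you reach in (4) gives no contradiction on its own. Take the three-point spectral space with two minimal points below one maximal point. It is a PC-space, because its lattice of compact opens is finite and hence Heyting. With $U$ one of the minimal points, it has exactly $\Dq\in{\uparrow}U$, $\Dp_2\in U^*$, $\Dq\notin U^*$, and ${\uparrow}U$ constructible. So the contradiction must come from $C(T)$ itself.

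The ``key fact'' you propose is backwards. In $C(T)$, the sum of two incomparable primes lying in a common prime is always prime, and in fact a prime z-ideal (\cite[Proposition 3.9]{Tressl2006}, \cite[14B]{GilJer1960}). This is precisely what the proof uses. Pushing $\Dq$ up to a maximal point does not help either; what matters is the lowest point at which the chain above $\Dp_2$ enters ${\uparrow}U$. Your fallback via the Gillman--Jerison F-space argument does not fit. It uses that closures of cozero sets are open, which is the Stone condition you are trying to prove. PC only gives constructibility of ${\uparrow}U$, and $X$ is an arbitrary convex patch closed subset.

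\textbf{The missing argument (the paper's), in your notation.}
Let $C$ be the set of primes in $X$ that contain $\Dp_2$, lie in ${\uparrow}U$, and are contained in $\Dp_1+\Dp_2$. Here $\Dp_1+\Dp_2$ is prime, and it lies in $X$ by convexity because $\Dp_1\subseteq\Dp_1+\Dp_2\subseteq\Dq$. Then $C$ is a nonempty patch closed chain, so it has a least element $\Dr$. Moreover $\Dr\neq\Dp_2$, because $\Dp_2\notin{\uparrow}U$.

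Since $U$ is closed under generalization, pick $\Dq_0\in U\cap X^{\min}$ with $\Dq_0\subseteq\Dr$. Then $\Dq_0$ and $\Dp_2$ are incomparable, so $\Dp_2+\Dq_0$ is a prime z-ideal. It belongs to $C$ and is contained in $\Dr$, so minimality gives $\Dr=\Dp_2+\Dq_0$, and $\Dr$ is a z-ideal.

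Prime z-ideals have no immediate predecessor. If $I\subsetneq\Dr$ is prime and $f\in\Dr\setminus I$, then $\sqrt{I+fC(T)}$ is a prime strictly between $I$ and $\Dr$ (\cite[Lemma 14.1]{Tressl2007}). Hence the chain $S$ of primes $\Dp'$ with $\Dp_2\subseteq\Dp'\subsetneq\Dr$ has no largest element. Its union is therefore $\Dr$, so $\Dr\in\overline{S}^{\con}$.

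By convexity and minimality of $\Dr$, we have $S\subseteq X\setminus{\uparrow}U$. This set is patch closed because ${\uparrow}U$ is constructible. So $\Dr\in X\setminus{\uparrow}U$, which contradicts $\Dr\in{\uparrow}U$.
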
 \begin{proof} \ref{CofTConvexSemiHeytingStoneII}$\Leftrightarrow$\ref{CofTConvexSemiHeytingStoneI} follows from \ref{QCOPStoneLattice} because the spectral root system $X$ is stranded iff $X_\inv$ is normal. The implication \ref{CofTConvexSemiHeytingStoneI}$\Ra $\ref{CofTConvexSemiHeytingPC} holds by definition. \par \smallskip\noindent \ref{CofTConvexSemiHeytingPC}$\Ra $\ref{CofTConvexSemiHeytingStoneII} Assume that $X$ is a PC-space. By \ref{pscompBasic} we know that $X^{\min}$ is patch closed. Suppose $X$ is not stranded. Since $X$ is a spectral root system, there must be points $\Dp\neq \Dq$ in $X^{\min}$ and a common specialization of $\Dp$ and $\Dq$ in $X$. Because $X$ is convex, we know $1\notin \Dp+\Dq$. By \cite[Proposition 3.9]{Tressl2006} (and \cite[14B]{GilJer1960}), the sum $\Dp+\Dq$ of the ideals $\Dp$ and $\Dq$ is a z-prime ideal. As $X$ is convex, we get $\Dp+\Dq\in X$. Since $X^{\min}$ is Hausdorff, there is some $U\in\qcop(X)$ with $\Dq\in U$ and $\Dp\notin U$. Since $\Dp$ is minimal in $X$, we get $\Dp\notin \Spez(U)\cap X=\overline{U}\cap X$. Let $C=\overline{\{\Dp\}}\cap \overline{U}\cap \Gen(\Dp+\Dq)$. Since $\Spec C(T)$ is a spectral root system, $C$ is a chain, which is not empty because $\Dp+\Dq\in C$. Because $X$ is convex, and $\Dp,\Dp+\Dq\in X$, we know $C\subseteq X$. \par Now, the nonempty patch closed chain $C$ has a smallest element, which we denote by $\Dr$. As $\Dr\in \overline{U}$ there is some $\Dq_0\in U\cap X^{\min}$ with $\Dq_0\subseteq \Dr$. As $\Dp\notin U$, the prime ideals $\Dp$ and $\Dq_0$ are incomparable. By \cite[Proposition 3.9]{Tressl2006} again, $\Dp+\Dq_0$ is a z-radical prime ideal of $C(T)$. Therefore, $\Dp+\Dq_0\in \Gen(\Dr)\cap C$ and the minimality of $\Dr$ in $C$ implies that $\Dr=\Dp+\Dq_0$ is a z-radical prime ideal with $\Dr\in X$. Let \[ S=\bigl(\overline{\{\Dp\}}\cap\Gen(\Dr)\bigr)\setminus \{\Dr\}. \] Here is the depiction of the situation, where the lines represent inclusion, the blue points are in $U$ and the green points are in $C$. \begin{center} \begin{tikzpicture}[scale=0.7] \coordinate (p) at (0,0); \coordinate (q0) at (2,0); \coordinate (q) at (6,0); \coordinate (r) at (1,1); \coordinate (pq) at (3,3); \fill[red] (p) circle (2pt) node[below] {$\Dp$}; \fill[blue] (q0) circle (2pt) node[below] {$\Dq_0$}; \fill[blue] (q) circle (2pt) node[below] {$\Dq$}; \fill[Green] (r) circle (2pt) node[right] {\,$\Dr=\Dp+\Dq_0$}; \fill[Green] (pq) circle (2pt) node[above] {$\Dp+\Dq$}; \draw[red,shorten >=2pt] (p) -- (r); \draw[black,shorten >=2pt] (q0) -- (r); \draw[black,shorten >=2pt] (q) -- (pq); \draw[Green,shorten >=2pt] (r) -- (pq); \node[red] at (0.3,0.9) {$S$}; \node[Green] at (1.8,2.3) {$C$}; \end{tikzpicture} \end{center} \par \noindent Since $\Dp\neq \Dr$, we get $\Dp\in S\subseteq \overline{\{\Dp\}}$, hence $S$ is a nonempty chain. Since $\Dp,\Dr\in X$ and $X$ is convex, we also see that $S\subseteq X$. The minimality of $\Dr$ in $C$ implies $S\cap \overline{U}=\0$, hence $S\subseteq X\setminus \overline{U}$. Since $X$ is a PC-space, the set $\overline{U}\cap X$ is constructible in $X$, so $X\setminus \overline{U}$ is patch closed. Because $\Dr\in X\cap \overline{U}$, we obtain $\Dr\notin \overline{\,S\,}^\con$. \par We now show that $\Dr\in \overline{\,S\,}^\con$, which gives the desired contradiction. Since $S$ is a chain, it suffices to show that $S$ has no largest element with respect to inclusion because then the chain $S$ has the supremum $\Dr$ and this has to be in $\overline{\,S\,}^\con$. \par In order to see that $S$ has no largest element, let $I\in \Spec(C(T))$ and let $f\in C(T)$ with $I\subseteq \Dr$ and $f\in \Dr\setminus I$. Then $I\subsetneq J=\sqrt{I+f\mal C(T)}\in \Spec (C(T))$, but $J\neq \Dr$ by \cite[Lemma 14.1]{Tressl2007} since $\Dr$ is a z-ideal. \end{proof} \par \goodbreak \begin{FACT}{Theorem}{CofTSemiHeyting} Let $T$ be a completely regular space. The following are equivalent: \begin{enumerate}[{\rm (i)}] \item\labelx{CofTSemiHeytingSH} $\Spec(C(T))$ is a PC-space. \item\labelx{CofTSemiHeytingStone} $\qcop(\Spec C(T))$ is a Stone algebra. \item\labelx{CofTSemiHeytingFspaceQC} $\Spec(C(T))^{\min}$ is patch closed and $T$ is an $F$-space, i.e.~$\Spec(C(T))$ is stranded.\footnote{See \cite[Theorem in 14.25]{GilJer1960} for more information on F-spaces.} \item\labelx{CofTSemiHeytingBasDisc} $T$ is basically disconnected. \item\labelx{CofTSemiHeytingdedekind} The poset $C(T)$ is Dedekind $\sigma$-complete. \item\labelx{CofTSemiHeytingboundedIntervals} For every $f>0$ in $C(T)$, the interval $[0,f]$ is pseudocomplemented. \end{enumerate} \end{FACT}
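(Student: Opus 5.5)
The first three conditions are handled by Proposition \ref{CofTConvexSemiHeyting} applied to $X=\Spec C(T)$ itself, which is trivially patch closed and convex for specialization. This gives \ref{CofTSemiHeytingSH}$\Leftrightarrow$\ref{CofTSemiHeytingStone}$\Leftrightarrow$\ref{CofTSemiHeytingFspaceQC}, using \ref{ContF1}\ref{ContFFspace} to identify ``stranded'' with ``$T$ is an F-space''. The remaining work is to link these with \ref{CofTSemiHeytingBasDisc}, and then to connect \ref{CofTSemiHeytingBasDisc} with the order-theoretic conditions \ref{CofTSemiHeytingdedekind} and \ref{CofTSemiHeytingboundedIntervals}.

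For \ref{CofTSemiHeytingStone}$\Leftrightarrow$\ref{CofTSemiHeytingBasDisc} I would use Application \ref{QCOPStoneLatticeApplyStone}. Since $C(T)$ is reduced, $\qcop(\Spec C(T))$ is a Stone algebra iff $C(T)$ is a Baer ring, i.e.\ for every $f$ there is an idempotent $e$ with $\Ann(f)=e\mal C(T)$. Idempotents of $C(T)$ are the characteristic functions of clopen subsets of $T$. The annihilator of $f$ consists of the functions vanishing on $\overline{T\setminus Z(f)}$, that is, $\Ann(f)=\{g\st \mathrm{coz}(g)\subseteq T\setminus\overline{\mathrm{coz}(f)}\}$. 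If $\overline{\mathrm{coz}(f)}$ is clopen, then $e=\chi_{T\setminus\overline{\mathrm{coz} f}}$ works, since $g=eg$ for every $g\in\Ann(f)$. Conversely, if $\Ann(f)=eC(T)$ with $e=\chi_K$, then $ef=0$ gives $K\subseteq T\setminus\overline{\mathrm{coz}f}$. The reverse inclusion uses complete regularity: each point $x$ of the open set $T\setminus\overline{\mathrm{coz} f}$ is the cozero point of some $g\in\Ann(f)$, and $g=eg$ forces $x\in K$. Hence $\overline{\mathrm{coz} f}=T\setminus K$ is open. As every cozero set has the form $\mathrm{coz}(f)$, this is exactly basic disconnectedness.

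For \ref{CofTSemiHeytingBasDisc}$\Leftrightarrow$\ref{CofTSemiHeytingdedekind} I would simply cite the classical theorem of Gillman--Jerison (Theorem 3N.4 / 16.? in \cite{GilJer1960}, Nakano's theorem) that $C(T)$ is conditionally $\sigma$-complete iff $T$ is basically disconnected. Should a self-contained argument be desired, it runs as follows. For the forward direction, given $U=\mathrm{coz}(f)$ with $0\le f\le 1$, the bounded sequence $\min(1,nf)$ has a supremum $h$, and one shows $h=\chi_{\overline U}$, which must therefore be continuous. For the converse, construct suprema of bounded increasing sequences via clopen closures of cozero sets of the form $\{\sup_n f_n>r\}$.

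For \ref{CofTSemiHeytingboundedIntervals}, the interval $[0,f]$ is a distributive lattice under pointwise $\min$ and $\max$. The pseudocomplement of $g$ would be the largest $h\in[0,f]$ with $\min(g,h)=0$, i.e.\ the largest $h\le f$ with $\mathrm{coz}(h)\subseteq T\setminus\overline{\mathrm{coz}(g)}$. If $T$ is basically disconnected, $h=f\mal\chi_{T\setminus\overline{\mathrm{coz} g}}$ is continuous and is the pseudocomplement, giving \ref{CofTSemiHeytingBasDisc}$\Rightarrow$\ref{CofTSemiHeytingboundedIntervals}. Conversely, take $f=1$ and $g\in[0,1]$ with prescribed cozero set $U$, and suppose $h$ is the pseudocomplement. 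For any $x\notin\overline U$, complete regularity gives some $k\in[0,1]$ with $k(x)=1$ vanishing on $\overline U$, and $\max(h,k)$ is again disjoint from $g$, so $h\ge k$ and hence $h(x)=1$. Therefore $h=1$ on $T\setminus\overline U$ and $h=0$ on $\overline U$, by continuity and $\min(g,h)=0$. So $h$ is the characteristic function of $T\setminus\overline U$, and continuity of $h$ makes $\overline U$ clopen.

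The main obstacle I expect is the Dedekind $\sigma$-completeness step, since it requires identifying suprema in $C(T)$ rather than pointwise. If citing Gillman--Jerison is acceptable, I will do so; otherwise the careful construction of the supremum will need the most work.
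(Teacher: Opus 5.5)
Your proof is correct. For (i)$\Leftrightarrow$(ii)$\Leftrightarrow$(iii) it is the paper's argument: Proposition \ref{CofTConvexSemiHeyting} applied to $X=\Spec C(T)$.

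The difference is in how you attach (iv). The paper goes through (iii). It quotes Henriksen--Jerison (for an F-space, the minimal spectrum is compact iff $T$ is basically disconnected) together with Gillman--Jerison 14N (basically disconnected spaces are F-spaces). It remarks that morally this is Theorem \ref{QCOPStoneLattice} applied to $\zSpec C(T)$. You instead go through (ii), using the Baer-ring reformulation of \ref{QCOPStoneLatticeApplyStone} and the elementary identity $\Ann(f)=\{g \st \mathrm{coz}(g)\subseteq T\setminus\overline{\mathrm{coz}(f)}\}$. Your verification is sound in both directions.

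What your route buys is self-containment within the paper: this link needs no F-space or z-ideal theory. Moreover, combined with only the (ii)$\Leftrightarrow$(iii) part of Proposition \ref{CofTConvexSemiHeyting}, which rests solely on Theorem \ref{QCOPStoneLattice}, your chain re-derives both cited classical facts as corollaries rather than using them as inputs. The paper's route instead foregrounds the topological content of (iii).

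For (v) and (vi) the paper simply cites Luxemburg--Zaanen. Your direct proof of (iv)$\Leftrightarrow$(vi) is correct, and your argument for (v)$\Rightarrow$(iv) via the supremum of $\min(1,nf)$ is fine. Your (iv)$\Rightarrow$(v) is only a sketch, though it does go through:
\begin{itemize}
\item the sets $U_r=\bigcup_n\{f_n>r\}$ are cozero sets, hence have clopen closures;
\item $x\mapsto\sup\{r\in\mathbb{Q}\st x\in\overline{U_r}\}$ is then continuous and is the least upper bound.
\end{itemize}
If you rely on the citation instead, pin it down (your ``16.?'' is unresolved) or cite Luxemburg--Zaanen as the paper does.
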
 \noindent \texttt{Remark.} The equivalence of the last four conditions is not new and is recorded here to showcase the topological impact of the first two conditions. \begin{proof} \ref{CofTSemiHeytingSH}$\Leftrightarrow $\ref{CofTSemiHeytingStone}$\Leftrightarrow $\ref{CofTSemiHeytingFspaceQC} holds by \ref{CofTConvexSemiHeyting} applied to $X=\Spec C(T)$. The equivalence \ref{CofTSemiHeytingFspaceQC}$\Leftrightarrow$\ref{CofTSemiHeytingBasDisc} essentially is the equivalence of the conditions \ref{QCOPStoneLatticeNormProCon} and \ref{QCOPStoneLatticeStone} in \ref{QCOPStoneLattice} applied to the spectral root system $\zSpec (C(T))$ because the definition of basically disconnected says that $\Coz(T)$ is a Stone algebra and $\zSpec (C(T))$ and $\Spec (C(T))$ have the same minimal spectrum and one of them is stranded iff the other one is. However, our assertion \ref{CofTSemiHeytingFspaceQC}$\Leftrightarrow$\ref{CofTSemiHeytingBasDisc} here is not new: Recall from \cite[Theorem 5.3(e)]{HenJer1965} that for any $F$-space $T$, $\Spec(C(T))^{\min}$ is patch closed if and only if $T$ is basically disconnected.\footnote{Observe that any basically disconnected space is cozero complemented: If $U$ is a cozero set, then $\overline{U}$ is open, hence is clopen and so is the zero set of its characteristic function. In comparison: $\Spec(C(T))$ is Boolean if and only if all cozero sets are closed, cf. \cite[14.29]{GilJer1960}.} This shows \ref{CofTSemiHeytingFspaceQC}$\Ra$\ref{CofTSemiHeytingBasDisc}. Conversely, by \cite[14N, p.~215]{GilJer1960}, every basically disconnected space is an $F$-space, hence \ref{CofTSemiHeytingBasDisc} also implies \ref{CofTSemiHeytingFspaceQC}. \par \smallskip\noindent The equivalence of \ref{CofTSemiHeytingBasDisc} with each of \ref{CofTSemiHeytingdedekind} and \ref{CofTSemiHeytingboundedIntervals} is not new and belongs to the theory of Abelian $\ell$-groups: In \cite[pp.~283--287]{LuxZaa1971} it is shown that a completely regular space $T$ is basically disconnected (named `principal projection property' in that source) if and only if $C(T)$, seen as a poset, is Dedekind $\sigma$-complete, if and only if $C(T)$, viewed as a lattice ordered group, is \textbf{projectable}, i.e.~for every $f>0$ in $C(T)$ the interval $[0,f]$ is pseudocomplemented. For a reference and some explanation of the terminology, see \cite[Proposition 2.1]{Wynne2007}. \end{proof} \begin{FACT}{Corollary}{SpecCbetaXSemiH} $\Spec C(T)$ is a PC-space if and only if $\Spec C(\beta T)$ is a PC-space. \end{FACT}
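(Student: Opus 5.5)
The plan is to reduce the corollary to a purely topological statement about $T$ and $\beta T$ using Theorem \ref{CofTSemiHeyting}. Applied to $T$, that theorem says $\Spec C(T)$ is a PC-space iff $T$ is basically disconnected. Applied to the compact (hence completely regular) space $\beta T$, it says $\Spec C(\beta T)$ is a PC-space iff $\beta T$ is basically disconnected. So it suffices to show that $T$ is basically disconnected if and only if $\beta T$ is.

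For this I would use the standard fact that $C^*(T)\cong C(\beta T)$, via restriction $\iota$ (Fact \ref{ContF2}\ref{ContFCinBeta}). Condition \ref{CofTSemiHeytingdedekind} of Theorem \ref{CofTSemiHeyting} gives the cleanest route: $T$ is basically disconnected iff the poset $C(T)$ is Dedekind $\sigma$-complete. I claim this holds iff $C^*(T)$ is Dedekind $\sigma$-complete, and hence iff $C(\beta T)$ is.

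The claim should be routine. A countable family in $C^*(T)$ that is bounded above in $C^*(T)$ is bounded above in $C(T)$. Its supremum in $C(T)$ lies between one member of the family and the bound, so it is bounded and therefore lies in $C^*(T)$, where it is still the supremum. Conversely, given a countable family $(f_n)$ in $C(T)$ bounded above by $g$, fix some $f_0$ in the family. Compose everything with the order isomorphism $\arctan$ of $\R$ onto $(-\pi/2,\pi/2)$. This turns the family into bounded functions, and their supremum $h$ in $C^*(T)$ exists. Moreover $\arctan f_0\le h\le \arctan g$, so $h$ takes values in $(-\pi/2,\pi/2)$ and $\tan\circ h\in C(T)$. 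Since $\arctan$ is an order isomorphism onto its image, $\tan\circ h$ is the supremum of the $f_n$ in $C(T)$.

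The only point needing care, and the main obstacle, is this converse direction. One must check that the supremum $h$ taken in $C^*(T)$ really stays strictly inside the open interval, which is exactly what the bounds $\arctan f_0\le h\le\arctan g$ ensure. If one prefers a topological route instead, one can invoke the known fact that cozero sets of $\beta T$ meet $T$ densely and that closures commute with the inclusion. However, the lattice-theoretic argument above avoids that and relies only on Theorem \ref{CofTSemiHeyting}.
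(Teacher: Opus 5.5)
Your proof is correct, but it takes a different route from the paper's main argument.

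\textbf{The paper's route.} The paper uses condition (iii) of Theorem \ref{CofTSemiHeyting}. By Fact \ref{ContF2}(ii), $\Spec C(T)$ and $\Spec C(\beta T)$ have homeomorphic minimal spectra, so patch closedness of the minimal spectrum transfers. By \cite[14.25]{GilJer1960}, $T$ is an $F$-space iff $\beta T$ is. The paper's alternative argument uses (iv), quoting \cite[6M]{GilJer1960} that $T$ is basically disconnected iff $\beta T$ is.

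\textbf{Your route.} You share that alternative's reduction to a property of $T$ versus $\beta T$. Instead of quoting 6M, you prove the transfer yourself through condition (v). The key observation is that $f\mapsto\arctan\circ f$ is an order isomorphism of $C(T)$ onto $\{h\in C^*(T) : |h|<\pi/2\}$, and this set is order-convex in $C^*(T)$. That convexity is exactly why the bounds $\arctan f_0\le h\le\arctan g$ suffice, and both directions check out.

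\textbf{Two points to make explicit.} First, the ring isomorphism $C(\beta T)\to C^*(T)$ is also an order isomorphism. This holds because restriction to the dense set $T$ reflects $\le$, or alternatively because $f\ge 0$ iff $f$ is a square. Second, Dedekind $\sigma$-completeness also requires infima of countable sets bounded below. These follow from your supremum case via $f\mapsto -f$, or by the same $\arctan$ argument.

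\textbf{What each buys.} Your argument is elementary and self-contained modulo the Luxemburg--Zaanen characterization (v), which is where the real topological content sits. The paper's main argument stays inside the spectral picture and shows directly why the two PC conditions agree: the minimal spectra are homeomorphic and strandedness transfers. That fits the paper's theme of reading pseudocomplementation off the spectrum.
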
 \begin{proof} The spaces $\Spec C(T)$ and $\Spec C(\beta T)$ have homeomorphic minimal spectra, see \ref{ContF2}\ref{ContFstranded}. By \cite[Theorem~in~14.25]{GilJer1960}, $T$ is an $F$-space if and only if $\beta T$ is an $F$-space. Hence, the desired equivalence follows from \ref{CofTSemiHeyting}. \par Alternatively one can use \cite[6M, p.~96]{GilJer1960}, which says that $T$ is basically disconnected if and only if $\beta T$ is basically disconnected and then deploy \ref{CofTSemiHeyting}. \end{proof} \begin{FACT}{Corollary}{CofXnotHeytingConvSeqNormal} Let $T$ be a completely regular space. If $\Spec C(T)$ is a PC-space, then no sequence $(x_n)_{n\in\N}$ of distinct points of $T$ has a limit in $T$. In particular, each metrizable subspace of $T$ is discrete. \end{FACT}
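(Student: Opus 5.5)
By Theorem \ref{CofTSemiHeyting}, if $\Spec C(T)$ is a PC-space then $T$ is basically disconnected, and in particular an F-space. So it is enough to show that in a basically disconnected space no sequence of distinct points converges. The metrizable statement then follows at once. A metrizable subspace $M\subseteq T$ that is not discrete has a non-isolated point $x$, and $x$ is the limit of a sequence of distinct points of $M\setminus\{x\}$. That sequence also converges to $x$ in $T$, contradicting the first statement.

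Suppose, for contradiction, that $x_n\to x$ in $T$ with all $x_n$ distinct. Discarding at most one term, we may assume $x_n\neq x$ for all $n$. Since $T$ is Hausdorff, we can pass to a subsequence and pick pairwise disjoint open sets $W_n\ni x_n$ with $x\notin \overline{W_n}$. Here we use that $x_n\to x$ and that $x$ is distinct from all $x_n$, so each $x_n$ can be separated from $x$ and from the finitely many earlier points. Shrinking the $W_n$ inductively away from a neighbourhood of $x$ is routine.

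Next, using complete regularity, choose $f_n\in C(T)$ with $0\le f_n\le 2^{-n}$, $f_n(x_n)=2^{-n}$, and $f_n=0$ outside $W_n$. Now split the index set into evens and odds, and set $f=\sum_n f_{2n}$ and $g=\sum_n f_{2n+1}$. Both series converge uniformly, so $f,g\in C(T)$. The cozero sets $U=\Coz(f)$ and $V=\Coz(g)$ are disjoint, since the $W_n$ are pairwise disjoint. Moreover $x_{2n}\in U$ and $x_{2n+1}\in V$, so $x\in\overline U\cap\overline V$.

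In a basically disconnected space, $\overline U$ is clopen, so $T\setminus\overline U$ is open. Also $V\cap \overline U=\emptyset$, because $V$ is open and disjoint from $U$. But $x\in\overline U$, which is open, so the points $x_{2n+1}$ eventually lie in $\overline U$. They also lie in $V$, which is disjoint from $\overline U$, a contradiction. (Equivalently: in an F-space disjoint cozero sets are completely separated, so they cannot share a closure point, and this gives the same contradiction.)

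The main care point is constructing the disjoint neighbourhoods $W_n$ so that the sums are continuous at $x$. Uniform convergence takes care of continuity everywhere, so only disjointness is really needed. Disjointness can be arranged by induction: given $W_1,\dots,W_{n-1}$ whose closures miss $x$, the point $x$ has a neighbourhood $O$ avoiding $\overline{W_1}\cup\dots\cup\overline{W_{n-1}}$. Choose the next subsequence point inside $O$, and take $W_n\subseteq O$ with closure missing $x$.
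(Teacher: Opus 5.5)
Your proof is correct and follows the paper's route. The paper also uses Theorem~\ref{CofTSemiHeyting} to reduce to a property of $T$, and then simply cites Gillman--Jerison 14N.1 (no point of an F-space is the limit of a sequence of distinct points). You instead prove that fact directly, using the stronger property that $T$ is basically disconnected, so openness of $\overline{U}$ gives the contradiction immediately; you also spell out the metrizable consequence, which the paper leaves implicit.
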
 \begin{proof} By \GilJer{224}{14N.1}, no point of an F-space is the limit of a sequence of distinct points. Now apply \ref{CofTSemiHeyting}. \end{proof} \par \noindent As a consequence of \ref{CofXnotHeytingConvSeqNormal} we obtain: \begin{FACT}{Corollary}{semiHeytingMetric} If $T$ is a metric space, then $\Spec C(T)$ is a PC-space if and only if it is an Esakia space if and only if $T$ is discrete. \end{FACT}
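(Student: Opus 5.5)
We show that the three conditions form a cycle: ``$T$ discrete'' implies ``$\Spec C(T)$ is Esakia'', which implies ``$\Spec C(T)$ is a PC-space'', which implies ``$T$ discrete''.

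\emph{PC-space $\Ra$ discrete.} Suppose $\Spec C(T)$ is a PC-space. Then $T$ is a metrizable subspace of itself, so \ref{CofXnotHeytingConvSeqNormal} applies directly and shows that $T$ is discrete. One can also argue without quoting the corollary. In a metric space every non-isolated point $x$ is the limit of a sequence of distinct points, which one obtains by picking $x_n\in B(x,1/n)\setminus\{x,x_1,\dots,x_{n-1}\}$. By \ref{CofXnotHeytingConvSeqNormal} no such sequence has a limit in $T$, so there are no non-isolated points.

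\emph{Esakia $\Ra$ PC-space.} Every Heyting algebra is pseudocomplemented, since $a^*=a\to\perp$. Equivalently, in the diagram \ref{pseudoVariantsDiagram}, the condition $\mathrm{Cl}(\CK(X))\subseteq\CK(X)$ implies $\mathrm{Cl}(\qcop(X))\subseteq\CK(X)$. So this implication is immediate.

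\emph{Discrete $\Ra$ Esakia.} This is the step with real content, and the plan is to show that $\Spec C(T)$ is Boolean. If $T$ is discrete, then every subset of $T$ is a cozero set. Indeed, for $S\subseteq T$ the characteristic function $\chi_S$ is continuous and its cozero set is $S$. So every cozero set is closed, which makes $T$ a P-space. By the remark in the footnote of the proof of \ref{CofTSemiHeyting} (that is, \cite[14.29]{GilJer1960}), this means $\Spec C(T)$ is Boolean; alternatively one can use \ref{normalXcoHeytingCT}\ref{normalXcoHeytingCTXPspace}. Once $\Spec C(T)$ is Boolean, $\qcop(\Spec C(T))$ is a Boolean algebra, and hence a Heyting algebra. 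This closes the cycle.

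The hardest part is only making sure the direction ``discrete implies Boolean spectrum'' is correctly justified. For this one needs the standard fact that P-spaces are exactly the spaces in which all prime ideals of $C(T)$ are maximal. I will cite that fact rather than reprove it.
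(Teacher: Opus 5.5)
Your proposal is correct and follows essentially the paper's route: the paper obtains this corollary directly from \ref{CofXnotHeytingConvSeqNormal} for the direction ``PC $\Rightarrow$ discrete''. The remaining steps, which the paper leaves implicit, are exactly yours: a Heyting algebra is pseudocomplemented, and discrete $\Rightarrow$ $P$-space $\Rightarrow$ $\Spec C(T)$ Boolean $\Rightarrow$ Heyting.
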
 \begin{FACT}{Corollary}{PspaceViaVfNotOpeninMin} Let $T$ be a completely regular space. Then $T$ is a $P$-space if and only if $T$ is basically disconnected and ${\downarrow}V(f)\cap (\Spec C(T))^{\min}$ is open in $(\Spec C(T))^{\min}$ for every $f\in C(T)$. \end{FACT}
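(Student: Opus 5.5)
We prove both implications, working throughout in $X=\Spec C(T)$ and $M=X^{\min}$.

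\emph{Forward direction.} Suppose $T$ is a $P$-space. By \ref{normalXcoHeytingCT}, $X$ is Boolean. So every point is minimal and maximal, $M=X$, and ${\downarrow}V(f)=V(f)$. Since $V(f)$ is clopen in a Boolean space, ${\downarrow}V(f)\cap M$ is open in $M$. A $P$-space is basically disconnected, because cozero sets are closed (cf.\ the footnote in the proof of \ref{CofTSemiHeyting}). Hence the closure of a cozero set is that set itself, which is open.

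\emph{Backward direction.} Assume $T$ is basically disconnected and the openness condition holds. By \ref{CofTSemiHeyting}, $X$ is stranded, $M$ is patch closed, and $M$ is compact Hausdorff, with the Zariski and patch topologies agreeing on it. Since $X$ is stranded, every $x\in X$ lies above a unique minimal point $\rho(x)$. By \ref{QCOPStoneLattice}, the map $\rho:X\lra M$ is a continuous retraction. Moreover ${\downarrow}V(f)\cap M=\rho(V(f))$. Since $V(f)$ is closed, hence patch closed and compact in $X_\con$, and $\rho$ is spectral (\ref{QCOPStoneLattice}\ref{QCOPStoneLatticeProjection}), hence patch continuous, the set $\rho(V(f))$ is compact in the Hausdorff space $M$ and therefore closed. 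The hypothesis makes it open as well, so $\rho(V(f))$ is clopen in $M$.

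The goal is now to show that every prime ideal is minimal, i.e.\ $X=M$. Then $X$ is Boolean and \ref{normalXcoHeytingCT} gives that $T$ is a $P$-space. Equivalently, it suffices to show that for every $f$ the set $V(f)$ is a union of full strands, i.e.\ $V(f)=\rho^{-1}(\rho(V(f)))$. Suppose instead that $\Dp\in M$ and $\Dq\supseteq \Dp$ with $f\in\Dq\setminus\Dp$. Then $\Dp\in\rho(V(f))$ but $\Dp\in D(f)$. The set $W=\rho(V(f))$ is a clopen subset of $M$. Since $\rho$ is a retraction onto a patch closed set, $W$ pulls back to a clopen subset $\rho^{-1}(W)$ of $X$, and any clopen subset of $X$ has the form $V(e)$ for an idempotent $e$ (\ref{QCOPStoneLatticeApplyStone}).

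On $\rho^{-1}(W)=V(e)$ I replace $f$ by $g=f^2+e^2$. Then $V(g)=V(f)\cap V(e)=V(f)$, so ${\downarrow}V(g)\cap M=W$ still, but now every minimal prime in $W$ contains $e$. I want to derive a contradiction from $\Dp\in W\cap D(f)$. A more direct route is preferable: clopenness of $\rho(V(f))$ for all $f$ says that $\overline{\{\Dp\}}$ meets $V(f)$ for all $\Dp$ in a clopen set of $M$. Since every clopen set of $M$ is of the form $V(e)\cap M$, this yields an idempotent $e$ such that, for each minimal prime $\Dp$, the element $f$ lies in some prime above $\Dp$ exactly when $e\in\Dp$. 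Restricting to $\Dp=\Dm_x\cap$(minimal primes below $\Dm_x$) for points $x\in T$: if $e(x)=0$ then all minimal primes below $\Dm_x$ contain $e$. Using $Z(e)$ clopen in $T$, I compare $Z(f)$ with $Z(e)$. Every $x\in Z(f)$ has $e(x)=0$, so $Z(f)\subseteq Z(e)$. Conversely, if $e(x)=0$ for some $x\notin \operatorname{int}Z(f)$, choose a minimal prime $\Dp\subseteq\Dm_x$ not containing $f$; then $f$ lies in some prime above $\Dp$, namely $\Dm_x$ when $f(x)=0$. The aim is to conclude $Z(f)$ is open, using that a minimal prime contains $f$ iff it contains $e$ would force $\operatorname{int}Z(f)\supseteq\ldots$, giving $Z(e)=Z(f)$ up to interiors. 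The conclusion to reach is that every zero set is open, i.e.\ $T$ is a $P$-space.

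\emph{Main obstacle.} The difficult step is exactly this last translation: from the idempotent $e$, show $Z(f)=Z(e)$ via the behaviour of minimal primes at points of $\partial Z(f)$. I would handle it by using that in an $F$-space the minimal primes below $\Dm_x$ are $\{g: Z(g)\text{ is a neighbourhood of }x \text{ in some ultrafilter sense}\}$ (Gillman--Jerison 14.25). A boundary point $x$ of $Z(f)$ then has a minimal prime below $\Dm_x$ avoiding $f$. This contradicts $\Dm_x\in V(f)$ unless $e(x)\ne 0$, and $e(x)\neq 0$ is impossible since $e$ vanishes on $Z(f)$ and is locally constant, hence vanishes on its closure.
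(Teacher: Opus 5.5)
\textbf{There is a genuine gap in the last step of the converse.} Your forward direction is fine, and so is the converse up to the idempotent. With $\rho\colon X\to M$ the retraction onto the bottoms of the strands, $W=\rho(V(f))={\downarrow}V(f)\cap M$ is clopen in $M$, $\rho^{-1}(W)=V(e)$ for an idempotent $e$, and $Z(f)\subseteq Z(e)$.

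Your closing argument rests on ``a minimal prime contains $f$ iff it contains $e$''. What you have actually established is weaker: for $\Dp\in M$ you have $e\in\Dp$ iff $\Dp\in W$ iff \emph{some prime above} $\Dp$ contains $f$. Upgrading this to $f\in\Dp$ means $V(f)\cap M={\downarrow}V(f)\cap M$ for all $f$. That is exactly your own reduction that $V(f)$ is a union of full strands, so it is circular.

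For the same reason the boundary-point argument produces no contradiction. Even granting that for $x\in\partial Z(f)$ the minimal prime $\rho(\Dm_x)$ omits $f$, this is perfectly consistent with $f\in\Dm_x$, with $\Dm_x\in V(f)$, and with $e(x)=0$. Moreover $e(x)=0$ holds anyway, since $x\in Z(f)\subseteq Z(e)$. So as written the converse is not proved.

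\textbf{The missing idea is to compare $e$ and $f$ at the top of the strands instead of at the bottom.}
\begin{enumerate}[{\rm (1)}]
\item Since $X$ is stranded and $\Dm_x$ is maximal, the strand through $\Dm_x$ is $\{\Dq\mid\Dq\subseteq\Dm_x\}$.
\item Suppose $e(x)=0$. Then $\Dm_x\in V(e)=\rho^{-1}(\rho(V(f)))$.
\item Hence some $\Dq\in V(f)$ lies in that strand, so $f\in\Dq\subseteq\Dm_x$, i.e.\ $f(x)=0$.
\item Together with $Z(f)\subseteq Z(e)$ this gives $Z(f)=Z(e)$. This set is clopen because $e$ takes only the values $0$ and $1$.
\item So every zero set of $T$ is open, i.e.\ every cozero set is closed, and $T$ is a $P$-space.
\end{enumerate}

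With this step your proof is complete, and it is in substance the paper's argument. There, the clopen set ${\downarrow}V(f)\cap X^{\min}$ is carried along the strands by the homeomorphism $r_0\colon X^{\min}\to X^{\max}$ to $V(f)\cap X^{\max}$. Hence ${\downarrow}V(f)=r^{-1}(V(f)\cap X^{\max})$ is constructible, and Theorem \ref{normalXcoHeyting} yields that $X$ is Boolean.
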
 \begin{proof} If $T$ is a $P$-space, then $T$ is basically disconnected (see \ref{ContF1}) and ${\downarrow}V(f)=V(f)$ is clopen. For the converse, we work in $X=\Spec C(T)$ and use \ref{normalXcoHeyting} to show that ${\downarrow}V(f)$ is constructible for all $f\in C(T)$: By \ref{CofTSemiHeyting} we know that $T$ is an $F$-space and $X^{\min}$ is patch closed. It follows that the restriction $r_0:X^{\min}\lra X^{\max}$ of the natural retraction $r:X\lra X^{\max}$ is a bijective continuous map between compact spaces. Hence $r_0$ is a homeomorphism. \par Since ${\downarrow}V(f)$ is closed, the assumption implies that ${\downarrow}V(f)\cap X^{\min}$ is clopen in $X^{\min}$. Consequently, the set $V(f)\cap X^{\max}=r_0({\downarrow}V(f)\cap X^{\min})$ is clopen in $X^{\max}$. Since $r$ is a spectral map $X\lra X^{\max}$ (see \DST{XmaxBooleanNormal} and use that $X^{\max}$ is Boolean), this shows that ${\downarrow}V(f)=r^{-1}(V(f)\cap X^{\max})$ is constructible as well. \end{proof} \begin{FACT}{:Pseudocomplementation conditions for some special spaces}{ExamplesOverview} \noindent Let $S$ be the space exhibited in \cite[Example 5.8]{HenJer1965}, where it is shown that $S$ is cozero complemented but $\Coz(S)$ is not pseudocomplemented. By \cite[Example 5.8]{HenJer1965}, the space $\beta \N\setminus \N$ does not have compact minimal spectrum, in fact no point of $(\Spec C(\beta \N\setminus \N))^{\min}$ has a compact neighborhood. Since $\zSpec C(\beta \N\setminus \N)$ is naturally homeomorphic to a closed and constructible subset of $\zSpec C(\beta \N)$\footnote{By the Tietze extension theorem, the restriction map $C(\beta \N)\lra C(\beta \N\setminus \N)$ is surjective and one verifies without difficulty that its kernel is the $z$-ideal $I$ generated by the unique extension of the function $\frac{1}{n}$ from $\N$ to $\beta\N$; hence $\zSpec C(\beta\N\setminus \N)\cong V(I)\cap \zSpec C(\beta\N)$.}, \ref{HeytingEasyII} implies that the latter is not an Esakia space. \par Using this information we present a table with an overview of the pseudocomplementation conditions we have considered. As above, $X=\Spec(C(T))$ for a completely regular space $T$ and $Z=\zSpec(C(T))$. The conditions in rows with an asterisk in the second column are equivalent for all choices of $T$ (not only those shown in the last four columns). Similarly for the conditions in rows with a dagger. The numbers behind an entry give a reference that implies the answer. Note that if $X$ is PC, then also $Z$ is PC as follows from \DST{semiHeytingSubspaces}, which implies that $Z$ is even a PC-subspace of $X$. \par \medskip \begin{center} \providecommand\tableH[1]{$#1$} \providecommand\tableX[1]{#1} \providecommand\tableZ[1]{{\color{black}#1}} \begin{tabular}{|c|c|c|c|c|c|} \hline {\tableH{}} & {\tableH{\Leftrightarrow}} & {\tableH{\R^n}} & {\tableH{\beta \N}} & {\tableH{\beta \N\setminus \N}} & {\tableH{S\text{ from }\ref{ExamplesOverview}}} \\ \hline \tableX{$X$ Stone} & \tableX{$*$} & \tableX{N, \ref{CofTSemiHeyting}} & \tableX{Y, \ref{CofTSemiHeyting}} & \tableX{N, \ref{CofTSemiHeyting}} & \tableX{N, \ref{CofTSemiHeyting}} \\ \hline \begin{tabular}{c} \tableZ{$Z$ Stone, i.e.} \\ \tableZ{$T$ basically} \\ \tableZ{disconnected} \end{tabular} & \tableZ{$*$}    & \tableZ{N, \ref{CofTSemiHeyting}} & \tableZ{Y, \ref{CofTSemiHeyting}} & \tableZ{N, \ref{CofTSemiHeyting}} & \tableZ{N, \ref{CofTSemiHeyting}}    \\ \hline \tableX{$X$ Esakia}    & \tableX{}     & \tableX{N, \ref{CofTSemiHeyting}} & \tableX{\textbf{?}} & \tableX{N, \ref{ExamplesOverview}} & \tableX{N, \ref{ExamplesOverview}}     \\ \hline \tableZ{$Z$ Esakia}    & \tableZ{}     & \tableZ{Y, \ref{pcZ}} & \tableZ{N, \ref{ExamplesOverview}} & \tableZ{N, \ref{ExamplesOverview}} & \tableZ{N, \ref{ExamplesOverview}}     \\ \hline \tableX{$X$ PC}       & \tableX{$*$}    & \tableX{N, \ref{CofXnotHeytingConvSeqNormal}} & \tableX{Y, \ref{SpecCbetaXSemiH}} & \tableX{N, \ref{CofTSemiHeyting}} & \tableX{N, \ref{CofTSemiHeyting}}    \\ \hline \tableZ{$Z$ PC}       & \tableZ{}     & \tableZ{Y, \ref{pcZ}} & \tableZ{Y, \ref{SpecCbetaXSemiH}} & \tableZ{N, \ref{CofTSemiHeyting}} & \tableZ{N, \ref{CofTSemiHeyting}}     \\ \hline \tableX{$X^{\min}$ compact} & \tableX{$\dagger$} & \tableX{Y, \ref{RemarksCozeroComplemented}} & \tableX{Y, \ref{ContF2}} & \tableX{N, \ref{ExamplesOverview}} & \tableX{Y, \ref{ExamplesOverview}} \\ \hline \tableZ{$Z^{\min}$ compact} & \tableZ{$\dagger$} & \tableZ{Y, \ref{RemarksCozeroComplemented}} & \tableZ{Y, \ref{ContF2}} & \tableZ{N, \ref{ExamplesOverview}} & \tableZ{Y, \ref{ExamplesOverview}} \\ \hline \end{tabular} \end{center} \par \medskip\noindent Also recall from \ref{normalXcoHeytingCT} that for the inverse spaces of $X$ and $Z$ all conditions are equivalent to $T$ being a P-space. \end{FACT}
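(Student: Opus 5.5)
The statement to be justified is the table, so the plan is to verify it row by row. I will first establish the structural equivalences (asterisk and dagger), then handle each column using only \ref{CofTSemiHeyting}, \ref{RemarksCozeroComplemented}, \ref{pcZ}, \ref{HeytingEasyII} and the cited examples of \cite{HenJer1965}. Two general implications will be used throughout.
\begin{itemize}
\item Esakia implies PC, and Stone implies PC (\ref{pseudoVariantsDiagram}).
\item $X$ PC implies $Z$ PC, because $Z$ is a PC-subspace of $X$ by \DST{semiHeytingSubspaces}.
\end{itemize}

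For the asterisk rows I apply \ref{CofTSemiHeyting} directly. It gives ``$X$ Stone'' $\iff$ ``$X$ PC'' $\iff$ ``$T$ basically disconnected''. By definition, ``$T$ basically disconnected'' says that $\Coz(T)\cong\qcop(Z)$ is a Stone algebra. For the dagger rows, \ref{ContF1}\ref{ContFz} shows that every minimal prime of $C(T)$ is a z-ideal, so $X^{\min}=Z^{\min}$ as subspaces and the two conditions coincide. By \ref{RemarksCozeroComplemented}, both are equivalent to $T$ being cozero complemented.

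\emph{Column $\R^n$.} $\R^n$ is connected and not discrete, so it is not basically disconnected, since such spaces are zero-dimensional. Hence the Stone and PC rows for $X$ and the Stone row for $Z$ are N, and $X$ Esakia is N because Esakia implies PC. \ref{CofXnotHeytingConvSeqNormal} gives ``$X$ PC'' = N independently. Since $\R^n$ is perfectly normal, $\Coz(\R^n)=\CO(\R^n)$ is a frame, hence a Heyting algebra, so $Z$ is Esakia and therefore PC (alternatively via \ref{pcZ}). $X^{\min}$ is compact because perfectly normal spaces are cozero complemented: take $V=T\setminus\overline{U}$.

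\emph{Column $\beta\N$.} $\beta\N$ is extremally disconnected, hence basically disconnected. So all Stone and PC rows are Y, and $X^{\min}$ is compact by \ref{pscompBasic}. ``$Z$ Esakia'' fails by \ref{HeytingEasyII}\ref{HeytingEasyIISemiOnClosed}: $\zSpec C(\beta\N\setminus\N)$ is a closed constructible subspace of $\zSpec C(\beta\N)$, and its minimal spectrum is not compact \cite[Example 5.8]{HenJer1965}, so it is not PC by \ref{pscompBasic}. The entry ``$X$ Esakia'' stays open.

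\emph{Column $\beta\N\setminus\N$.} Its minimal spectrum is not compact \cite{HenJer1965}, so both dagger rows are N. By \ref{pscompBasic}, $X$ and $Z$ are then not PC, and so every other row is N as well.

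\emph{Column $S$.} $S$ is cozero complemented, so the dagger rows are Y. $\Coz(S)$ is not pseudocomplemented, so $Z$ is not PC, and hence $Z$ is neither Stone nor Esakia. By the second general implication, $X$ is not PC either, so $X$ is neither Stone nor Esakia.

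The main obstacle is the ``$Z$ Esakia'' entry for $\beta\N$. It needs the closed constructible embedding from the footnote, and the check that the kernel of the restriction map is exactly a z-ideal with $V(I)$ constructible. I would verify this using Tietze extension and the fact that $I$ is generated, as a z-ideal, by a single function.
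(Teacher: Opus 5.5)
Your proposal is correct and follows essentially the paper's own justification. You verify the table row by row from \ref{CofTSemiHeyting}, \ref{RemarksCozeroComplemented} and \ref{pscompBasic}, using $X^{\min}=Z^{\min}$, and you settle the $\beta\N$ ``$Z$ Esakia'' entry with the same closed constructible copy of $\zSpec C(\beta\N\setminus\N)$ combined with \ref{HeytingEasyII}. The places where you cite different facts for individual entries are all valid: that $\Coz(\R^n)=\CO(\R^n)$ is a frame and hence Heyting, which gives ``$Z$ Esakia'' more directly than \ref{pcZ}; that every N for $\beta\N\setminus\N$ follows from the non-compact minimal spectrum; and that for $S$ the implication ``$X$ PC $\Rightarrow$ $Z$ PC'' transfers the failure from $Z$ to $X$.
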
 \smallskip\noindent \textbf{Open problem.} We do not have a characterization of those completely regular spaces $T$ for which $\Spec (C(T))$ is an Esakia space. In \ref{semiHeytingMetric} we have seen that metric spaces have this property only if they are discrete, but we have no example of an infinite compact Hausdorff space $T$ such that $\Spec (C(T))$ is an Esakia space. In particular, we do not know whether $\Spec (C(\beta \N))$ is Esakia. \par We also do not have a general characterization of when $\zSpec (C(T))$ is Esakia (equivalently: $\Coz(T)$ is a Heyting algebra). This does happen frequently, e.g. when all open sets of $T$ are cozero sets, like in perfectly normal spaces, but we do not have a full characterization.  \par \end{document}